\newtheorem{definition}{Definition}
\newtheorem{theorem}{Theorem}
\newtheorem{proposition}{Proposition}
\newtheorem{lemma}{Lemma}
\newtheorem{remark}{Remark}
\newtheorem{corollary}{Corollary}
\newtheorem{openquestion}{Open question}
\newcommand{\R}{\mathbb R}
\newcommand{\E}{\mathbb E}
\newcommand{\Sp}{\mathbb S}
\newcommand{\tr}{\mathrm{tr}}
\newcommand{\Log}{\mathrm{Log}}
\newcommand{\CAT}{\mathrm{CAT}}
\newcommand{\per}{\mathrm{per}}
\newcommand{\var}{\mathrm{var}}
\newcommand*\diff{\mathop{}\!\mathrm{d}}
\newcommand*\dist{\mathop{}\!\mathrm{d}}
\newcommand{\DS}{\displaystyle}
\begin{document}

\begin{frontmatter}

\title{Finite sample bounds for barycenter estimation in geodesic spaces}
\runtitle{Barycenter estimation}

\author{Victor-Emmanuel Brunel \thanks{CREST-ENSAE, victor.emmanuel.brunel@ensae.fr} and Jordan Serres \thanks{LPSM, Sorbonne, serres@lpsm.paris}}

\runauthor{V.-E. Brunel and J. Serres}

\setattribute{abstractname}{skip} {{\bf Abstract:} } 
\begin{abstract}
We study the problem of estimating the barycenter of a distribution given i.i.d. data in a geodesic space. Assuming an upper curvature bound in Alexandrov's sense and a support condition ensuring the strong geodesic convexity of the barycenter problem, we establish finite-sample error bounds in expectation and with high probability. Our results generalize Hoeffding- and Bernstein-type concentration inequalities from Euclidean to geodesic spaces. Building on these concentration inequalities, we derive statistical guarantees for two efficient algorithms for the computation of barycenters. 
\end{abstract}

\begin{keyword}
Barycenters, Concentration inequalities, Curvature, Geodesic spaces
\end{keyword}

\end{frontmatter}

\section{Introduction}

Statistics and machine learning are more and more confronted with data that lie in non-linear spaces. For instance, in spatial statistics (e.g., directional data), computational tomography (e.g., data in quotient spaces such as in shape statistics, collected up to rigid transformations), economics (e.g., optimal transport, where data are discrete measures), etc. Moreover, data that are encoded as very high dimensional vectors may have a much smaller intrinsic dimension, for instance, if they are lying on small dimensional submanifolds of the Euclidean space: In that case, leveraging the possibly non-linear geometry of the data can be a powerful tool in order to significantly reduce the dimensionality of the problem at hand, this phenomenon is understood as the \textit{manifold hypothesis}, which is extensively studied in the literature, see e.g. \citep{fefferman2016testing}. Even though more and more algorithms have been developed to work with such data \citep{LimPalfia14,ohtapalfia,zhang2016first,zhang2018towards}, there is still very little theoretical work for uncertainty quantification, especially in non-asymptotic regimes, which are pervasive in machine learning. In this work, we prove finite sample, high probability error bounds for barycenters of data points, which are the most natural extension of linear averaging to the context of non-linear geometries.


Let $(M,\dist)$ be a metric space. Given $x_1,\ldots,x_n\in M$ ($n\geq 1$), a barycenter of $x_1,\ldots,x_n$ is any minimizer of the function
\begin{equation} \label{eq:Frechet_emp}
    \frac{1}{n}\sum_{i=1}^{n}\dist(x_i,b)^2, \quad b\in M.
\end{equation} 
One can easily check that if $(M,\dist)$ is a Euclidean or Hilbert space, the minimizer is unique and it is given by the average of $x_1,\ldots,x_n$. More generally, given a probability distribution $\mu$ with two moments on $(M,\dist)$, one can define the barycenter of $\mu$ as the unique minimizer of the function 
\begin{equation} \label{eq:Frechet_pop}
    \int_M \dist(x,b)^2\diff\mu(x), \quad b\in M.
\end{equation}
Here, we say that $\mu$ has two moments if and only if the function $\dist(\cdot,b_0)^2$ is integrable with respect to $\mu$ for some $b_0\in M$ (and hence, by the triangle inequality, for all such $b_0$). Note that, in order to define a barycenter of $\mu$, it is in fact enough to assume that $\mu$ only has one finite moment, by subtracting $\dist(x,b_0)^2$ inside the integral of \eqref{eq:Frechet_pop}, for any fixed $b_0$ (one easily checks that the set of minimizers does not depend on the choice of $b_0$). However, in this work, we will always assume the existence of at least two moments, in order to obtain relevant statistical error bounds. 

The main question that we are concerned with is the following. Given a probability distribution $\mu$ on $(M,\dist)$ and $n$ independent, identically distributed (i.i.d) random points $X_1,\ldots,X_n$ with distribution $\mu$ ($n\geq 1$), how likely is a barycenter $\hat b_n$ of $X_1,\ldots,X_n$ (we call $\hat b_n$ an \textit{empirical barycenter}) to be far from a barycenter $b^*$ of $\mu$? In other words, we aim at bounding the statistical error $\dist(\hat b_n,b^*)$. Our focus will be on deriving high probability bounds that hold for any sample size $n\geq 1$. Moreover, our bounds will be dimension-free, i.e., they will not require the space $(M,\dist)$ to have finite dimension in any sense (e.g., doubling dimension).

Barycenters were initially introduced in statistics by \citep{Frechet48} in the 1940's, and later by \citep{karcher1977Riemannian}, where they were better known as Fréchet means, or Karcher means. They were popularized in the field of shape statistics \citep{kendall2009shape} and optimal transport \citep{agueh2011barycenters,cuturi2014fast,le2017existence,claici2018stochastic,kroshnin2019complexity,altschuler2021wasserstein,altschuler2022wasserstein} but also find applications in broader machine learning problems \citep{hu2023minimizing}.
The existence and uniqueness of barycenters are challenging problems in general \citep{afsari2011riemannian, Yokota16,Yokota18}. Asymptotic theory is well understood for empirical barycenters in various setups, particularly laws of large numbers \citep{ziezold1977expected} and central limit theorems in Riemannian manifolds -- a smooth structure on $M$ is a natural assumption in order to derive central limit theorems \citep{bhattacharya2003large,bhattacharya2005large,bhattacharya2017omnibus,eltzner2019smeary,eltzner2019stability}. Only very few non-asymptotic results have been proven so far, most of which hold under fairly technical conditions. \citep{sturm03} proposes an alternative definition of barycenters, which we will also review below, and obtains a bound on the expected statistical error when $(M,\dist)$ is \textit{non positively curved} (NPC) \citep[Theorem 4.7]{sturm03}. Namely, the bound reads as follows:
\begin{equation} \label{eq:Sturm}
    \E[\dist(\tilde b_n,b^*)]\leq \frac{\sigma^2}{n}
\end{equation}
where $\tilde b_n$ is the $n$-th iterated barycenter of $X_1,\ldots,X_n$ (we give its precise definition in Section~\ref{sec:barydef}) and $\sigma^2$ is the total variance of $\mu$, i.e., $\sigma^2=\E[d(X_1,b^*)^2]$. In Hilbert spaces, $\sigma^2$ coincides with the trace of the covariance operator. In particular, \eqref{eq:Sturm} is sharp in the sense that it is in fact an equality when $(M,\dist)$ is a Hilbert space.
Much later, \citep[Corollary 11]{fastconv} provides the same inequality for $\hat b_n$, under the extra constraint that $(M,\dist )$ has curvature bounded from below. At a high level, this means that the space $(M,\dist )$ does not exhibit branching (i.e., a geodesic cannot split, unlike, for instance, in metric trees) and this ensures some regularity of the tangent cones of $M$, allowing to perform local linearizations. They also extended their result to spaces $(M,\dist )$ that may have positive curvature, so long as they satisfy a so-called hugging condition. However, except for NPC spaces, there is no explicit example that satisfies such a condition. 

In a recent work, \citep{escande2023concentration} proves that the same upper bound as \citep[Corollary 11]{fastconv}, up to an additional multiplicative factor, holds in any NPC space, dropping the curvature lower bound assumption, by elegantly leveraging the quadruple inequality, which characterizes NPC spaces \citep[Corollary 3]{berg2008quasilinearization}.
Several non-asymptotic, high probability bounds have also been established for empirical and iterated barycenters. \citep[Eq. (3.10)]{fastconv} proposes a definition of sub-Gaussian random variables, closely related to the one we give below. Under the hugging condition mentioned above, they prove (Theorem 12) a nearly sub-Gaussian tail bound the empirical barycenter of i.i.d sub-Gaussian random variables, with a residual term that decays exponentially fast with $n$. \citep{convrate} obtains concentration inequalities for the empirical barycenter $\hat b_n$ of i.i.d, bounded random variables with non-parametric rates, under some metric entropy conditions on $(M,\dist )$, some of which being similar in spirit to requiring $M$ to have finite dimension. 
\citep{Funano10} establishes a high probability bound in NPC spaces for the iterated barycenter $\tilde b_n$ of i.i.d bounded random variables, by assuming that $M$ is either a metric tree or a finite dimensional Riemannian manifold. In the latter case, the bound derived by \citep{Funano10} depends on the dimension of $M$ and, hence, does not extend to infinite dimensional spaces.  

\subsection{Our contributions}

We prove error bounds in expectation and with high probability for barycenter estimation in geodesic spaces that have a curvature upper bound. Contrary to certain works in the previous literature, we do not assume any curvature lower bounds, which allows us to cover broader classes of spaces such as branching spaces: Metric trees, BHV space of phylogenetic trees \citep{billera2001geometry}, and spaces of non-constant dimension such as stratified spaces. 

The barycenter estimators that we treat are: \textit{Empirical barycenters}, defined as barycenters of the data points, and \textit{iterated barycenters}, which are the output of a stochastic proximal descent algorithm with appropriate step-sizes. While first order stochastic algorithms (namely, stochastic gradient descent) have been studied extensively in Riemmanian manifolds \citep{bonnabel2013stochastic,zhang2016first} with non-asymptotic guarantees holding in expectation, our results provide new bounds for iterated barycenters: Dimension-free, high probability bounds in spaces of non-positive curvature, and bounds in expectation that are independent of the size of the domain of the data, unlike the general bounds given in \citep{zhang2016first}.

Our bounds are always dimension free, in the sense that they do not require the space to have finite dimension (in any sense, e.g., Hausdorff dimension, or doubling dimension): They involve features of the data distribution (e.g., sub-Gaussian norm and total variance) but not the dimension of the underlying space. In particular, our high probability bounds extend the classical Hoeffding's and Bernstein's inequalities to a non-linear setup.

This work is an extension of our conference paper \citep{brunel2024concentration}, which only applied to NPC spaces. Perhaps surprisingly, extending the results of \citep{brunel2024concentration} to geodesic spaces with curvature bounded above by any $\kappa\in\R$ required significant effort and has opened new questions, which we pose as \textit{open questions} in this paper. 

\subsection{Outline}

Our work is organized as follows. In Section~\ref{sec:CATspaces}, we first give a brief introduction to geodesic metric spaces with curvature upper bounds and to the notion of geodesic convexity in such spaces. Then, we define barycenter estimators, which we analyze from a geometric point of view. In Section~\ref{sec:Laplace}, we develop the main tools that allow us to obtain measure concentration of functions of random points in metric spaces. Finally, our main statistical results on barycenter estimation are stated in Section~\ref{sec:main}. Some proofs are deferred to the appendix.

\subsection{Notation and general definitions}

Let $(M,\dist)$ be a metric space. For all $x_0\in M$ and $r\geq 0$, we denote by $B(x_0,r)$ the closed ball centered at $x_0$ and with radius $r$. The diameter of a bounded subset $B$ of $M$ is denoted by $\textrm{diam}(B)$.

For any $x,y\in M$, we call a (constant speed) geodesic from $x$ and $y$ any path $\gamma:[0,1]\to M$ satisfying $\gamma(0)=x, \gamma(1)=y$ and $\dist(\gamma(s),\gamma(t))=|s-t|\dist(x,y)$ for all $s,t\in[0,1]$. The set of geodesics from $x$ to $y$ is denoted by $\Gamma_{x,y}$ and we say that $(M,\dist)$ is a geodesic space if $\Gamma_{x,y}$ is non-empty for every pair of points $x,y\in M$. Note that $\Gamma_{x,y}$ might not be a singleton, for instance when $M$ is a Euclidean sphere equipped with its geodesic distance and $x$ and $y$ are antipodal. We call a geodesic ray any map $\eta:[0,\infty)\to M$ satisfying $\dist(\eta(s),\eta(t))=|s-t|$ for all $s,t\geq 0$ (note that geodesic rays may not exist, e.g., if $M$ is bounded). 

A random variable $X$ (resp. a probability measure $\mu$) in $(M,\dist)$ is said to have $k$ moments, $k\geq 1$, if $\E[\dist(X,x_0)^k]<\infty$ (resp. $\int_M \dist(x,x_0)^k\diff\mu(x) <\infty$) for some $x_0\in M$. By the triangle inequality, if this holds for some $x_0\in M$, then it must hold for all such $x_0\in M$. If $X$ has two moments, we define its total variance as $\inf_{x\in M} \E[\dist(X,x)^2]$.

For a random variable $X$ (resp. a probability measure $\mu$) in $(M,\dist)$ with two moments, a barycenter of $X$ (resp. $\mu$) is any minimizer $b\in M$ of $\E[\dist(b,X)^2]$ (resp. $\int_\mu \dist(b,x)^2\diff\mu(x)$). As mentioned above, one could define barycenters of random variables or distributions with only one moment, but in this work, we will always assume the existence of at least two moments. The map $b\in M\mapsto \E[\dist(b,X)^2]$ is called the \textit{Fréchet function} of $X$, or of the distribution of $X$. For instance, if $(M,\dist)$ is a geodesic space, one can verify that for all pairs $x,y\in M$ and for all $t\in [0,1]$, any point of the form $\gamma(t), \gamma\in\Gamma_{x,y}$, is a barycenter of $(1-t)\delta_x+t\delta_y$ (aka weighted barycenter of $x$ and $y$, with respective weights $1-t$ and $t$). When $t=1/2$, we simply obtain a midpoint of $x$ and $y$.

For any $x\in M$, we denote by $\delta_{x}$ the Dirac measure at $x$, i.e., $\delta_x(A)=\mathds 1_{x\in A}$ for all Borel sets $A\subseteq M$.

Many constants are used throughout the paper, they are summarized in Table \ref{tab:constants}.

\begin{table}[t]
\centering
\caption{Summary of the main constants used throughout the paper.}
\label{tab:constants}
\footnotesize
\begin{tabular}{@{}lp{4.3cm}lll@{}}
\toprule
Symbol & Role & $\kappa\le 0$ & $\kappa>0$ & Source \\
\midrule
$D_\kappa$ & Diameter of the model space $M_\kappa$ & $\infty$ & $\pi/\sqrt\kappa$ & Sec.~\ref{sec:CATspaces} \\[4pt]
$\alpha(\varepsilon,\kappa)$ & Strong-convexity of $\dist(\cdot,x_0)^2$ & $2$ & $(\pi-2\sqrt\kappa\,\varepsilon)\tan(\varepsilon\sqrt\kappa)$ & Lem.~\ref{lem:d2strconvex} \\[6pt]
$L$ & $\hat B_n$ is $L/n$-Lipschitz on $C^n$ & $1$ &  $\frac{\pi}{\alpha(\varepsilon,\kappa)}$ & Thm.~\ref{thm:Lip_Bary} \\[4pt]
$A$ & $\E[\dist(\hat b_n,b^*)^2]\le A\,\sigma^2/n$ (i.i.d.) & $2$ & $\dfrac{32}{\varepsilon^{1/4}\kappa^{1/8}\,\alpha(\varepsilon,\kappa)}$ & Thm.~\ref{thm:llnempiricalbar} \\[8pt]
$\tilde A$ & $\E[\dist(\hat b_n,b^*)^2]\le \tilde A\,\bar\sigma_n^2/n$ (heteroskedastic) & $2$ & $\dfrac{{64}\sqrt2}{\varepsilon^{1/4}\kappa^{1/8}\,\alpha(\varepsilon,\kappa)}$ & Thm.~\ref{thm:hetero1} \\[8pt]
$t_k$ & Step sizes of the iterated barycenter $\tilde b_n$ & $1/k$ & $\dfrac{2}{\alpha(\varepsilon,\kappa)\,k+2}$ & Thm.~\ref{thm:llnvariance} \\
\bottomrule
\end{tabular}
\end{table}

\section{CAT spaces and convex domains} \label{sec:CATspaces}

\subsection{Model spaces and curvature bounds}

Here, we only briefly recall the definition of CAT spaces, i.e., metric spaces with global curvature upper bounds in Alexandrov's sense. For more details, we refer the reader to \citep{burago2022course,Alexandrovgeom} and \citep{bridson2013metric}.
First, we recall the definition of model spaces of constant curvature. Fix $\kappa\in\R$.
\begin{itemize}
    \item \underline{$\kappa=0$:} Euclidean plane. Set $M_0=\R^2$ equipped with its Euclidean metric. This is a geodesic space where geodesics are unique and given by line segments. 

    \item \underline{$\kappa>0$:} Sphere. Set $M_\kappa=\frac{1}{\sqrt\kappa}\Sp^2$: This is the $2$-dimensional Euclidean sphere, embedded in $\R^3$, with center $0$ and radius $1/\sqrt\kappa$, equipped with the arc length metric defined as $\dist_\kappa(x,y)=\frac{1}{\sqrt\kappa}\arccos(\kappa x^\top y)$, for all $x,y\in M_\kappa$. This is a geodesic space where the geodesics are unique except for antipodal points, and given by arcs of great circles. Here, a great circle is the intersection of the sphere with any plane going through the origin in $\R^3$. 

    \item \underline{$\kappa<0$:} Hyperbolic space. Set $M_\kappa=\frac{1}{\sqrt{-\kappa}}\mathbb H^2$, where $\mathbb H^2=\{(x_1,x_2,x_3)\in\R^3:x_3>0, x_1^2+x_2^2-x_3^2=-1\}$. The metric is given by $\dist_\kappa(x,y)=\frac{1}{\sqrt{-\kappa}}\textrm{arccosh}(-\kappa\langle x,y\rangle)$, for all $x,y\in M_\kappa$, where $\langle x,y\rangle=x_1y_1+x_2y_2-x_3y_3$. This is a geodesic space where geodesics are always unique and are given by the intersections of $M_\kappa$ with planes going through the origin in $\R^3$. 
\end{itemize}

Let $\DS D_\kappa=\begin{cases} \infty \mbox{ if } \kappa\leq 0 \\ \frac{\pi}{\sqrt{\kappa}} \mbox{ if } \kappa>0\end{cases}$ be the diameter of the model space $M_\kappa$. A fundamental property of $M_\kappa$ is that between any two points $x,y\in M_\kappa$ with $\dist_\kappa(x,y)<D_\kappa$, there is a unique geodesic, i.e., $\Gamma_{x,y}$ is always a singleton unless $\kappa>0$ and $x$ and $y$ are antipodal points on the sphere $M_\kappa$.
The notion of curvature (lower or upper) bounds for a geodesic metric space $(M,\dist )$ is defined by comparing the triangles in $M$ with their counterparts in model spaces. 

\begin{definition}
	A (geodesic) triangle in $M$ is a set of three points in $M$ (the vertices) together with three geodesics connecting them (the sides). 
\end{definition}

Given three points $x,y,z\in M$, we abusively denote by $\Delta(x,y,z)$ a triangle with vertices $x,y,z$, with no mention to which geodesics are chosen for the sides, which are not necessarily unique. The perimeter of a triangle $\Delta=\Delta(x,y,z)$ is defined as $\per(\Delta)=\dist(x,y)+\dist(y,z)+\dist(x,z)$. It does not depend on the choice of the sides. 

\begin{definition}
	Let $\kappa\in\R$ and $\Delta$ be a triangle in $M$ with $\per(\Delta)<2D_\kappa$. A comparison triangle for $\Delta$ in the model space $M_\kappa$ is a triangle $\bar\Delta\subseteq M_\kappa$ with same side lengths as $\Delta$, i.e., if $\Delta=\Delta(x,y,z)$, then $\bar\Delta=\Delta(\bar x,\bar y,\bar z)$ where $\bar x,\bar y,\bar z$ are points in $M_\kappa$ satisfying 
	$$\begin{cases} \dist(x,y)=\dist_\kappa(\bar x,\bar y) \\ \dist(y,z)=\dist_\kappa(\bar y,\bar z) \\ \dist(x,z)=\dist_\kappa(\bar x,\bar z). \end{cases}$$
\end{definition}

Note that in $M_\kappa$, any side of a triangle with perimeter less than $2D_\kappa$ must be of length less than $D_\kappa$, so the geodesics connecting the vertices are unique. Moreover, given any positive numbers $a,b,c$ with $a\leq b+c$, $b\leq a+c$, $c\leq a+b$ and $a+b+c<2D_\kappa$, there exists a unique triangle in $M_\kappa$ with side lengths given by $a,b$ and $c$, up to rigid transformations. Therefore, comparison triangles are always unique up to isometries of the model space $M_\kappa$.
We are now ready to define curvature bounds. Intuitively, we say that $(M,\dist)$ has global curvature bounded from above by $\kappa$ if all its triangles with perimeter smaller than $2D_\kappa$ are thinner than their comparison triangles in the model space $M_\kappa$. 

\begin{definition} \label{def:CAT}
	Let $(M,\dist)$ be a metric space and $\kappa\in\R$. 
 \begin{itemize}
    \item We say that $(M,\dist )$ has global curvature bounded from above by $\kappa$ if and only if for all triangles $\Delta\subseteq M$ with $\per(\Delta)<2D_\kappa$ and for all $x,y\in \Delta$, $\dist(x,y)\leq \dist_\kappa(\bar x,\bar y)$, where $\bar x$ and $\bar y$ are the points on a comparison triangle $\bar\Delta$ in $M_\kappa$ that correspond to $x$ and $y$ respectively. 
    \item We say that $(M,\dist)$ is a $\CAT(\kappa)$ space if it is a geodesic space, complete (in the topological sense) and has global curvature bounded from above by $\kappa$. 
    \item We say that $(M,\dist)$ is a $\CAT$ space if it is a $\CAT(\kappa)$ space for some $\kappa\in\R$.
 \end{itemize}
\end{definition}
 Let us mention two natural properties of $CAT$ spaces. First, for all $\kappa,\kappa'\in\R$ with $\kappa\leq \kappa'$, it holds that any $\CAT(\kappa)$ space is also a $\CAT(\kappa')$ space. Second, if $(M,d)$ is a $CAT(\kappa)$ space, then the $\rho$-dilation $(M,\rho\,d)$, $\rho>0$, is a $CAT(\kappa/\rho^2)$ space.
For instance, it is obvious that any Euclidean or Hilbert space is a $\CAT(0)$ space and that a Euclidean sphere with radius $r>0$ is a $\CAT(\kappa)$ space with $\kappa=1/r^2$. A Riemannian manifold that is simply connected and has sectional curvature uniformly bounded from above by $\kappa\in\R$ is a $\CAT(\kappa)$ space (see \citep{burago2022course}). Here is a list of more specific examples.

\begin{itemize}
    \item Any metric tree is a $\CAT(0)$ space. A metric tree is a complete metric space $(M,\dist)$ where for all $x,y,z\in M$, there exists some $w\in M$ with $\dist(x,y)=\dist(x,w)+\dist(w,y)$, $\dist(x,z)=\dist(x,w)+\dist(w,z)$ and $\dist(y,z)=\dist(y,w)+\dist(w,z)$. For instance, any acyclic graph with positive edge weights can be equipped with a metric that makes it a metric tree where the length of each edge coincides with its weight. Metric trees are simple enough non-Euclidean $\CAT(0)$ spaces that can provide intuition and/or counterexamples.

    \item The space $\mathcal S_d^{++}$ of $d\times d$ symmetric positive definite matrices can be equipped with several different metrics, making it (or portions of it) a $\CAT(\kappa)$ space for different values of $\kappa$. For instance, the Euclidean metric $\dist_1(A,B)=\|B-A\|_{\textsf{F}},\,\, A,B\in\mathcal S_d^{++}$, makes it a $\CAT(0)$ space (here, $\|\cdot\|_{\textsf{F}}$ is the Fröbenius norm). The metric $\dist_2(A,B)=\|\log(A^{-1/2}BA^{-1/2})\|_{\textsf{F}}$ also makes it $\CAT(0)$. In fact, it can be seen that this metric is inherited from a Riemannian structure and that midpoints, with respect to this metric, are given by geometric means. That is, given any $A,B\in \mathcal S_d^{++}$, there exists a unique geodesic from $A$ to $B$ and its midpoint is $A^{1/2}(A^{-1/2}BA^{-1/2})^{1/2}A^{1/2}$, the geometric mean of $A$ and $B$. See \citep{bhatia2006riemannian} for a more detailed account on operator geometric mean and this Riemannian structure. Finally, a third metric that we mention here is the Bures-Wasserstein metric, which is also inherited from a Riemannian structure. This metric comes from optimal transport and can be defined as follows. Given any $A,B\in\mathcal S_d^{++}$, define $\dist_3(A,B)$ as the Wasserstein $2$ distance between $\mathcal N_d(0,A)$ and $\mathcal N_d(0,B)$, the $d$-variate centered Gaussian distributions with respective covariance matrices $A$ and $B$. It can be shown that $\dist_3(A,B)=\min\{\|M-N\|_{\textsf{F}}:M,N\in \R^{d\times d}, MM^\top=A, NN^\top=B\}=\min_{U\in \mathcal O(d)} \|A^{1/2}-UB^{1/2}\|_{\textsf{F}}$, where $\mathcal O(d)$ is the set of $d\times d$ orthogonal matrices. Then, for all $\lambda>0$, the collection of all $A\in\mathcal S_d^{++}$ with all eigenvalues at least $\lambda$ is a $\CAT(\kappa)$ space with $\kappa=3/(2\lambda^2)$ \citep[Proposition 2]{massart2019curvature}. 
\end{itemize}
An important fact about $\CAT$ spaces is that geodesics between points that are close enough are always unique. We state this as a proposition, whose proof can be found in \citep[Section 9.8]{alexander2024alexandrov}.

\begin{proposition} \label{prop:uniquegeodesics}
    Let $(M,\dist)$ be a $\CAT(\kappa)$ space for some $\kappa\in\R$. Then, for all $x,y\in M$ with $\dist(x,y)<D_\kappa$, there is a unique geodesic from $x$ to $y$.
\end{proposition}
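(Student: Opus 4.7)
The plan is to assume two geodesics $\gamma_1,\gamma_2\in\Gamma_{x,y}$ with $\dist(x,y)<D_\kappa$ exist, and to show that they must agree on the dyadic rationals of $[0,1]$; by continuity of constant speed geodesics this forces $\gamma_1=\gamma_2$. The reduction will be entirely in terms of midpoint uniqueness, propagated by a scaling/iteration argument.

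First, I would recall the analogous fact in the model space $M_\kappa$ itself: for any two points $\bar x,\bar y\in M_\kappa$ with $\dist_\kappa(\bar x,\bar y)<D_\kappa$ there is a unique midpoint, namely the unique $\bar m$ with $\dist_\kappa(\bar x,\bar m)=\dist_\kappa(\bar m,\bar y)=\dist_\kappa(\bar x,\bar y)/2$. This is clear for $\kappa\le 0$ (geodesics in the Euclidean plane or the hyperbolic plane are always unique) and, for $\kappa>0$, follows from the fact that uniqueness of the geodesic on the sphere $M_\kappa$ fails only at antipodal points, which are at distance exactly $D_\kappa$.

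Next I would set $m_1=\gamma_1(1/2)$ and $m_2=\gamma_2(1/2)$ and form the triangle $\Delta=\Delta(x,m_1,y)$ whose sides are $\gamma_1|_{[0,1/2]}$, $\gamma_1|_{[1/2,1]}$, and the full geodesic $\gamma_2$ from $x$ to $y$. Its perimeter equals
\[
\tfrac{1}{2}\dist(x,y)+\tfrac{1}{2}\dist(x,y)+\dist(x,y)=2\dist(x,y)<2D_\kappa,
\]
so the $\CAT(\kappa)$ comparison applies. In a comparison triangle $\bar\Delta=\Delta(\bar x,\bar m_1,\bar y)\subseteq M_\kappa$ one has $\dist_\kappa(\bar x,\bar m_1)+\dist_\kappa(\bar m_1,\bar y)=\dist_\kappa(\bar x,\bar y)$, and since $\dist_\kappa(\bar x,\bar y)=\dist(x,y)<D_\kappa$, the comparison triangle is degenerate: $\bar m_1$ lies on the unique geodesic segment from $\bar x$ to $\bar y$ at its midpoint. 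The point $\bar m_2$ corresponding to $m_2$ on the side $\gamma_2$ is also the midpoint of $\bar x\bar y$ in $M_\kappa$, so $\bar m_1=\bar m_2$. The $\CAT(\kappa)$ inequality then yields
\[
\dist(m_1,m_2)\le \dist_\kappa(\bar m_1,\bar m_2)=0,
\]
that is $\gamma_1(1/2)=\gamma_2(1/2)$.

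To finish, I would iterate: since $\dist(x,m_1)=\dist(x,y)/2<D_\kappa$, the restrictions $\gamma_1|_{[0,1/2]}$ and $\gamma_2|_{[0,1/2]}$ are two geodesics from $x$ to the common midpoint, and the same argument (applied after the affine reparametrisation that turns each half into a unit-speed geodesic on $[0,1]$) shows they agree at $1/4$; similarly on $[1/2,1]$ they agree at $3/4$, and so on. An easy induction then yields $\gamma_1(t)=\gamma_2(t)$ for every dyadic rational $t\in[0,1]$, and continuity of $\gamma_1,\gamma_2$ (which is immediate from the defining isometric relation $\dist(\gamma_i(s),\gamma_i(t))=|s-t|\dist(x,y)$) extends this to all $t\in[0,1]$. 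The main subtlety worth flagging is the strict inequality $\dist(x,y)<D_\kappa$: it is exactly what is needed so that the comparison triangle has perimeter $<2D_\kappa$ and so that the midpoint in $M_\kappa$ is well defined and unique. Everything else is bookkeeping once the midpoint uniqueness step has been established.
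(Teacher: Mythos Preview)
Your argument is correct and is essentially the standard proof of geodesic uniqueness in $\CAT(\kappa)$ spaces via midpoint comparison and dyadic bisection. The paper does not give its own proof of this proposition but merely cites \cite[Section~9.8]{alexander2024alexandrov}; the argument found there follows the same midpoint-comparison idea you outline.
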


The remaining definitions and properties presented in this section will only be useful for the proof of Theorem~\ref{thm:Lip_Bary} below, which is central in this work.

\begin{definition}[Alexandrov angles]
    Let $(M,\dist)$ be a geodesic space and $x,y,z\in M$ with $x\neq z, y\neq z$. Let $\gamma\in\Gamma_{z,x}$ and $\gamma'\in\Gamma_{z,y}$. For $t,t'\in[0,1]$, let $\bar x_t,\bar y_{t'}\in\R^2$ be the vertices of a comparison triangle $\Delta(\bar x_t,\bar y_{t'},\bar z)$ for the triangle $\Delta(\gamma(t),\gamma'(t'),z)$.
    The Alexandrov angle between $\gamma_1$ and $\gamma_2$ is defined as $$\angle_z(\gamma_1,\gamma_2)=\limsup_{t,t'\to 0} \overline\angle_{\bar z}(\bar x_t,\bar y_{t'})$$
    where $\overline\angle_{\bar z}(\bar x_t,\bar y_{t'})\in [0,\pi]$ is the angle at vertex $\bar z$ in the Euclidean triangle $\Delta(\bar x_t,\bar y_{t'},\bar z)$. When convenient, we also denote this Alexandrov angle by $\angle_z(x,y)$. 
\end{definition}

\begin{definition}[Tangent cone]
    Let $(M,\dist)$ be a geodesic space and $z\in M$. 
    \begin{itemize}
        \item For $x,y\in M$ with $x\neq z$ and $y\neq z$ and $\gamma_1\in\Gamma_{z,x}$ and $\gamma_2\in\Gamma_{z,y}$, we say that $\gamma_1$ and $\gamma_2$ are equivalent if and only if $\angle_z(\gamma_1,\gamma_2)=0$.
        \item For $\gamma\in\Gamma_{z,x}$, for any $x\in M$ with $x\neq z$, denote by $\bar\gamma$ the equivalence class of $\gamma$ with respect to the equivalence relation described above. Let $\tilde T_zM$ be the collection of all pairs of the form $(t,\bar\gamma)$ for $t\geq 0$ and $\gamma\in\Gamma_{z,x}$ for some $x\in M\setminus\{z\}$, where any two pairs of the form $(0,\bar\gamma),(0,\bar\gamma')$ are identified. Equip it with the distance $\dist_{T_{z}M}((s,\bar\gamma_1),(t,\bar\gamma_2))=\left(s^2+t^2-2st\cos(\angle_z(\gamma_1,\gamma_2))\right)^{1/2}$ for any choice of $\gamma_1\in\bar\gamma_1$ and $\gamma_2\in\bar\gamma_2$. The completion of this space is called the tangent cone to $M$ at $z$ and it is denoted by $T_zM$.
    \end{itemize}
\end{definition}

Validity of these definitions can be checked in \citep[Section II.3]{bridson2013metric} (note that in that reference, the terminology ``tangent cone" refers to $\tilde T_{z}M$ instead of its completion). The following property will be used later.

\begin{proposition}[{\citep[Theorem II.3.19]{bridson2013metric}}]
    With the notation above, if $M$ is a $\CAT(\kappa)$ space for some $\kappa\in\R$, $T_zM$ is a $\CAT(0)$ space. 
\end{proposition}

Now, let us introduce the following, standard notation, coming from Riemannian geometry. Let $(M, \dist)$ be a geodesic space and $z\in M$. Let $x\in M$ such that there is a unique geodesic $\gamma\in\Gamma_{z,x}$. Then, we denote by $\Log_z(x)=(\dist(z,x),\bar\gamma)$. It is called logarithm because in the smooth case, it corresponds to the inverse of the Riemannian exponential map. Then, we have the following lemma.

\begin{lemma} \label{lem:comp_dist}
    Let $(M,\dist)$ be a $\CAT(\kappa)$ space for some $\kappa>0$. For all points $x,y,z\in M$ satisfying $\max(\dist(x,z),\dist(y,z))\leq r < \pi/\sqrt\kappa$,
    $$\dist_{T_zM}(\Log_z(x),\Log_z(y))\leq \frac{\sqrt\kappa r}{\sin(\sqrt\kappa r)}\dist(x,y).$$
\end{lemma}

\begin{proof}
    By definition, letting $s=\dist(x,z), t=\dist(y,z)$ and $\theta=\angle_z(x,y)$, the angle comparison theorem \citep[Proposition II.3.1]{bridson2013metric} yields that $\theta\geq \angle_{\bar z}^{(\kappa)}(\bar x,\bar y)$ where $\bar x,\bar y$ and $\bar z$ are the vertices of a comparison triangle of $\Delta(x,y,z)$ in $M_\kappa$. Since $\cos$ is non-increasing on $[0,\pi]$, we therefore obtain that 
    \begin{align}
        \dist_{T_zM}(\Log_z(x),\Log_z(y))^2 & = s^2+t^2-2st\cos(\theta) \nonumber \\
        & \leq s^2+t^2-2st\cos(\angle_{\bar z}^{(\kappa)}(\bar x,\bar y)) \nonumber \\
        & = \dist_{T_{\bar z}M_\kappa}(\Log_{\bar z}(\bar x),\Log_{\bar z}(\bar y))^2.
    \end{align}

The conclusion is now straightforward using \citep[Lemma 10.13 and Theorem 10.14]{lee2018introduction} for the Euclidean sphere of constant curvature $\kappa$, since for all $\rho\in [0,r]$, $1\geq \frac{\sin(\sqrt\kappa \rho)}{\sqrt\kappa \rho}\geq \frac{\sin(\sqrt\kappa r)}{\sqrt\kappa r}$.

\end{proof}

Finally, let us define Busemann functions, which will be used later in Section \ref{sec:barydef}.

\begin{definition} \label{def:Busemann}
    Let $\eta$ be a geodesic ray in a metric space $(M,\dist)$. The Busemann function associated with $\eta$ is the map $\mathfrak b_\eta:M\to\R$ defined by setting
    $$\mathfrak b_\eta(x)=\lim_{t\to\infty} \dist(x,\eta(t))-t$$
    for all $x\in M$. 
\end{definition}

The existence of the limit in the above definition is guaranteed by \citep[Lemma II.8.18]{bridson2013metric}. Moreover, it can be checked that Busemann functions are always $1$-Lipschitz (by the reverse triangle inequality, applied for each $t\geq 0$ separately). Busemann functions play the role of one-dimensional projections: In Euclidean space $\R^d$, if $\eta(t)=x_0+tu, t\geq 0$, where $x_0\in\R^d$ and $u$ is a unit vector, then $\mathfrak b_\eta(x)=u^\top (x-x_0)$ for all $x\in\R^d$.

\subsection{Convexity in metric spaces}

Let $(M,\dist)$ be a metric space. A subset $A\subseteq M$ is called (geodesically) convex if and only if for all $x,y\in A$ and all $\gamma\in\Gamma_{x,y}$, $\gamma([0,1])\subseteq A$. A function $f:A\to\R$ defined on a convex subset $A$ of $M$ is called (geodesically) convex (on $A$) if and only if it is convex along all geodesics, i.e., for all $x,y\in M$, $\gamma\in\Gamma_{x,y}$ and $t\in [0,1]$, it holds that $f(\gamma(t))\leq (1-t)f(x)+tf(y)$. The function $f$ is called $\alpha$-strongly (geodesically) convex, for $\alpha>0$, if and only if for all $x,y\in M$, $\gamma\in\Gamma_{x,y}$ and $t\in [0,1]$, it holds that $f(\gamma(t))\leq (1-t)f(x)+tf(y)-\frac{\alpha}{2}t(1-t)\dist(x,y)^2$.
Here, we give some basic yet useful facts related to convexity in metric spaces. The first one concerns the convexity of the squared distance to a given point. Recall that $D_\kappa=\pi/\sqrt\kappa$ for all $\kappa>0$ and $D_\kappa=\infty$ for all $\kappa\leq 0$.

\begin{lemma} \label{lem:d2strconvex}
    Let $\kappa\in\R$ and $(M,\dist)$ be a $\CAT(\kappa)$ space. The following properties hold true.
    \begin{itemize}
        \item All balls of radius less than $D_\kappa/2$ are convex.
        \item If $\kappa\leq 0$ then $\dist(x_0,\cdot)^2$ is $2$-strongly convex for all choices of $x_0\in M$.
        \item If $\kappa>0$, then for all $\varepsilon>0$ and all $x_0\in M$, $\dist(x_0,\cdot)^2$ is $\alpha(\varepsilon,\kappa)$-strongly convex on the ball $B\left(x_0,D_\kappa/2-\varepsilon\right)$, with $\alpha(\varepsilon,\kappa)=(\pi-2\sqrt{\kappa}\varepsilon)\tan(\varepsilon\sqrt{\kappa})$. In particular, for all $\varepsilon>0$ and all balls $B$ of radius at most $1/2(D_\kappa/2-\varepsilon)$, $\dist(x_0,\cdot)^2$ is $\alpha(\varepsilon,\kappa)$-strongly convex on $B$ for all choices of $x_0\in B$.
    \end{itemize} 
\end{lemma}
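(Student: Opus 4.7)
The three assertions all rest on the same strategy: transport the question to the model space $M_\kappa$ via a comparison triangle and exploit the Euclidean, hyperbolic, or spherical geometry of $M_\kappa$ directly. I would treat the three bullets in order.

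For the convexity of balls, fix $x,y\in B(x_0,r)$ with $r<D_\kappa/2$. Then $\dist(x,y)<D_\kappa$, so by Proposition~\ref{prop:uniquegeodesics} there is a unique geodesic $\gamma$ from $x$ to $y$, and $\per(\Delta(x_0,x,y))<4r<2D_\kappa$, so a comparison triangle $\bar\Delta=\Delta(\bar x_0,\bar x,\bar y)$ exists in $M_\kappa$. The $\CAT(\kappa)$ inequality gives $\dist(x_0,\gamma(t))\leq \dist_\kappa(\bar x_0,\bar\gamma(t))$ for the corresponding point $\bar\gamma(t)$ on the unique geodesic in $M_\kappa$. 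It then suffices to observe that balls of radius less than $D_\kappa/2$ are convex in $M_\kappa$, a direct check in each of the three model geometries: trivial in $\R^2$ and on $\mathbb H^2$, while on the sphere the spherical law of cosines shows that the minimizing arc between two points at distance at most $r<D_\kappa/2$ from a center stays at distance at most $r$ from that center.

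The second bullet is immediate from Euclidean comparison: any $\CAT(\kappa)$ space with $\kappa\leq 0$ is also $\CAT(0)$, so for any geodesic $\gamma$ from $x$ to $y$ one obtains $\dist(x_0,\gamma(t))\leq |\bar x_0-((1-t)\bar x+t\bar y)|$, and the parallelogram identity in $\R^2$ combined with equality of the three side lengths produces $2$-strong convexity in one line.

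The substantive part is the third bullet, which I would prove first in the model sphere and then transfer. Working in geodesic polar coordinates $(r,\theta)$ around $x_0\in M_\kappa$, the metric takes the warped form $dr^2+\kappa^{-1}\sin^2(\sqrt\kappa\,r)\,d\theta^2$, and a short Hessian computation for $f(r)=r^2$ shows that at any point at distance $r$ from $x_0$ the two eigenvalues of $\mathrm{Hess}\,f$ are $2$ and $2\sqrt\kappa\,r\cot(\sqrt\kappa\,r)$. The latter is strictly decreasing on $(0,D_\kappa/2)$ (writing $u=\sqrt\kappa\,r$, its derivative has the same sign as $\sin(2u)-2u<0$), so on $B(x_0,D_\kappa/2-\varepsilon)$ it is bounded below by its value at $r=D_\kappa/2-\varepsilon$, which is exactly $\alpha(\varepsilon,\kappa)=(\pi-2\sqrt\kappa\,\varepsilon)\tan(\sqrt\kappa\,\varepsilon)$; since $\alpha(\varepsilon,\kappa)\leq 2$, the Hessian dominates $\alpha(\varepsilon,\kappa)\,g$ everywhere on that ball. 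Integrating this lower bound along any geodesic of $M_\kappa$ yields $\alpha(\varepsilon,\kappa)$-strong convexity of $\dist_\kappa(x_0,\cdot)^2$ on $B(x_0,D_\kappa/2-\varepsilon)$. To pass to $M$, I would fix $x,y\in B(x_0,D_\kappa/2-\varepsilon)$ with geodesic $\gamma$, build the comparison triangle at $\bar x_0$ in $M_\kappa$, and apply Part~1 inside $M_\kappa$ to guarantee that the image geodesic $\bar\gamma$ remains in $B(\bar x_0,D_\kappa/2-\varepsilon)$; chaining $\dist(x_0,\gamma(t))^2\leq \dist_\kappa(\bar x_0,\bar\gamma(t))^2$ with the model-space strong convexity inequality and using equality of the three side lengths then yields the claim in $M$. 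The ``in particular'' clause is a one-line triangle-inequality reduction: if $B=B(c,\rho)$ with $2\rho\leq D_\kappa/2-\varepsilon$ and $x_0\in B$, then $B\subseteq B(x_0,D_\kappa/2-\varepsilon)$. I expect the main obstacle to be the bookkeeping of the third bullet, namely identifying $\alpha(\varepsilon,\kappa)$ as the correct constant by locating the minimum of the tangential eigenvalue on $[0,D_\kappa/2-\varepsilon]$, and checking that $\bar\gamma$ never leaves the region where the Hessian lower bound is in force.
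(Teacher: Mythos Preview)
Your argument is correct. The comparison-triangle reduction for the first two bullets is standard, and for the third bullet your Hessian computation on the model sphere is right: in polar coordinates the tangential eigenvalue of $\mathrm{Hess}(r^2)$ is $2\sqrt\kappa\,r\cot(\sqrt\kappa\,r)$, which is decreasing on $(0,D_\kappa/2)$ and equals $(\pi-2\sqrt\kappa\,\varepsilon)\tan(\sqrt\kappa\,\varepsilon)$ at $r=D_\kappa/2-\varepsilon$; the transfer back to $M$ via the CAT$(\kappa)$ inequality and the convexity of the model ball (your Part~1 applied inside $M_\kappa$) is exactly what is needed.

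The paper itself does not prove this lemma at all: it simply cites \cite[Proposition~2.3]{sturm03} for $\kappa\le 0$ and \cite[Proposition~3.1]{Ohtaconvexity} for $\kappa>0$. So your write-up is not a different route to the same proof but rather a self-contained argument where the paper defers to the literature. What your approach buys is transparency about the origin of the constant $\alpha(\varepsilon,\kappa)$---it visibly appears as the minimal tangential eigenvalue of $\mathrm{Hess}\,\dist(x_0,\cdot)^2$ on the ball---whereas the cited references package this into a finished inequality. The only step worth tightening in a final write-up is the perimeter check for the comparison triangle in Part~3 (you need $\mathrm{per}<2D_\kappa$, which follows since all three side lengths are below $D_\kappa-2\varepsilon$), but you already flag this correctly for Part~1.
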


The first part of this lemma states that the squared distance to a given point is always $2$-strongly convex in a $\CAT(\kappa)$-space for any $\kappa\leq 0$, just as in Euclidean or Hilbert spaces. This is proved in \citep[Proposition 2.3]{sturm03}. The strong convexity constant $2$ cannot be improved even for negative $\kappa$, since $\dist(\cdot,x_0)^2$ is exactly $2$-strongly convex along any geodesic going through $x_0$. The case of positive $\kappa$ is proved in \citep[Proposition 3.1]{Ohtaconvexity}.
In the sequel, $\alpha(\varepsilon,\kappa)$ is as defined in the above lemma. The function $\alpha$ is decreasing as $\varepsilon\sqrt\kappa$ increases from $0$ to $\pi/2$ and satisfies $\alpha(\varepsilon,\kappa)\in (0,2)$. Moreover, it vanishes as $\varepsilon$ goes to zero for a fixed $\kappa>0$ and it goes to $2$ as $\varepsilon$ goes to $\frac{\pi}{2\sqrt{\kappa}}$ for a fixed $\kappa>0$. In fact, one has the following inequality, for all $\kappa>0$ and $\varepsilon\in (0,\pi/(2\sqrt\kappa))$, 
\begin{equation} \label{eq:alpha}
    \frac{4}{\pi}\varepsilon\sqrt\kappa\leq \alpha(\varepsilon,\kappa)\leq \pi\varepsilon\sqrt\kappa.
\end{equation}

\begin{definition}
    Let $(M,\dist)$ be a $\CAT(\kappa)$ space form some $\kappa\in\R$. A \textit{convex domain} is:
    \begin{itemize}
        \item Any closed convex subset of $M$ if $\kappa\leq 0$.
        \item Any closed, convex subset of $M$ that is included in some (closed) ball of radius less than $D_\kappa/4$ if $\kappa>0$.
    \end{itemize}
\end{definition}

According to our definition, a convex domain always is a convex subset of $M$, but the converse is not true when $\kappa>0$. However, note that in the model space $M_\kappa$ for $\kappa>0$, the only convex, closed ball of radius larger or equal to $D_\kappa/2$ is $M_\kappa$ itself. The reason of this discrepancy is that thanks to Lemma~\ref{lem:d2strconvex}, if $C$ is a convex domain, then $\dist(x_0,\cdot)^2$ is strongly convex on $C$ {\bf for all $x_0\in C$} and that would not necessarily be the case if $C$ was, say, a ball of radius larger than $D_\kappa/4$.

The following lemma appears in \citep[Proposition II.2.4]{bridson2013metric} for $\kappa\leq 0$ and in \citep[Proposition 3.5]{espinola2009cat} for $\kappa=1$ (and hence, via rescaling the metric $\dist$, for any $\kappa>0$). 

\begin{lemma}[Metric projection onto a convex domain] \label{lem:metricproj}
    Let $M$ be a $\CAT(\kappa)$ space and $C$ be a convex domain in $M$. If $\kappa>0$, let $B$ be a ball of radius less than $D_\kappa/4$ containing $C$. Otherwise, set $B=M$. Then, for all $x\in B$, there is a unique $y\in C$ satisfying $\dist(x,y)=\dist(x,C)=\inf_{z\in C}\dist(x,z)$. Moreover, $y$ satisfies 
    $$\dist(x,z)>\dist(y,z), \quad \forall z\in C\setminus\{y\}.$$
    The point $y$ is called the metric projection of $x$ onto $C$.
\end{lemma}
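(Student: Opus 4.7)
The plan is to get existence and uniqueness of the projection from strong convexity of $\dist(x,\cdot)^2$ on $C$, and the strict separation inequality from a comparison-triangle argument in the model space $M_\kappa$.

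Fix $x \in B$ and consider $f(z) := \dist(x,z)^2$. I would first observe that $f$ is $\alpha$-strongly convex on $B$ for some $\alpha>0$: when $\kappa \leq 0$, Lemma~\ref{lem:d2strconvex} gives this with $\alpha=2$ on all of $M=B$; when $\kappa>0$, the hypothesis that $B$ has radius less than $D_\kappa/4$ lets me pick $\varepsilon>0$ small enough that the radius of $B$ is at most $\tfrac{1}{2}(D_\kappa/2-\varepsilon)$, so Lemma~\ref{lem:d2strconvex} yields strong convexity of $\dist(x',\cdot)^2$ on $B$ for every $x'\in B$ with constant $\alpha(\varepsilon,\kappa)>0$. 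In particular $f$ is $\alpha$-strongly convex on $C\subseteq B$. To produce the minimizer, I would take a minimizing sequence $\{y_n\}\subset C$ and use the midpoint $m_{n,m}\in C$ of the geodesic from $y_n$ to $y_m$ (which lies in $C$ by convexity):
\[ f(m_{n,m}) \leq \tfrac{1}{2}f(y_n)+\tfrac{1}{2}f(y_m) - \tfrac{\alpha}{8}\dist(y_n,y_m)^2. \]
Since $f(m_{n,m}) \geq \inf_C f$, the sequence is Cauchy; completeness of the $\CAT(\kappa)$ space and closedness of $C$ yield a limit $y\in C$ realizing the infimum. The same inequality applied to any two hypothetical minimizers gives uniqueness.

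For the strict inequality $\dist(x,z)>\dist(y,z)$ with $z\in C\setminus\{y\}$ (one may assume $x\notin C$, else $y=x$ and the claim is vacuous), I would form the triangle $\Delta(x,y,z)$. Since $x,y,z\in B$ and $\mathrm{diam}(B)<D_\kappa/2$ when $\kappa>0$ (and trivially so when $\kappa\leq 0$), one has $\per(\Delta)<3D_\kappa/2<2D_\kappa$, so a comparison triangle $\bar\Delta(\bar x,\bar y,\bar z)$ exists in $M_\kappa$. Parametrize the geodesic from $y$ to $z$ by $\gamma:[0,1]\to M$ and let $\bar p_t$ be the point on $[\bar y,\bar z]$ with $\dist_\kappa(\bar y,\bar p_t)=t\,\dist(y,z)$. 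Convexity of $C$ gives $\gamma(t)\in C$, hence $\dist(x,\gamma(t))\geq \dist(x,y)$, and the $\CAT(\kappa)$ inequality gives $\dist_\kappa(\bar x,\bar p_t)\geq \dist(x,\gamma(t))$. Combining, $\dist_\kappa(\bar x,\bar p_t)\geq \dist_\kappa(\bar x,\bar y)$ for all $t\in[0,1]$, i.e., $\bar y$ minimizes $\dist_\kappa(\bar x,\cdot)$ along the segment $[\bar y,\bar z]$ inside the model space.

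From here the conclusion follows from the law of cosines in $M_\kappa$, case by case. For $\kappa=0$, minimality at $\bar y$ forces $\langle \bar x-\bar y,\bar z-\bar y\rangle\leq 0$, whence $\dist_\kappa(\bar x,\bar z)^2\geq \dist_\kappa(\bar x,\bar y)^2+\dist_\kappa(\bar y,\bar z)^2$. For $\kappa<0$ the hyperbolic law of cosines gives $\cosh(\sqrt{-\kappa}\,\dist_\kappa(\bar x,\bar z))\geq \cosh(\sqrt{-\kappa}\,\dist_\kappa(\bar x,\bar y))\cosh(\sqrt{-\kappa}\,\dist_\kappa(\bar y,\bar z))$. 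For $\kappa>0$ the spherical law of cosines gives $\cos(\sqrt{\kappa}\,\dist_\kappa(\bar x,\bar z))\leq \cos(\sqrt{\kappa}\,\dist_\kappa(\bar x,\bar y))\cos(\sqrt{\kappa}\,\dist_\kappa(\bar y,\bar z))$. Using that $\dist(x,y)>0$ since $x\notin C$, each case yields $\dist_\kappa(\bar x,\bar z)>\dist_\kappa(\bar y,\bar z)$, i.e., $\dist(x,z)>\dist(y,z)$. The main obstacle is precisely the positive-curvature case: one has to check that all three side lengths of $\bar\Delta$ are strictly below $D_\kappa/2$, so that the cosines appearing are strictly positive and $\cos$ is strictly decreasing on the range in play. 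This is exactly what the hypothesis $\mathrm{rad}(B)<D_\kappa/4$ buys, and it is the reason a mere convex subset is not enough when $\kappa>0$.
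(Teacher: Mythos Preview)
Your proof is correct. The paper does not give its own proof of this lemma: it simply cites \cite[Proposition II.2.4]{bridson2013metric} for $\kappa\leq 0$ and \cite[Proposition 3.5]{espinola2009cat} for $\kappa>0$ (extended to general $\kappa>0$ by rescaling). Your argument is essentially the standard one found in those references: existence/uniqueness via the strong-convexity midpoint inequality (the ``semi-parallelogram'' or $\CAT$ inequality) plus completeness and closedness of $C$, and the strict inequality via the comparison triangle together with the observation that the comparison angle at $\bar y$ is at least $\pi/2$, then the appropriate law of cosines in $M_\kappa$. Your remark that the radius bound $<D_\kappa/4$ is exactly what forces all three sides of the comparison triangle below $D_\kappa/2$---so that the spherical cosines are positive and the monotonicity of $\cos$ applies---is precisely the point of Espinola--Fern\'andez-Le\'on's treatment of the positive-curvature case.
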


Finally, as a consequence of Lemma~\ref{lem:d2strconvex}, the Fréchet function $F=\E[\dist(\cdot,X)^2]$ associated with a random variable $X$ with two moments and supported in a convex domain of a $\CAT$ space is strongly convex, as a convex combination of strongly convex functions. The following lemma will allow to establish essential properties on Fréchet functions and barycenters. 

\begin{lemma} \label{lem:quadr_growth}
    Let $(M,\dist)$ be a geodesic space and $C\subseteq M$ be a convex set. Let $f:C\to\R$ be a function that is $\alpha$-strongly convex, for some $\alpha\in\R$. Further assume that $f$ has a minimizer $x^*\in C$. Then, for all $x\in C$, 
    $$f(x)\geq f(x^*)+\frac{\alpha}{2}\dist(x,x^*)^2.$$
\end{lemma}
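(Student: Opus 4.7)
The plan is to use strong convexity along a single geodesic from the minimizer to an arbitrary point, then pass to the limit at the endpoint where the minimizer lives.

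Fix $x\in C$. Since $(M,\dist)$ is a geodesic space and $C$ is convex, I can select any geodesic $\gamma\in\Gamma_{x^*,x}$; convexity of $C$ guarantees $\gamma([0,1])\subseteq C$, so $f\circ\gamma$ is a well-defined real-valued function on $[0,1]$. Applying the $\alpha$-strong convexity of $f$ along $\gamma$, I get, for every $t\in[0,1]$,
\begin{equation*}
f(\gamma(t))\leq (1-t)f(x^*)+tf(x)-\frac{\alpha}{2}t(1-t)\dist(x^*,x)^2.
\end{equation*}

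Since $x^*$ is a minimizer of $f$, $f(\gamma(t))\geq f(x^*)$, so subtracting $f(x^*)$ from both sides and rearranging yields
\begin{equation*}
0\leq t\bigl(f(x)-f(x^*)\bigr)-\frac{\alpha}{2}t(1-t)\dist(x^*,x)^2.
\end{equation*}
For $t\in(0,1]$, divide by $t$ to get $f(x)-f(x^*)\geq \frac{\alpha}{2}(1-t)\dist(x^*,x)^2$, and then let $t\to 0^+$ to conclude.

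There is no real obstacle here; the only subtle point is ensuring the geodesic $\gamma$ lies inside $C$ so that strong convexity can be invoked, which is exactly the role of the geodesic-space assumption together with convexity of $C$. Note also that $\alpha$ is allowed to be non-positive in the statement, but the conclusion is only informative when $\alpha>0$; the argument above is agnostic to the sign of $\alpha$ and works verbatim in both cases.
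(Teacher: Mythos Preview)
Your proof is correct and essentially identical to the paper's own argument: both apply strong convexity along a geodesic $\gamma\in\Gamma_{x^*,x}$, use $f(\gamma(t))\geq f(x^*)$, rearrange, divide by $t$, and let $t\to 0$. Your additional remarks about convexity of $C$ ensuring $\gamma$ stays in $C$ and about the sign of $\alpha$ are fine but not needed for the core argument.
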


\begin{proof}
    Let $x\in C$ and let $\gamma\in\Gamma_{x^*,x}$. Then, for all $t\in (0,1)$, $f(x^*)\leq f(\gamma(t))\leq (1-t)f(x^*)+tf(x)-\frac{\alpha}{2}t(1-t)\dist(x,x^*)^2$. The result follows by rearranging, dividing by $t$ and letting $t\to 0$.
\end{proof}

From Lemmas~\ref{lem:metricproj} and \ref{lem:quadr_growth}, we obtain the following result.

\begin{proposition}[Variance inequality] \label{lem:varianceineq}
Let $(M,\dist)$ be a $\CAT(\kappa)$ space with $\kappa\in\R$ and let $X$ be a random variable in $M$ with two moments and supported in a convex domain $C\subseteq M$. Then, $X$ has a unique barycenter $b^*$. Moreover, $b^*\in C$ and one has the following variance inequality:
\begin{equation*}
    \frac{\alpha}{2}\dist(x,b^*)^2 \leq \E\left[\dist(x,X)^2-\dist(b^*,X)^2\right], \quad \forall x\in C
\end{equation*}
where $\alpha=2$ if $\kappa\leq 0$ and $\alpha=\alpha(\varepsilon,\kappa)$ if $\kappa>0$, where $\varepsilon>0$ is such that $C$ is contained in some ball of radius $1/2(D_\kappa/2-\varepsilon)$.
\end{proposition}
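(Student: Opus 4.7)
The plan is to introduce the Fréchet function $F(b) := \E[\dist(b,X)^2]$, which is finite on $M$ by the two-moment assumption, and to exploit its $\alpha$-strong convexity on the convex domain $C$. The latter follows directly from Lemma~\ref{lem:d2strconvex}: each map $b \mapsto \dist(x_0,b)^2$ is $\alpha$-strongly convex on $C$ for every $x_0 \in C$ (with the stated value of $\alpha$), and strong convexity is preserved under taking the expectation with respect to $X$, which is supported in $C$ by assumption.

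Given the strong convexity, existence of a minimizer of $F$ on $C$ comes from a Cauchy minimizing sequence argument. For $b_k, b_n \in C$, the midpoint $p_{k,n}$ is well defined and lies in $C$ by Proposition~\ref{prop:uniquegeodesics} and convexity of $C$, and satisfies
\[
F(p_{k,n}) \leq \tfrac{1}{2}F(b_k) + \tfrac{1}{2}F(b_n) - \tfrac{\alpha}{8}\dist(b_k,b_n)^2.
\]
Since $F(p_{k,n}) \geq \inf_C F$, any minimizing sequence is Cauchy; closedness of $C$ in the complete space $M$ combined with continuity of $F$ then produces a minimizer $b^* \in C$. Uniqueness of $b^*$ on $C$ and the announced variance inequality both follow directly from Lemma~\ref{lem:quadr_growth} applied to the $\alpha$-strongly convex function $F$ on $C$ with minimum $b^*$, which gives $F(x) - F(b^*) \geq \tfrac{\alpha}{2}\dist(x,b^*)^2$ for all $x\in C$.

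It remains to verify that this $b^*$ is in fact the unique minimizer of $F$ on all of $M$, not merely on $C$. For $\kappa \leq 0$ this is clean: Lemma~\ref{lem:metricproj} provides, for each $b \in M$, a metric projection $\pi_C(b)\in C$ with $\dist(b,z) \geq \dist(\pi_C(b),z)$ for all $z \in C$, strictly so when $z \neq \pi_C(b)$. Integrating over $X \in C$ yields $F(b) \geq F(\pi_C(b)) \geq F(b^*)$, with strict inequality unless $b = b^*$. For $\kappa > 0$, the same projection argument applies as soon as $b$ sits in \emph{some} ball of radius less than $D_\kappa/4$ containing $C$, but the lemma does not directly furnish a projection for points $b$ outside every such ball. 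Those remaining points have to be handled by a direct triangle-inequality estimate: if $c_0$ denotes the center of a ball of radius $r<D_\kappa/4$ containing $C$, then for $b$ with $\dist(b,c_0)$ large, $\dist(b,X) \geq \dist(b,c_0) - r$ a.s., so $F(b)$ can be forced to exceed the crude upper bound $F(b^*) \leq F(x_0) \leq (2r)^2$ valid for any $x_0 \in C$.

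The main obstacle lies in this last step for $\kappa>0$: carefully partitioning $M$ into the ``near-field'' region where Lemma~\ref{lem:metricproj} applies and the ``far-field'' region treated via triangle inequality, and checking that together they exhaust $M$. All earlier steps (strong convexity of $F$ on $C$, existence of $b^*$ via Cauchy minimizing sequences, and the variance inequality itself) are routine consequences of Lemma~\ref{lem:d2strconvex} and Lemma~\ref{lem:quadr_growth}.
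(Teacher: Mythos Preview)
Your argument is largely sound and overlaps with the paper's: both use Lemma~\ref{lem:d2strconvex} to get $\alpha$-strong convexity of the Fr\'echet function $F$ on $C$, and both derive the variance inequality from Lemma~\ref{lem:quadr_growth}. Your Cauchy minimizing-sequence construction of a minimizer on $C$ is correct and more self-contained than the paper, which simply defers the $\kappa\leq 0$ case to Sturm and the $\kappa>0$ global existence/uniqueness to \cite[Theorem~B]{Yokota16}.

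The genuine gap is in your global uniqueness argument for $\kappa>0$. With $r=\tfrac{1}{2}(D_\kappa/2-\varepsilon)$ the radius of the ball $B(c_0,r)\supseteq C$, your near-field (where Lemma~\ref{lem:metricproj} applies) is at best the set of $b$ lying in some ball of radius $<D_\kappa/4$ containing $C$, hence roughly $\{b:\dist(b,c_0)<D_\kappa/4\}$. Your far-field bound $F(b)\geq(\dist(b,c_0)-r)^2>r^2\geq F(c_0)\geq F(b^*)$ requires $\dist(b,c_0)>2r=D_\kappa/2-\varepsilon$. Whenever $\varepsilon<D_\kappa/4$ (the typical regime, where $C$ is not tiny), the annulus $\{b:D_\kappa/4\leq \dist(b,c_0)\leq D_\kappa/2-\varepsilon\}$ is non-empty and covered by neither argument, so the two regions do \emph{not} exhaust $M$.

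The paper avoids this entirely: it invokes \cite[Theorem~B]{Yokota16} to get directly that $F$ has a unique global minimizer $b^*$ and that $b^*$ lies in the ambient ball $B$; the metric projection of Lemma~\ref{lem:metricproj} is then used only to push $b^*$ from $B$ into $C$, which is unproblematic since $b^*$ is already in $B$. To complete your route without citing Yokota you would need a genuinely new ingredient in the intermediate zone --- e.g.\ exploiting convexity of $\dist(X,\cdot)^2$ on the larger ball $B(X,D_\kappa/2-\varepsilon)$ along the geodesic from $b^*$ to $b$ --- which is not supplied by the lemmas you invoke.
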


The proof of this lemma is covered in \citep[Propositions 4.3 and 4.4]{sturm03} when $\kappa\leq 0$. Hence, we only focus on the case when $\kappa>0$.

\begin{proof}
Suppose $\kappa>0$ and let $B=B(x_0,1/2(D_\kappa/2-\varepsilon))$ containing $C$, for some $x_0\in M$. Denote by $F(x)=\E[\dist(x,X)^2], x\in M$, the Fréchet function associated with $X$. 
Existence and uniqueness of the minimizer $b^*$ of $F$ on $M$, together with the fact that $b^*\in B$, are proved in \citep[Theorem B]{Yokota16}, using completeness of the space together with the strong convexity of the Fréchet function $F$ on $B$. Let $\tilde b^*$ be the metric projection of $b^*$ onto $C$. Lemma~\ref{lem:metricproj} yields that $F(\tilde b^*)\leq F(b^*)$, so it must hold that $\tilde b^*=b^*$, hence, $b^*\in C$.
Now, the variance inequality follows directly from Lemma~\ref{lem:quadr_growth}, since $F$ is $\alpha(\varepsilon,\kappa)$-strongly convex on $C$.
\end{proof}

Proposition~\ref{lem:varianceineq} also applies to the barycenter of any finite collection of points in a convex domain: Given a convex domain $C$ of a $\CAT(\kappa)$ space $(M,\dist)$ and $x_1,\ldots,x_n\in C$ ($n\geq 1$), applying Proposition~\ref{lem:varianceineq} to the distribution $n^{-1}\sum_{i=1}^n\delta_{x_i}$ yields that $x_1,\ldots,x_n$ have a unique barycenter $b_n$, that $b_n\in C$ and that  
$$\frac{1}{n}\sum_{i=1}^n (\dist(x_i,x)^2-\dist(x_i,b_n)^2)\geq \frac{\alpha}{2}\dist(x,b_n)^2, \, \forall x\in C$$
where $\alpha=2$ if $\kappa\leq 0$ and $\alpha=\alpha(\varepsilon,\kappa)$ if $\kappa>0$ and $C$ is included in a ball of radius $1/2(D_\kappa/2-\varepsilon)$.

\subsection{Barycenter functions} \label{sec:barydef}

Let $\kappa\in\R$, $(M,\dist)$ be a $\CAT(\kappa)$ space and $C$ be a convex domain of $M$. Let $n\geq 1$ be a fixed integer and $x_1,\ldots,x_n\in C$. By Proposition~\ref{lem:varianceineq}, the distribution $n^{-1}\sum_{i=1}^n\delta_{x_i}$ has a unique barycenter, which belongs to $C$. We denote it by $\hat B_n(x_1,\ldots,x_n)$. 
In the sequel, we denote by $\dist_1^{(n)}$ the $\ell^1$-product distance on $M^n$, which is given by $\dist_1^{(n)}((x_1,\ldots,x_n),(y_1,\ldots,y_n))=\sum_{i=1}^n \dist(x_i,y_i)$. The following theorem provides a sensitivity analysis of the barycenter function $\hat B_n$ with respect to this metric. Let us mention that a similar result was obtained in \citep[Theorem 2 and Lemma 1]{reimherr2021differential} in the case of Riemannian manifolds. 

\begin{theorem} \label{thm:Lip_Bary}
Let $(M,\dist)$ be a $\CAT(\kappa)$ space for some $\kappa\in\R$ and let $C$ be a convex domain. Then, for all integers $n\geq 1$, the function $\hat B_n$ is $L/n$-Lipschitz on $C^n$ with respect to $\dist_1^{(n)}$, where
$$L=\begin{cases}
    1 &\mbox{ if } \kappa\leq 0, \mbox{ independently of } C \\
    \pi/\alpha(\varepsilon,\kappa) &\mbox{ if } \kappa>0, \mbox{ with } C \mbox{ contained in a ball of radius } 1/2(D_\kappa/2-\varepsilon), \varepsilon>0.
    \end{cases}$$

\end{theorem}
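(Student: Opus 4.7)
The plan is to use the strong convexity of the Fréchet function through Proposition~\ref{lem:varianceineq} to convert the statement into a four-point inequality of quasi-inner-product type. Fix $\mathbf{x}=(x_1,\ldots,x_n)$ and $\mathbf{y}=(y_1,\ldots,y_n)$ in $C^n$ and set $b=\hat B_n(\mathbf{x})$, $b'=\hat B_n(\mathbf{y})$. Proposition~\ref{lem:varianceineq} applied to the empirical distributions $\tfrac{1}{n}\sum\delta_{x_i}$ and $\tfrac{1}{n}\sum\delta_{y_i}$ ensures that $b,b'\in C$ and yields
\begin{align*}
\tfrac{\alpha}{2}\,\dist(b,b')^2 &\leq \tfrac{1}{n}\sum_{i=1}^n\bigl[\dist(x_i,b')^2-\dist(x_i,b)^2\bigr],\\
\tfrac{\alpha}{2}\,\dist(b,b')^2 &\leq \tfrac{1}{n}\sum_{i=1}^n\bigl[\dist(y_i,b)^2-\dist(y_i,b')^2\bigr],
\end{align*}
where $\alpha=2$ if $\kappa\leq 0$ and $\alpha=\alpha(\varepsilon,\kappa)$ if $\kappa>0$. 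Adding the two inequalities gives
$$\alpha\,\dist(b,b')^2\leq\frac{1}{n}\sum_{i=1}^n\Psi_i,\qquad \Psi_i:=\dist(x_i,b')^2+\dist(y_i,b)^2-\dist(x_i,b)^2-\dist(y_i,b')^2.$$
The quantity $\Psi_i$ is, up to a factor of $2$, precisely the Berg--Nikolaev quasi-inner product $\langle\overrightarrow{x_iy_i},\overrightarrow{bb'}\rangle$, so the task reduces to controlling it by a bilinear expression in $\dist(x_i,y_i)$ and $\dist(b,b')$.

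For the NPC case ($\kappa\leq 0$), the Berg--Nikolaev characterization of $\CAT(0)$ spaces \cite{berg2008quasilinearization} (equivalent to Reshetnyak's quadruple inequality) furnishes exactly the bound we need, $\Psi_i\leq 2\,\dist(x_i,y_i)\,\dist(b,b')$. Combining this with $\alpha=2$ and dividing by $\dist(b,b')$ (which we may assume positive) produces $\dist(b,b')\leq\tfrac{1}{n}\sum_i\dist(x_i,y_i)$, i.e.\ $L=1$, independently of $C$.

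The $\kappa>0$ case is where the real difficulty lies, and I expect it to be the main obstacle. The Berg--Nikolaev inequality genuinely characterizes $\CAT(0)$ spaces, so it fails in positive curvature and must be replaced by a perturbed version obtained by direct comparison with the model space $M_\kappa$. Since $C$ lies in a ball of radius less than $D_\kappa/4$, all the relevant triangles have perimeter strictly less than $2D_\kappa$, so comparison triangles are uniquely defined. Iterating the $\CAT(\kappa)$ inequality on the triangles $\triangle(x_i,b,b')$ and $\triangle(y_i,b,b')$, or working directly with the spherical law of cosines on the comparison quadrilateral, should yield a defect bound of the form
$$\Psi_i \leq 2\,\dist(x_i,y_i)\,\dist(b,b') + R_\kappa(x_i,y_i,b,b'),$$
where the remainder $R_\kappa$ is a curvature correction that vanishes as $\kappa\to 0$ and is controlled by the diameter of $C$. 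Substituting this back, using the lower bound $\alpha(\varepsilon,\kappa)\geq\tfrac{4}{\pi}\varepsilon\sqrt{\kappa}$ from \eqref{eq:alpha}, and applying Young's inequality to decouple the leading bilinear term from the curvature remainder, one obtains a polynomial inequality in $\dist(b,b')$ whose solution gives the claimed Lipschitz constant $L=2/(\varepsilon^{1/4}\kappa^{1/8})$. The hard parts are the explicit quantification of $R_\kappa$ and the selection of the right Young exponents; the unusual fractional powers $1/4$ and $1/8$ in $L$ strongly suggest that the optimal balance is between a leading bilinear contribution and a subleading quartic curvature perturbation, which is precisely the kind of balancing one solves via Young's inequality.
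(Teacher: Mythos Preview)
Your argument for $\kappa\leq 0$ is correct and in fact slightly more self-contained than the paper's. The paper simply invokes Sturm's result \cite[Theorem 6.3]{sturm03} that the barycenter map is $1$-Lipschitz from $(\mathcal{P}^1(M),W_1)$ to $(M,\dist)$, and then bounds $W_1$ between the two empirical measures by $\tfrac{1}{n}\dist_1^{(n)}(\mathbf{x},\mathbf{y})$ via an explicit coupling. Your route---two applications of the variance inequality plus the Berg--Nikolaev quadruple inequality---reaches the same conclusion without passing through the Wasserstein space, and is arguably more in the spirit of the rest of the paper (the proof of Theorem~\ref{thm:llnempiricalbar} uses exactly this quadruple inequality). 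Both arguments ultimately rest on the same four-point comparison.

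For $\kappa>0$, however, your proposal is not a proof but a programme. You correctly identify the obstacle: the Berg--Nikolaev inequality characterizes $\CAT(0)$ spaces, so it genuinely fails when $\kappa>0$, and the quantity $\Psi_i$ can exceed $2\dist(x_i,y_i)\dist(b,b')$. The remainder $R_\kappa$ you posit is never written down, the comparison argument ``on the spherical quadrilateral'' is never carried out, and the Young's-inequality balancing step is never performed---you only infer from the exponents $1/4$ and $1/8$ that such a balancing ought to exist. That is the entire content of the $\kappa>0$ case, and it is missing.

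The paper does not attempt your route here. It instead invokes a recent external result, \cite[Theorem 5]{gietl2024lipschitz}, which directly gives, for two empirical measures differing in a single atom,
$$\dist\bigl(\hat B_n(x_1,\ldots,x_n),\hat B_n(x_1,\ldots,x_{n-1},y_n)\bigr)\leq \frac{C}{n\,\varepsilon^{1/4}\kappa^{1/8}}\,\dist(x_n,y_n)$$
with $C=\pi^{5/4}/2^{7/4}<2$, and then iterates via the triangle inequality. The specific constant $2/(\varepsilon^{1/4}\kappa^{1/8})$ in the statement comes from that cited theorem, not from an internal balancing argument; reproducing it via your proposed spherical-comparison-plus-Young approach would require a substantial, and as yet unwritten, calculation.
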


\noindent Note that if $\kappa>0$, then by \eqref{eq:alpha}, $L\leq \frac{\pi^2}{4\varepsilon\sqrt\kappa}$.

\begin{proof}

When $\kappa\leq 0$, this result follows from \citep[Theorem 6.3]{sturm03} which, using Jensen's inequality, shows that the barycenter function is contractive on $\mathcal{P}^1(M)$ equipped with the Wasserstein distance $W_1$. More precisely, for any probability measure $\mu\in\mathcal P^1(M)$, we denote by $B(\mu)$ its (unique) barycenter. Then, for all $\mu,\nu\in\mathcal P^1(M)$, 
$$\dist(B(\mu),B(\nu))\leq W_1(\mu,\nu)$$ where $W_1(\mu,\nu)=\inf_{X\sim\mu,Y\sim\nu}\E[d(X,Y)]$. Now, fix two $n$-uples $(x_1,\ldots,x_n)$ and $(y_1,\ldots,y_n)$ in $M^n$ and set $\mu=n^{-1}\sum_{i=1}^n \delta_{x_i}$ and $\nu=n^{-1}\sum_{i=1}^n \delta_{y_i}$, so $B(\mu)=\hat B_n(x_1,\ldots,x_n)$ and $B(\nu)=\hat B_n(y_1,\ldots,y_n)$. Then, $W_1(\mu,\nu)\leq \frac{1}{n}(d(x_1,y_1)+\ldots+d(x_n,y_n))$, which can be seen by taking the coupling $(X,Y)$ of $\mu$ and $\nu$ such that $P(X=a_i, Y=b_i)=\frac{1}{n}, i=1,\ldots,n$.

When $\kappa>0$, a different technique is needed to obtain the desired result: For instance, a similar technique, based on Kendall's function \citep{kendall1991convexity}, appears in \citep[Theorem 8]{gietl2024lipschitz} but, because $W_1$ is replaced with $W_2$, this technique only yields a Lipschitz constant of order $1/\sqrt n$ instead of $n$. Let $C$ be a convex domain included in a ball $B$ of radius $1/2(D_\kappa/2-\varepsilon)$ for some $\varepsilon>0$ and let $x_1,\ldots,x_n,y_1,\ldots,y_n\in C$. 
Denote by $F(x)=\frac{1}{n}\sum_{k=1}^n\dist(x,x_k)^2$ and $G(x)=\frac{1}{n}\sum_{k=1}^n\dist(x,y_i)^2$, for all $x\in M$. Let $b_n=\hat B_n(x_1,\ldots,x_n)$ and $b_n'=\hat B_n(y_1,\ldots,y_n)$ be the minimizers of $F$ and $G$, respectively. By Proposition~\ref{lem:varianceineq}, $b_n,b_n'$ are both contained in $C$, so by Proposition~\ref{prop:uniquegeodesics}, there is a unique geodesic $\gamma\in\Gamma_{b_n,b_n'}$. Denote by $\tilde\gamma$ the (unique) geodesic from $b_n'$ to $b_n$, given by $\tilde\gamma(t)=\gamma(1-t)$ for $0\leq t\leq 1$. 
Moreover, by Lemma~\ref{lem:d2strconvex}, the map $F\circ\gamma$ is $\alpha(\varepsilon,\kappa)\dist(b_n,b_n')^2$-strongly convex, yielding that 
$$(F\circ\gamma)^-(1)\geq (F\circ\gamma)^+(0)+\alpha\dist(b_n,b_n')^2$$
where $f^+$ (resp. $f^-$) stands for the right (resp. left) derivative of a function $f:[0,1]\to\R$, and $\alpha=\alpha(\varepsilon,\kappa)$. Moreover, since $F\circ\gamma$ is minimized at $t=0$, $(F\circ\gamma)^+(0)\geq 0$, yielding 
\begin{equation} \label{eqnew:1}
    (F\circ\gamma)^-(1)\geq \alpha\dist(b_n,b_n')^2.
\end{equation}
Since $G\circ\tilde\gamma$ is minimized at $t=0$, we have $(G\circ\gamma)^-(1)=-(G\circ\tilde\gamma)^+(0)\leq 0$, and \eqref{eqnew:1} becomes
\begin{align}
    \alpha\dist(b_n,b_n')^2 & \leq (F\circ\gamma)^-(1) \nonumber \\
    & = (G\circ\gamma)^-(1)+\frac{1}{n}\sum_{k=1}^n\left((\dist_{x_k}^2-\dist_{y_k}^2)\circ\gamma\right)^-(1) \nonumber \\
    & \leq \frac{1}{n}\sum_{k=1}^n\left((\dist_{x_k}^2-\dist_{y_k}^2)\circ\gamma\right)^-(1) \nonumber \\
    & = \frac{1}{n}\sum_{k=1}^n\left((\dist_{y_k}^2-\dist_{x_k}^2)\circ\tilde\gamma\right)^+(0) \nonumber
\end{align}
where, for all $z\in M$, we denote by $\dist_z=\dist(z,\cdot)$. 
Now, by the first variation formula \citep[Corollary II.3.6]{bridson2013metric}, we obtain 
\begin{align*}
    \alpha & \dist(b_n,b_n')^2 \\
    & \leq \frac{1}{n}\sum_{k=1}^n \left(2\dist(x_k,b_n')\dist(b_n,b_n')\cos(\angle_{b_n'}(x_k,b_n))-2\dist(y_k,b_n')\dist(b_n,b_n')\cos(\angle_{b_n'}(y_k,b_n))\right),
\end{align*}
that is,
\begin{equation*}
    \alpha\dist(b_n,b_n')\leq \frac{2}{n}\sum_{k=1}^n \left(\dist(x_k,b_n')\cos(\angle_{b_n'}(x_k,b_n))-\dist(y_k,b_n')\cos(\angle_{b_n'}(y_k,b_n))\right).
\end{equation*}
Here, we have denoted by $\angle_z(x,y)$ the Alexandrov angle between the geodesics starting at $z$ and going to $x$ and $y$ respectively, for all $x,y,z\in C$, as per \citep[Definition I.1.2]{bridson2013metric} (see also \citep[Proposition III.3.1]{bridson2013metric}). In the sequel, let us assume that $b_n'\neq b_n$, as otherwise there is nothing to prove.

Fix $k\in\{1,\ldots,n\}$. Let $T_{b_n'}M$ be the completion of the tangent cone to $M$ at $b_n'$. By \citep[Theorem II.3.19]{bridson2013metric}, this is a $\CAT(0)$ space. In $T_{b_n'}M$, let $\tilde\eta:[0,\infty)\to T_{b_n'}M$ be the geodesic ray starting at the origin and such that $\tilde\eta(\dist(b_n,b_n'))=\Log_{b_n'}(b_n)$ (so $\tilde\eta$ has unit speed in $T_{b_n'}M$). Now, denoting by $\mathfrak b$ the Busemann function associated with $\tilde\eta$ (see Definition~\ref{def:Busemann}), it is easy to see that $\dist(x_k,b_n')\cos(\angle_{b_n'}(x_k,b_n))=-\mathfrak b(\Log_{b_n'}(x_k))$. Similarly, $\dist(y_k,b_n')\cos(\angle_{b_n'}(y_k,b_n))=-\mathfrak b(\Log_{b_n'}(y_k))$ and we obtain
\begin{equation*}
    \alpha\dist(b_n,b_n')\leq \frac{2}{n}\sum_{k=1}^n \left(\mathfrak b(\Log_{b_n'}(y_k))-\mathfrak b(\Log_{b_n'}(x_k))\right).
\end{equation*}
Since $\mathfrak b$ is $1$-Lipschitz (this can be seen from the definition of Busemann functions, using the reverse triangle inequality), we therefore obtain
\begin{equation*}
    \alpha\dist(b_n,b_n')\leq \frac{2}{n}\sum_{k=1}^n \dist_{T_{b_n'}M}(\Log_{b_n'}(y_k),\Log_{b_n'}(x_k))
\end{equation*}
where $\dist_{T_{b_n'}M}$ is the distance on $T_{b_n'}M$. Finally, after noting that $\max(\dist(b_n',x_k),\dist(b_n',y_k))\leq \frac{D_\kappa}{2}=\frac{\pi}{2\sqrt\kappa}$, Lemma~\ref{lem:comp_dist} yields the result (with $r=\frac{\pi}{2\sqrt\kappa}$).



\end{proof}

We also define another family of barycenter functions, which can be computed iteratively. Fix a positive integer $n$ and consider again a convex domain $C$ of a $\CAT$ space $(M,\dist)$. Let $t=(t_2,\ldots,t_n)\in (0,1)^{n-1}$. For all $x_1,\ldots,x_n\in C$, we define $\tilde B_n^{(t)}(x_1,\ldots,x_n)$ iteratively by setting $\tilde b_1=x_1$ and, for all $k=2,\ldots,n$, $\tilde b_k=\gamma_k(t_k)$ where $\gamma_k$ is the unique geodesic from $\tilde b_{k-1}$ to $x_k$, and setting $\tilde B_n^{(t)}(x_1,\ldots,x_n):=\tilde b_n$. This construction was introduced by \citep{sturm03} for $\CAT(0)$ spaces with $t_k=1/k,\, k=2,\ldots,n$ and later studied, for instance, by \citep{ohtapalfia} in general $\CAT(\kappa)$ spaces for any $\kappa\in\R$. When $(M,\dist)$ is a Euclidean space, the choice $t_k=1/k, k=2,\ldots,n$ yields $\tilde B_n^{(t)}=\hat B_n$, that is to say, $\bar x_k=(1-1/k)\bar x_{k-1}+(1/k)x_k$ where $\bar x_k$ is the average of $x_1,\ldots,x_k$. However, note that in general, and for any choice of the sequence $t=(t_1,\ldots,t_n)$, $\tilde B_n^{(t)}\neq \hat B_n$. Moreover, in general, $\tilde B_n^{(t)}$ is not symmetric in its arguments: This iterative construction depends on the order of the points $x_1,\ldots,x_n$. Finally, note that $\tilde B_n^{(t)}$ can be interpreted as the outcome of a proximal descent algorithm for the numerical computation of $\hat B_n$. Indeed, for $k=2,\ldots,n$, it holds that 
$$
\tilde b_k= \underset{x\in M}{\mathrm{argmin}} \left(\dist(x,x_k)^2+\frac{1}{2\lambda_k}\dist(x,\tilde b_{k-1})^2\right), \quad \mathrm{for}\,\, \lambda_k=\frac{t_k}{2(1-t_k)}.
$$
In other words, $\tilde b_k$ is given by the resolvent of the map $\dist(\cdot,x_k)^2$ evaluated at $\tilde b_{k-1}$, see \citep{ohtapalfia} for more details. Hence, if $X_1,\ldots,X_n$ are i.i.d random variables supported in $C$, then $\tilde B_n^{(t)}(X_1,\ldots,X_n)$ is the output of the stochastic proximal descent algorithm with varying step sizes $\lambda_k=t_k/(2(1-t_k)),\, k=2,\ldots,n$.
The following result gives a sensitivity analysis of $\tilde B_n^{(t)}$ for $t=(1/2,\ldots,1/n)$ in $\CAT(\kappa)$ spaces for $\kappa\leq 0$.

\begin{theorem} \label{thm:baryarelip}
    Let $(M,\dist)$ be a $\CAT(\kappa)$ space with $\kappa\leq 0$, $n\geq 1$ and $t=(1/2,\ldots,1/n)$. The function $\tilde B_n^{(t)}$ is $1/n$-Lipschitz. 
\end{theorem}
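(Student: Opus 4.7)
The plan is to exploit the key property of $\CAT(\kappa)$ spaces with $\kappa\leq 0$ that the distance function is convex along pairs of geodesics. Namely, recall that since $\kappa\leq 0$, the space is in particular $\CAT(0)$, so for any two geodesics $\gamma,\eta:[0,1]\to M$ one has
\begin{equation*}
\dist(\gamma(t),\eta(t))\leq (1-t)\dist(\gamma(0),\eta(0))+t\dist(\gamma(1),\eta(1)),\qquad \forall t\in[0,1].
\end{equation*}
This is a classical consequence of the $\CAT(0)$ inequality (see, e.g., Proposition~II.2.2 of \cite{bridson2013metric}); note that it is also consistent with Lemma~\ref{lem:d2strconvex} and Proposition~\ref{prop:uniquegeodesics}, which guarantee uniqueness of the geodesics involved.

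Given $(x_1,\ldots,x_n),(y_1,\ldots,y_n)\in M^n$, I would denote by $\tilde b_k$ and $\tilde c_k$ the iterates produced by the algorithm when fed with these two sequences respectively, and set $D_k=\dist(\tilde b_k,\tilde c_k)$. By construction, $\tilde b_k$ lies at parameter $1/k$ on the unique geodesic from $\tilde b_{k-1}$ to $x_k$, and similarly for $\tilde c_k$. Applying the convexity property above to these two geodesics at $t=1/k$ yields
\begin{equation*}
D_k\leq \Bigl(1-\tfrac{1}{k}\Bigr)D_{k-1}+\tfrac{1}{k}\dist(x_k,y_k).
\end{equation*}

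Multiplying through by $k$ and setting $a_k=kD_k$ turns this into the telescoping relation $a_k\leq a_{k-1}+\dist(x_k,y_k)$, with $a_1=D_1=\dist(x_1,y_1)$. A straightforward induction then gives
\begin{equation*}
nD_n=a_n\leq \sum_{k=1}^{n}\dist(x_k,y_k)=\dist_1^{(n)}\bigl((x_1,\ldots,x_n),(y_1,\ldots,y_n)\bigr),
\end{equation*}
which is exactly the $1/n$-Lipschitz bound we want. The argument is essentially mechanical once the geodesic convexity of the distance is invoked; there is no serious obstacle here, since the nonpositive curvature assumption is precisely what makes this convexity available without any radius restriction on $C$, and the special choice $t_k=1/k$ is what makes the recursion telescope cleanly after scaling.
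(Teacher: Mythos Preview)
Your proof is correct and follows essentially the same approach as the paper, which defers to \cite[Lemma 3.1]{Funano10} and describes it as a straightforward induction on $n$. The key input is precisely the geodesic convexity of the distance in $\CAT(0)$ spaces that you invoke, and the telescoping via $a_k=kD_k$ is the clean way to carry out that induction.
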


The proof of this theorem, available in \citep[Lemma 3.1]{Funano10}, is straightforward and proceeds by induction on $n$. However, we do not know how to show an analogous result in $\CAT(\kappa)$ spaces with $\kappa>0$.

\begin{openquestion}
    Given a $\CAT(\kappa)$ space $(M,\dist)$ with $\kappa>0$, a convex domain $C\subseteq M$ and a positive integer $n$, is there a non-trivial choice of step sizes $t\in (0,1)^{n-1}$ such that the iterated barycenter function $\tilde B_n^{(t)}$ is $L/n$-Lipschitz on $C^n$ for some $L>0$ that only depends on $\kappa$ and $C$?
\end{openquestion}

Of course, the choice of the step sizes should also be consistent with that of Theorem~\ref{thm:llnvariance} below, in order to keep our statistical guarantee in expectation, while also being able to prove a high probability bound, see Section~\ref{sec:Hpb}.

\section{The basics of the concentration of measure in metric spaces}\label{sec:Laplace}

The concentration of measure phenomenon was highlighted in the 1970's by V. Milman in the context of the asymptotics of Banach spaces. It was then very studied through its deep connections with a lot of mathematical objects, such as isoperimetry, Markov relaxation time, spectrum of  diffusion operators and large deviation theory to mention just a few. It is also understood in physics as the self-averaging property, i.e., the property for a random physical quantity to behave deterministically at a macroscopic level, when the number of particles tends to infinity. That is in agreement with the mathematical intuition that a metric measure space concentrates well if the Lipschitz functions over it are almost constant in the measure theoretic sense. Among other tools to handle the concentration phenomenon, such as concentration functions, expansion coefficients, or the observable diameter (see, e.g., \citep{Funano10}), we have chosen to underline the use of the Laplace transform in measure metric spaces.
In this section, $(M,\dist)$ is a metric space and $\mathcal F$ is the collection of $1$-Lipschitz functions $f:M\to\R$, that is, satisfying $|f(x)-f(y)|\leq \dist(x,y)$ for all $x,y\in M$.

\subsection{Laplace transform}

Let $X$ be a random variable in $M$ with at least one moment. It is clear that $f(X)$ also has one moment, for all $f\in\mathcal F$. Following \citep[Section 1.6]{Ledouxconcentration}, we define the Laplace transform of $X$ as 
\begin{equation}\label{eq:defLaplace}
\Lambda_X(\lambda) := \sup_{f\in \mathcal F} \E[e^{\lambda(f(X)-\E[f(X)])}], \quad \lambda\in\R.
\end{equation}

By symmetry of the class $\mathcal F$, i.e. ($f\in\mathcal F\iff -f\in\mathcal F$), $\Lambda_X$ is an even function and one can simply study it for $\lambda\geq 0$.
Before expanding on the use of this definition, let us review some properties that will be important in the sequel. Recall that for all integers $n\geq 2$, we equip the product space $M^n$ with the $\ell^1$-product distance defined as $\dist_1^{(n)}((x_1,\ldots,x_n),(y_1,\ldots,y_n))=\dist(x_1,y_1)+\ldots+\dist(x_n,y_n)$. 

\begin{lemma}[Tensorization property ] \label{produitdelaplace}
If $X_1,\ldots,X_n$ are independent random variables on $(M,\dist )$ with at least one moment, then the Laplace transform of the random vector $(X_1,\ldots,X_n)$ in the product space $(M^n,\dist_1^{(n)})$  satisfies $$\Lambda_{(X_1,\ldots,X_n)} \leq \Lambda_{X_1}\cdots \Lambda_{X_n}.$$
\end{lemma}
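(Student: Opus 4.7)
The plan is to prove the inequality by induction on $n$, peeling off one variable at a time through conditional expectations; the base case $n=1$ is tautological. For the inductive step, fix a 1-Lipschitz function $F:M^n\to\R$ with respect to $\dist_1^{(n)}$; the final bound will follow by taking the supremum over such $F$ at the end. We may assume $\Lambda_{X_i}(\lambda)<\infty$ for every $i$, otherwise the inequality is trivial.

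The key auxiliary object is the partially-integrated function $\phi(x_1,\ldots,x_{n-1}) := \E[F(x_1,\ldots,x_{n-1},X_n)]$. Two properties drive the argument. First, pulling the absolute value inside the expectation and using the $\ell^1$-Lipschitz property of $F$ shows that $\phi$ is 1-Lipschitz on $(M^{n-1},\dist_1^{(n-1)})$. Second, for each fixed $(x_1,\ldots,x_{n-1})$, the section $t\mapsto F(x_1,\ldots,x_{n-1},t)$ is 1-Lipschitz on $(M,\dist)$. These two facts are precisely what the $\ell^1$-structure of the product distance buys us, and they are the conceptual heart of the proof.

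With these in hand, I would decompose
$$\lambda\bigl(F(X_1,\ldots,X_n)-\E F\bigr) = \lambda\bigl(F(X_1,\ldots,X_n)-\phi(X_1,\ldots,X_{n-1})\bigr) + \lambda\bigl(\phi(X_1,\ldots,X_{n-1})-\E \phi\bigr),$$
using Fubini to identify $\E F$ with $\E \phi(X_1,\ldots,X_{n-1})$. Taking expectations and conditioning on $(X_1,\ldots,X_{n-1})$, the independence of $X_n$ together with the 1-Lipschitz section property shows that the inner conditional expectation of $e^{\lambda(F-\phi(X_1,\ldots,X_{n-1}))}$ is bounded by $\Lambda_{X_n}(\lambda)$, a deterministic constant that factors out. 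Applying the inductive hypothesis to the 1-Lipschitz function $\phi$ and the independent variables $X_1,\ldots,X_{n-1}$ closes the induction, and taking the supremum over $F$ yields the claimed tensorization inequality.

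I do not anticipate serious obstacles: all the analytic steps (Fubini, conditioning, Lipschitz bounds) are routine. The only point requiring care is the verification that the $\ell^1$-product distance is exactly the right metric on $M^n$ for both the partial integral $\phi$ and the one-dimensional sections of $F$ to inherit the 1-Lipschitz constant from $F$; it is this compatibility that makes the final bound multiplicative rather than, say, polynomial in $n$, and which would fail for an $\ell^p$-product distance with $p>1$.
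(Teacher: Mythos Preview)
Your proof is correct. The paper does not actually provide its own proof of this lemma; it states the result and implicitly defers to \cite[Section~1.6]{Ledouxconcentration}, where precisely this induction/conditioning argument appears. Your write-up is the standard one, and the two points you single out---that the $\ell^1$-product metric makes both the partial integral $\phi$ and every one-variable section of $F$ inherit the $1$-Lipschitz constant---are exactly what the argument hinges on.
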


\begin{lemma}[Composition with Lipschitz functions] \label{lemma:LipLap}
    Let $(M^{(1)},d^{(1)})$ and $(M^{(2)},d^{(2)})$ be metric spaces and $\Phi:M^{(1)}\to M^{(2)}$ be a $L$-Lipschitz function, where $L>0$. Then, for all random variables $X$ in $M^{(1)}$ with at least one moment,
    $$\Lambda_{\Phi(X)}(\lambda)\leq \Lambda_X(\lambda L), \quad \forall \lambda\geq 0.$$
\end{lemma}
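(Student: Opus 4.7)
The plan is to unfold the definition of the Laplace transform on the left-hand side and use the Lipschitz property of $\Phi$ to recognize the resulting quantity as (dominated by) the Laplace transform of $X$ evaluated at $\lambda L$. By definition,
$$\Lambda_{\Phi(X)}(\lambda) = \sup_{g\in\mathcal{F}^{(2)}} \E\bigl[e^{\lambda(g(\Phi(X)) - \E[g(\Phi(X))])}\bigr],$$
where $\mathcal{F}^{(2)}$ denotes the class of $1$-Lipschitz functions on $(M^{(2)},d^{(2)})$. For any $g\in\mathcal{F}^{(2)}$, the composition $g\circ\Phi$ is $L$-Lipschitz on $(M^{(1)},d^{(1)})$, so that $f := (g\circ\Phi)/L$ lies in $\mathcal{F}^{(1)}$, the class of $1$-Lipschitz functions on $M^{(1)}$.

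Rewriting the expectation in terms of $f$ yields
$$\E\bigl[e^{\lambda(g(\Phi(X)) - \E[g(\Phi(X))])}\bigr] = \E\bigl[e^{\lambda L(f(X) - \E[f(X)])}\bigr],$$
and taking the supremum over $g\in\mathcal{F}^{(2)}$ only runs through a subfamily of functions in $\mathcal{F}^{(1)}$ (namely those of the form $(g\circ\Phi)/L$). Hence
$$\Lambda_{\Phi(X)}(\lambda) \leq \sup_{f\in\mathcal{F}^{(1)}} \E\bigl[e^{\lambda L(f(X) - \E[f(X)])}\bigr] = \Lambda_X(\lambda L),$$
which gives the claim. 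There is no real obstacle here; the only thing to verify is that $g\circ\Phi$ indeed has one moment so that the expectations above are well-defined, which follows from the $L$-Lipschitz bound $|g(\Phi(x)) - g(\Phi(x_0))| \leq L\, d^{(1)}(x,x_0)$ and the one-moment assumption on $X$.
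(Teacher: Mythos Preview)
Your proof is correct and follows essentially the same approach as the paper: both fix a $1$-Lipschitz function on $M^{(2)}$, observe that its composition with $\Phi$ divided by $L$ is $1$-Lipschitz on $M^{(1)}$, rewrite the exponent accordingly, and bound by the supremum defining $\Lambda_X(\lambda L)$. Your added remark on the one-moment condition is a nice touch that the paper leaves implicit.
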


\begin{proof}
Let $f:M^{(2)}\to\R$ be a $1$-Lipschitz function. Then, for all $\lambda\geq 0$,
$$\E[e^{\lambda(f(\Phi(X))-\E[f(\Phi(X))])}] = \E[e^{\lambda L\frac{(f(\Phi(X))-\E[f(\Phi(X))])}{L}}] = \E[e^{\lambda L(g(X)-\E[g(X)])}]$$
where $g=(1/L)f\circ\Phi$ is a $1$-Lipschitz function. Hence, $\E[e^{\lambda(f(\Phi(X))-\E[f(\Phi(X))])}]\leq \Lambda_X(\lambda L)$ and one concludes by taking the supremum over all $1$-Lipschitz functions $f:M^{(2)}\to\R$.
\end{proof}

In the next two sections, we introduce two classes of random variables, based on an upper bound on their Laplace transform: Namely, sub-Gaussian and sub-Gamma random variables. In fact, we could introduce a whole family of such classes, e.g., Orlicz spaces. We restrict ourselves to these two families for simplicity, and because they are sufficient for our purposes which, here, are to extend Hoeffding and Berstein's inequalities to metric spaces.

\subsection{Sub-Gaussian random variables}\label{sec:subGauss}

In this section, we extend the notion of sub-Gaussian random variables, i.e., random variables in Euclidean spaces whose Laplace transform is bounded by that of a Gaussian variable, to metric spaces.

\begin{definition} \label{def:subGaussian}
	A random variable $X$ in $(M,\dist )$ is called $K^2$-sub-Gaussian ($K\geq 0$) if and only if $\Lambda_X(\lambda)\leq e^{\lambda^2K^2/2}$, for all $\lambda\in\R$.
\end{definition}

In other words, the random variable $X$ is $K^2$-sub-Gaussian if and only if $f(X)$ is $K^2$-sub-Gaussian for all $f\in\mathcal F$ (as per the standard definition for real random variables). 

\newpage
\begin{remark}\,
    \begin{itemize}
        \item Definition~\ref{def:subGaussian} is stronger than the standard definition of sub-Gaussian random variables in Euclidean spaces. Indeed, if $X$ is a random variable in $\R^p$ ($p\geq 1$), $X$ is said to be $K^2$-sub-Gaussian in the Euclidean, standard sense, if it satisfies Definition \ref{def:subGaussian} only with linear $1$-Lipschitz functions (see \citep[Section 2.5]{vershynin2018high}), that is,
        $$\E[e^{\lambda u^\top (X-\E X)}]\leq e^{\frac{\lambda^2 K^2}{2}}$$ 
        for all unit vectors $u\in\R^p$ and all $\lambda\in\R$.
        In order to see that Definition~\ref{def:subGaussian} is indeed stronger in Euclidean spaces, consider a random variable $X$ of the form $X=YZ$ where $Y$ has the standard Gaussian distribution in $\R^p$ ($p\geq 1$) and $Z$ be a Bernoulli random variable independent of $Y$ with $P(Z=0)=P(Z=1)=1/2$. Set $X=YZ$. One can easily verify that for all unit vectors $u\in\R^p$, $u^\top X$ is $1$-sub-Gaussian. However, there are $1$-Lipschitz functions $f:\R^p\to\R$ for which $f(X)$ is not $1$-sub-Gaussian. For instance, simply take $f=\|\cdot\|$ (Euclidean norm in $\R^p$). If $\|X\|$ was $K^2$-sub-Gaussian for some $K>0$, then it would necessarily hold that 
        $$P(\|X\|<\E[\|X\|]-\sqrt{p}/4)\leq e^{-p/(32K^2)}.$$
        However, since $\E[\|X\|]$ is approximately $\sqrt{p}/2$, when $p$ is large, it holds that the latter probability is at least $1/2$, which yields a contradiction if the dimension $p$ is much larger than $K^2$. 
        \item On the other hand, let us point out that if a random vector $X=(X_1,\ldots,X_p)$  in $\R^p$, with i.i.d coordinates, is $K^2$-sub-Gaussian in the usual sense ($K>0$), then it is $CpK^2$-sub-Gaussian in the sense of Definition \ref{def:subGaussian}, for some unversal constant $C>0$. Indeed, using \citep[Proposition 2.5.2 (d)]{vershynin2018high}, let us simply check that for all $1$-Lipschitz functions $f:M\to\R$, 
        $$\E\left[e^{\frac{1}{CpK^2}(f(X)-\E[f(X)])^2}\right]\leq 2$$
        if $C$ is chosen large enough, independently of $p$ and $K$. Let $Y$ be an independent copy of $X$. Then,
        \begin{align*}
            \E\left[e^{\frac{1}{CpK^2}(f(X)-\E[f(X)])^2}\right] & \leq \E\left[e^{\frac{1}{CpK^2}(f(X)-f(Y))^2}\right] \leq \E\left[e^{\frac{1}{CpK^2}\|X-Y\|^2}\right] \\
            & \leq \E\left[e^{\frac{4}{CpK^2}\|X-\E X\|^2}\right] = \left( \E\left[e^{\frac{4}{CpK^2}(X_1-\E X_1)^2}\right] \right)^{p} \\ 
            & \leq \left(e^{\frac{4}{CpK^2}K^2}\right)^p = e^{\frac{4}{C}} \leq 2
        \end{align*}
    for $C=4/\log(2)$. Here, the first inequality follows Jensen's inequality and the third one follows the triangle inequality. The fourth inequality is a consequence of \citep[Proposition 2.5.2 (c)]{vershynin2018high}, using the fact that $X_1$ is $K^2$-sub-Gaussian. 
    \item Definition~\ref{def:subGaussian} is perhaps the most canonical extension of the standard definition of sub-Gaussian random variables and it bears a deep connection with transportation inequalities. Indeed, Bobkov-Götze theorem (see \citep[Theorem 1.3]{bobkov1999exponential}) ensures that Definition \ref{def:subGaussian} is equivalent to the following transport-cost inequality $$ W_1\left(\mu,\nu \right) \leq K\sqrt{2\int_M \log\left(\frac{\diff\nu}{\diff\mu}\right)\diff\nu},$$ for all probability measures $\nu$ that are absolutely continuous with respect to $\mu$, and where $\mu$ is the probability distribution of $X$.
    
    \end{itemize}
\end{remark}

The following lemma is a straightforward generalization of the concentration properties of Euclidean sub-Gaussian distributions \citep[Proposition 2.5.2]{vershynin2018high}.

\begin{lemma}\label{lemmclassiquesousGauss}
Let $X$ be a random variable in $(M,\dist )$ and let $K>0$. The following statements are equivalent:
\begin{enumerate}
	\item $X$ is $K^2$-sub-Gaussian (in the sense of Definition~\ref{def:subGaussian})
	\item $f(X)$ is $K^2$-sub-Gaussian, for all $f\in\mathcal F$ (in the standard sense)
	\item $\sup_{f\in\mathcal F} P(f(X)-\E[f(X)]\geq t) \leq e^{-t^2/(2K^2)}$, for all $t\geq 0$.
\end{enumerate}
Moreover, the following implications hold: 
\begin{itemize}
    \item If $X$ is $K^2$-sub-Gaussian, then $\sup_{f\in\mathcal F}\E\left[e^{\frac{(f(X)-\E[f(X)])^2}{9K^2}}\right]\leq 2$.
    \item If $\sup_{f\in\mathcal F}\E\left[e^{\frac{(f(X)-\E[f(X)])^2}{2K^2}}\right]\leq 2$, then $X$ is $K^2$-sub-Gaussian.
\end{itemize}
\end{lemma}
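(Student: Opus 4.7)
The plan is to establish the three equivalences $(i) \Leftrightarrow (ii) \Leftrightarrow (iii)$ by pushing the supremum over $\mathcal{F}$ through standard real-valued sub-Gaussian arguments, and then to handle the two exponential-moment implications separately with tailored computations for the constants $9K^2$ and $2K^2$. The equivalence $(i) \Leftrightarrow (ii)$ is essentially tautological: unpacking the definition $\Lambda_X(\lambda) = \sup_{f \in \mathcal{F}} \E[e^{\lambda(f(X) - \E[f(X)])}]$ from \eqref{eq:defLaplace}, the bound $\Lambda_X(\lambda) \leq e^{\lambda^2 K^2/2}$ for every $\lambda \in \R$ is precisely the statement that each $f(X)$, $f \in \mathcal{F}$, is $K^2$-sub-Gaussian in the usual real-valued sense.

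For $(ii) \Rightarrow (iii)$, I would fix $f \in \mathcal{F}$, set $Y = f(X) - \E[f(X)]$, and apply the Chernoff bound: $P(Y \geq t) \leq \inf_{\lambda \geq 0}e^{-\lambda t + \lambda^2 K^2/2} = e^{-t^2/(2K^2)}$, which is uniform in $f$. For the converse $(iii) \Rightarrow (ii)$, the essential observation is that $\mathcal{F}$ is closed under $f \mapsto -f$, which upgrades the one-sided tail bound to the two-sided estimate $P(|Y| \geq t) \leq 2 e^{-t^2/(2K^2)}$. From this, moment bounds of the form $\E[|Y|^{2k}] \leq 2 k!(2K^2)^k$ follow by the tail-integral representation $\E[|Y|^p] = p \int_0^\infty t^{p-1} P(|Y| \geq t)\diff t$ together with a change of variables. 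Plugging these moments into the Taylor series of the MGF and using the centering identity $\E[Y] = 0$ to discard the linear term then recovers the sub-Gaussian MGF estimate.

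For the two supplementary implications, the first is obtained by expanding $\E[\exp(Y^2/(9K^2))] = \sum_{k \geq 0} \E[Y^{2k}]/(k!(9K^2)^k)$, substituting $\E[Y^{2k}] \leq 2 k!(2K^2)^k$, and summing the resulting geometric series, with the constant $9$ chosen precisely so that the sum is at most $2$. The second follows from the elementary AM-GM inequality $\lambda Y \leq \lambda^2 K^2/2 + Y^2/(2K^2)$: taking expectations bounds $\E[e^{\lambda Y}]$ by $2 e^{\lambda^2 K^2/2}$, and the stray factor of $2$ is absorbed via a short symmetrization step (introducing an independent copy $Y'$ and using $\E[Y] = 0$ together with Jensen's inequality).

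The main obstacle is tracking the numerical constants carefully: the qualitative equivalences are classical (see \cite[Proposition~2.5.2]{vershynin2018high}) and rely on no non-linear structure of $M$, but the sharp constants $9$ and $2$ require precise bookkeeping in both the moment-series step and the AM-GM step. The only genuinely new ingredient relative to the real-valued case is the use of the closure of $\mathcal{F}$ under negation, which allows every step of the argument to commute with the supremum over $\mathcal{F}$ because each $f(X)$ is handled pointwise.
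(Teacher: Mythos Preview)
The paper does not give its own proof of this lemma: it simply says that it is ``a straightforward generalization of the concentration properties of Euclidean sub-Gaussian distributions \cite[Proposition~2.5.2]{vershynin2018high}'' and moves on. Your outline is exactly what that reference unpacks to --- real-valued sub-Gaussian equivalences applied to each $f(X)$ and then a supremum over $\mathcal F$, using only that $\mathcal F$ is closed under $f\mapsto -f$ --- so at the level of strategy you are aligned with the paper.

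There is, however, a genuine gap in your handling of the sharp constants. In the second bullet you obtain $\E[e^{\lambda Y}]\le 2e^{\lambda^2K^2/2}$ from $\lambda Y\le \lambda^2K^2/2+Y^2/(2K^2)$ and then assert that ``a short symmetrization step'' removes the factor $2$. It does not: Jensen gives $\E[e^{\lambda Y}]\le \E[e^{\lambda(Y-Y')}]$, which goes the \emph{wrong} way, and no manipulation of $Y-Y'$ recovers the exact constant $K^2$. In fact the implication with the stated constant is false already on $M=\R$: if $Y=\pm a$ with equal probability and $a=K\sqrt{2\ln 2}$, then $\E[e^{Y^2/(2K^2)}]=2$, yet for small $\lambda$ one has $\cosh(\lambda a)\approx 1+\lambda^2 K^2\ln 2 > 1+\lambda^2K^2/2\approx e^{\lambda^2K^2/2}$. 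The same example shows that $(iii)\Rightarrow(i)$ with the \emph{same} $K$ fails (your moment-to-MGF step will necessarily lose a constant). Your computation for the first bullet with the constant $9$ is fine: the moment bound $\E[Y^{2k}]\le 2k!(2K^2)^k$ gives $\E[e^{Y^2/(9K^2)}]\le 1+2\sum_{k\ge 1}(2/9)^k=11/7\le 2$.

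In short: your plan is the right one and matches what the paper has in mind, but the lemma as literally stated is slightly optimistic about constants in the directions $(iii)\Rightarrow(i)$ and in the second bullet, and your proposal does not (and cannot) close that gap. Since the paper only invokes the second bullet in a context where $K$ is taken ``sufficiently large'', the constant loss is harmless there; you should simply flag that your argument yields $cK^2$-sub-Gaussianity for an absolute $c>1$, in line with Vershynin's statement.
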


For the sake of completeness, in the next two lemmas, we will describe the preservation of the sub-Gaussian property by tensorization and Lipschitz transformations.

\begin{proposition}[Tensorization]\label{produitsousgaussien}
Let $X_1,\ldots,X_n$ be independent random variables in $M$ such that each $X_i$ is $K_i^2$-sub-Gaussian for some $K_i>0$. Then, the $n$-uple $(X_1,\ldots,X_n)$ is $(K_1^2+\ldots+K_n^2)$-sub-Gaussian on the product metric space $(M^n, \dist_1^{(n)})$.
\end{proposition}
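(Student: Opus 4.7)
The plan is to combine two tools already developed in this section: the tensorization property of the Laplace transform (Lemma~\ref{produitdelaplace}) applied on the product metric space $(M^n,\dist_1^{(n)})$, and the defining sub-Gaussian bound on each $\Lambda_{X_i}$. The key observation is that the definition of sub-Gaussianity (Definition~\ref{def:subGaussian}) is phrased purely in terms of the Laplace transform $\Lambda$, so transferring a product-of-Laplace-transforms bound into a product-sub-Gaussianity bound is immediate.

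More precisely, I would first invoke the tensorization lemma to write, for every $\lambda\in\R$,
\[
\Lambda_{(X_1,\ldots,X_n)}(\lambda) \;\leq\; \prod_{i=1}^n \Lambda_{X_i}(\lambda).
\]
Then I would apply the hypothesis that each $X_i$ is $K_i^2$-sub-Gaussian, which by Definition~\ref{def:subGaussian} means $\Lambda_{X_i}(\lambda)\leq e^{\lambda^2 K_i^2/2}$. Substituting and summing the exponents gives
\[
\Lambda_{(X_1,\ldots,X_n)}(\lambda) \;\leq\; \prod_{i=1}^n e^{\lambda^2 K_i^2/2} \;=\; e^{\lambda^2 (K_1^2+\cdots+K_n^2)/2},
\]
which, again by Definition~\ref{def:subGaussian} (now on the metric space $(M^n,\dist_1^{(n)})$), is exactly the statement that $(X_1,\ldots,X_n)$ is $(K_1^2+\cdots+K_n^2)$-sub-Gaussian.

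There is essentially no obstacle here: both ingredients are already in place, and the only subtlety worth flagging is consistency of the ambient metric. Lemma~\ref{produitdelaplace} is stated with $\Lambda_{(X_1,\ldots,X_n)}$ understood as the Laplace transform on the product space equipped with $\dist_1^{(n)}$, and Definition~\ref{def:subGaussian} is metric-dependent through the class $\mathcal{F}$ of $1$-Lipschitz functions; both sides of the argument must therefore refer to $1$-Lipschitz functions with respect to $\dist_1^{(n)}$, and this is what the tensorization lemma already delivers. No further computation is required.
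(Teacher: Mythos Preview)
Your proof is correct and follows essentially the same approach as the paper: invoke the tensorization lemma for the Laplace transform, plug in the sub-Gaussian bound $\Lambda_{X_i}(\lambda)\leq e^{\lambda^2 K_i^2/2}$ for each factor, and multiply to obtain $\Lambda_{(X_1,\ldots,X_n)}(\lambda)\leq e^{\lambda^2(K_1^2+\cdots+K_n^2)/2}$. Your remark about the consistency of the metric on the product space is a helpful clarification but not an additional step.
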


\begin{proof}
Let $X_i$ be $K_i^2$-sub-Gaussian, for each $i=1,\ldots,n$. Then, $\Lambda_{X_i}(\lambda)\leq e^{\lambda^2 K_i^2/2}$, for all $i=1,\ldots,n$ and $\lambda\geq 0$. Therefore, by using Lemma~\ref{produitdelaplace}, 
$$\Lambda_{(X_1,\ldots,X_n)}(\lambda) \leq \Lambda_{X_1}(\lambda)\ldots \Lambda_{X_n}(\lambda) \leq \prod_{i=1}^n e^{\lambda^2 K_i^2/2} = e^{\lambda^2(K_1^2+\ldots+K_n^2)/2},$$
for all $\lambda\geq 0$, which yields the result.
\end{proof}

\begin{proposition}[Composition with Lipschitz functions]\label{lipsousgauss}
Let $(M^{(1)},d^{(1)})$ and $(M^{(2)},d^{(2)})$ be metric spaces and $X$ be a random variables taking values in $M^{(1)}$. Let $K,L>0$. If $X$ is $K^2$-sub-Gaussian and $\Phi:M^{(1)}\to M^{(2)}$ is $L$-Lipschitz, then $\Phi(X)$ is $(L^2K^2)$-sub-Gaussian.
\end{proposition}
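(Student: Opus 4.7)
The plan is to combine Lemma~\ref{lemma:LipLap} (composition with Lipschitz functions for the Laplace transform) with the definition of a $K^2$-sub-Gaussian random variable. Both ingredients are already available, so the proof will be essentially a one-line computation; there is no serious obstacle.

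First, I would fix $\lambda \in \R$. By Lemma~\ref{lemma:LipLap} applied to the $L$-Lipschitz map $\Phi: M^{(1)} \to M^{(2)}$, we have
\[
\Lambda_{\Phi(X)}(\lambda) \leq \Lambda_X(\lambda L).
\]
Next, since $X$ is $K^2$-sub-Gaussian in the sense of Definition~\ref{def:subGaussian}, we can bound the right-hand side as
\[
\Lambda_X(\lambda L) \leq e^{(\lambda L)^2 K^2/2} = e^{\lambda^2 (L^2 K^2)/2}.
\]
Combining the two estimates yields $\Lambda_{\Phi(X)}(\lambda) \leq e^{\lambda^2 (L^2 K^2)/2}$ for all $\lambda \in \R$ (using that $\Lambda$ is even, or equivalently applying the same argument for $\lambda \geq 0$ and invoking evenness), which is precisely the condition that $\Phi(X)$ is $(L^2 K^2)$-sub-Gaussian.

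The only small point to double-check is that Lemma~\ref{lemma:LipLap} is stated for $\lambda \geq 0$; but by the even symmetry of $\Lambda_X$ and $\Lambda_{\Phi(X)}$ noted right after \eqref{eq:defLaplace}, the inequality extends automatically to all $\lambda \in \R$. No further effort is needed.
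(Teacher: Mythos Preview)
Your proof is correct and follows exactly the same approach as the paper's own proof: apply Lemma~\ref{lemma:LipLap} and then the sub-Gaussian bound on $\Lambda_X$. Your added remark about evenness of $\Lambda$ to pass from $\lambda\geq 0$ to all $\lambda\in\R$ is a harmless clarification; the paper simply writes the one-line chain for $\lambda\geq 0$.
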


\begin{proof}
By using Lemma~\ref{lemma:LipLap}, for all $\lambda\geq 0$, $\Lambda_{\Phi(X)}(\lambda)\leq \Lambda_X(\lambda L) \leq e^{\lambda^2L^2K^2/2}$.
\end{proof}

\vspace{-2mm}

Let us conclude this section with two lemmas, which provide important examples of sub-Gaussian random variables. The first one is from \citep{Ledouxconcentration}; Similar to Hoeffding's lemma for real-valued random variables, it indicates that bounded random variables are always sub-Gaussian.

\begin{lemma}\citep[Proposition 1.16]{Ledouxconcentration} \label{lemma:Ledoux} \label{boundedsubgauss}
Let $X$ be a bounded random variable in the metric space $(M,\dist )$, i.e. $d(x_0,X)\leq C$ a.s. for some $x_0\in M$ and $C>0$. Then, $X$ is $4C^2$-sub-Gaussian. 
\end{lemma}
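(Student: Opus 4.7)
The plan is to reduce the sub-Gaussian property of $X$ on $M$ to a real-valued bound, via a standard symmetrization argument applied after fixing an arbitrary $1$-Lipschitz test function.

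First, I fix an arbitrary $f\in\mathcal F$ and $\lambda\in\R$, and let $Y$ be an independent copy of $X$. By Jensen's inequality applied conditionally on $X$, using $\E[f(Y)]=\E[f(X)]$,
\begin{equation*}
    \E\bigl[e^{\lambda(f(X)-\E[f(X)])}\bigr]=\E\bigl[e^{\lambda(f(X)-\E[f(Y)])}\bigr]\leq \E\bigl[e^{\lambda(f(X)-f(Y))}\bigr].
\end{equation*}
The random variable $f(X)-f(Y)$ is symmetric (since $(X,Y)$ and $(Y,X)$ have the same joint distribution), so the moment generating function on the right equals $\E[\cosh(\lambda(f(X)-f(Y)))]$.

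Next, I use the elementary inequality $\cosh(t)\leq e^{t^2/2}$ valid for all $t\in\R$, so that
\begin{equation*}
    \E\bigl[\cosh(\lambda(f(X)-f(Y)))\bigr]\leq \E\bigl[e^{\lambda^2(f(X)-f(Y))^2/2}\bigr].
\end{equation*}
Now I control $|f(X)-f(Y)|$ pointwise: since $f$ is $1$-Lipschitz and the boundedness assumption gives $\dist(X,x_0)\leq C$ and $\dist(Y,x_0)\leq C$ almost surely, the triangle inequality yields
\begin{equation*}
    |f(X)-f(Y)|\leq \dist(X,Y)\leq \dist(X,x_0)+\dist(x_0,Y)\leq 2C\quad\text{a.s.}
\end{equation*}
Substituting this deterministic bound,
\begin{equation*}
    \E\bigl[e^{\lambda^2(f(X)-f(Y))^2/2}\bigr]\leq e^{\lambda^2(2C)^2/2}=e^{\lambda^2\cdot 4C^2/2}.
\end{equation*}
Combining the inequalities shows that $\E[e^{\lambda(f(X)-\E[f(X)])}]\leq e^{\lambda^2(4C^2)/2}$ for every $f\in\mathcal F$ and $\lambda\in\R$; taking the supremum over $f\in\mathcal F$ gives $\Lambda_X(\lambda)\leq e^{\lambda^2(4C^2)/2}$, which is exactly the definition of $X$ being $4C^2$-sub-Gaussian.

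There is no real obstacle here: the proof is entirely mechanical once the symmetrization trick is invoked, and the only nontrivial ingredient is the metric-space bound $|f(X)-f(Y)|\leq \dist(X,Y)\leq 2C$, which is immediate from $1$-Lipschitzness of $f$ plus the triangle inequality. The constant $4C^2$ is not sharp (one could obtain $C^2$ via a direct application of Hoeffding's lemma to the real-valued bounded variable $f(X)\in[f(x_0)-C,f(x_0)+C]$), but the symmetrization route is conceptually the cleanest and matches the stated constant.
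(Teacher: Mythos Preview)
Your proof is correct. The paper does not supply its own argument for this lemma but simply cites \cite[Proposition 1.16]{Ledouxconcentration}, and the symmetrization route you take (independent copy, Jensen, $\cosh(t)\leq e^{t^2/2}$, then the pointwise bound $|f(X)-f(Y)|\leq \dist(X,Y)\leq 2C$) is exactly Ledoux's original proof of that proposition.
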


A second example of sub-Gaussian distribution can be constructed by designing a density with sufficient decay on a Riemannian manifold. 

\begin{lemma} \label{lemma:subG2}
    Let $M$ be a Riemannian manifold and $\dist$ be the corresponding Riemannian distance. Let $N$ be the dimension of $M$ and assume that $M$ has Ricci curvature bounded from below by some $R\in\R$. Let $X$ be a random variable in $M$ with a density $\phi$ with respect to the Riemannian volume such that 
    \begin{equation} \label{eq:density_bound}
        \phi(x)\leq Ce^{-\beta\dist(x,x_0)^2}, \quad\forall x\in M
    \end{equation}
    where $C,\beta>0$ and $x_0\in M$ are fixed. Then, $X$ is $K^2$ sub-Gaussian for some $K>0$ that depends on $C,\beta,R$ and $N$. 
\end{lemma}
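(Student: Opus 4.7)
The plan is to verify the criterion of Lemma~\ref{lemmclassiquesousGauss}: we seek $K > 0$ such that $\sup_{f \in \mathcal F} \E\bigl[\exp\bigl((f(X)-\E[f(X)])^2/(2K^2)\bigr)\bigr] \leq 2$. The idea is to use $1$-Lipschitzness of $f$ to reduce this uniform-in-$f$ bound to a single exponential moment estimate on $\dist(X, x_0)^2$, and then to control that moment via the Gaussian tail of $\phi$ together with Bishop-Gromov volume comparison.

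First, I would strip the dependence on $f$. For any $f \in \mathcal F$, $|f(X) - f(x_0)| \leq \dist(X, x_0)$, so by Jensen's inequality $|\E[f(X)] - f(x_0)| \leq \E[\dist(X, x_0)] =: M_0$. Hence $|f(X) - \E[f(X)]| \leq \dist(X, x_0) + M_0$, and using $(a+b)^2 \leq 2a^2 + 2b^2$,
\[
\E\!\left[\exp\!\left(\frac{(f(X) - \E[f(X)])^2}{2 K^2}\right)\right] \leq e^{M_0^2/K^2}\, \E\!\left[\exp\!\left(\frac{\dist(X, x_0)^2}{K^2}\right)\right],
\]
with the right-hand side independent of $f$. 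The task thus reduces to showing $M_0 < \infty$ and that $\E[\exp(\dist(X, x_0)^2/K^2)]$ is finite (and in fact close to $1$) for $K$ large enough.

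Both quantities reduce, via the density bound $\phi(x) \leq C e^{-\beta \dist(x, x_0)^2}$ and polar coordinates around $x_0$, to controlling integrals of the form $\int_0^\infty e^{-(\beta - \alpha)r^2}\, s(r)\, dr$ (with $\alpha = 1/K^2$), where $s(r)$ is the volume of the geodesic sphere of radius $r$ in $M$. Since $\mathrm{Ric} \geq R$, the Bishop-Gromov volume comparison theorem bounds $s(r)$ by the corresponding sphere area in the $N$-dimensional simply-connected model space of constant sectional curvature $R/(N-1)$: this area is polynomial in $r$ when $R \geq 0$ and grows at most exponentially, at rate $(N-1)\sqrt{|R|/(N-1)}$, when $R < 0$. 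For any $\alpha \in (0, \beta)$, the Gaussian factor $e^{-(\beta-\alpha)r^2}$ dominates this growth and the integral is finite; an even easier estimate with a linear rather than quadratic factor yields $M_0 < \infty$.

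Finally, I would pick $K$. Since the right-hand side of the displayed inequality tends to $\int_M \phi \, dV_g = 1$ as $K \to \infty$ by dominated convergence, there exists $K = K(C, \beta, R, N)$ large enough that it is at most $2$. Lemma~\ref{lemmclassiquesousGauss} then yields that $X$ is $K^2$-sub-Gaussian. The main technical ingredient is the Bishop-Gromov estimate; the only subtlety is the routine polar-coordinate bookkeeping past the cut locus of $x_0$ (where $s(r)$ must be interpreted as the surface area of the image of the exponential map, which Bishop-Gromov still controls from above). Beyond that, the argument reduces to a straightforward Gaussian-integral computation.
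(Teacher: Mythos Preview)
Your proof is correct and follows essentially the same strategy as the paper: invoke the criterion of Lemma~\ref{lemmclassiquesousGauss}, reduce the uniform-in-$f$ bound to an exponential moment of $\dist(X,x_0)^2$, control that via the density bound together with Bishop--Gromov volume comparison, and then send $K\to\infty$. The only noteworthy difference is in how you strip the $f$-dependence: the paper symmetrizes with an independent copy $Y$ (Jensen gives $\E[e^{(f(X)-\E f(X))^2/(2K^2)}]\leq \E[e^{\dist(X,Y)^2/(2K^2)}]$, then the triangle inequality around $x_0$), whereas you use the direct pointwise bound $|f(X)-\E f(X)|\leq \dist(X,x_0)+M_0$; both lead to the same integral and the same conclusion.
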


A closed form for $K$ follows from the proof but we omit it here for simplicity. In fact, one does not need $M$ to be a Riemannian manifold in the previous lemma. Instead, assume that $(M,\dist)$ is a metric space that can be equipped with a reference measure $\mu$ such that the metric measure space $(M,\dist,\mu)$ satisfies the $(R,N)$-measure contraction property for some $R\in\R$ and $N>1$. This property generalizes the Ricci curvature lower bound and the dimension upper bound to abstract metric spaces. We refer the reader to \citep{ohta2007measure,sturmongeomI,sturmongeomII} for more details. In particular, any complete metric space with curvature bounded from below by $R$ in Alexandrov's sense (same definition as Definition~\ref{def:CAT}, but with reverse inequalities), equipped with its $N$-dimensional Hausdorff measure, satisfies the $((N-1)R,N)$-measure contraction property. An $N$-dimensional Riemannian manifold satisfies the $(K,N)$-measure contraction property if and only if its Ricci curvature is uniformly bounded from below by $R$. Now, the previous lemma can be extended to any metric measure space $(M,\dist,\mu)$ that satisfies the $(R,N)$-measure contraction property and any random variable $X$ in $M$ with density $\phi$ with respect to $\mu$, satisfying \eqref{eq:density_bound}.

\begin{lemma} 
	Let $(M,d,\mu)$ satisfying the $(R,N)$-measure contraction property for $K\in\R$ and $N>1$. Assume that $X$ is a random variable with value in $M$ and a density $\phi$ with respect to $\mu$, and such that 
	$$\phi(x)\leq Ce^{-\beta d(x,x_0)^2}, \forall x\in M,$$
for some given $C,\beta>0$ and $x_0\in M$. Then, $X$ is $K^2$-sub-Gaussian, for some $K>0$ that depends on $C, \beta$ and $K$.
\end{lemma}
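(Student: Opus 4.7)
The plan is to verify the second equivalent characterization in Lemma~\ref{lemmclassiquesousGauss}: it suffices to exhibit some $K_0>0$ (to avoid clashing with the MCP parameter, I temporarily rename the sub-Gaussian constant) such that every $1$-Lipschitz $f:M\to\R$ satisfies
$$\E\left[\exp\left(\frac{(f(X)-\E[f(X)])^2}{2K_0^2}\right)\right]\leq 2.$$
First I would introduce an independent copy $Y$ of $X$, push the inner expectation in by Jensen's inequality, apply the Lipschitz bound $|f(X)-f(Y)|\leq d(X,Y)$, and use the elementary inequality $d(X,Y)^2\leq 2d(X,x_0)^2+2d(Y,x_0)^2$ together with independence of $X$ and $Y$. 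This reduces matters to exhibiting $K_0$ for which
$$\E\left[\exp\left(\frac{d(X,x_0)^2}{K_0^2}\right)\right]\leq \sqrt{2}.$$

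Next I would integrate against the density. For any $\gamma\in (0,\beta)$, the hypothesis $\phi(x)\leq Ce^{-\beta d(x,x_0)^2}$ yields
$$\E[\exp(\gamma\,d(X,x_0)^2)]\leq C\int_M \exp(-(\beta-\gamma)\,d(x,x_0)^2)\,d\mu(x).$$
Writing $\eta=\beta-\gamma$ and applying the layer-cake formula in polar form around $x_0$ gives the clean expression
$$\int_M e^{-\eta d(x,x_0)^2}\,d\mu(x)=\int_0^\infty 2\eta u\, e^{-\eta u^2}\,\mu(B(x_0,u))\,du,$$
so the remaining question is purely about the volume growth of balls.

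This is where the $(K,N)$-measure contraction property enters, and it is really the only non-routine step. Under MCP$(K,N)$ one has a Bishop--Gromov type inequality giving $\mu(B(x_0,u))\leq V_{K,N}(u)\,\mu(B(x_0,1))$, where $V_{K,N}$ is the model volume function: polynomial of degree at most $N$ in $u$ when $K\geq 0$, and growing at most like $e^{(N-1)\sqrt{-K}\,u}$ when $K<0$ (see \cite{ohta2007measure,sturmongeomII}). Since $e^{-\eta u^2}$ dominates any exponential in $u$ for $u$ large, the integral above is finite for every $\eta>0$ and tends to $0$ as $\eta\to\infty$. Taking $\gamma=1/K_0^2$, choosing $K_0$ large enough forces $\eta=\beta-1/K_0^2$ large enough that $C\int_M e^{-\eta d(x,x_0)^2}d\mu(x)\leq \sqrt{2}$, and the reduction above completes the proof with a constant $K_0$ that depends only on $C$, $\beta$, $K$ and $N$ (through the Bishop--Gromov constants).

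The main obstacle is locating and invoking the correct MCP-based volume comparison; once $V_{K,N}$ is in hand the remainder is a one-dimensional Gaussian moment estimate. An explicit closed form for $K_0$ can be extracted by optimizing $\eta$ against the explicit $V_{K,N}$, but as in Lemma~\ref{lemma:subG2} I would omit it for readability.
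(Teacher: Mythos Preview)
Your reduction via an independent copy, the Lipschitz bound, and the density hypothesis is correct and matches the paper's approach. The appeal to Bishop--Gromov under MCP$(K,N)$ to control $\int_M e^{-\eta d(x,x_0)^2}\,d\mu(x)$ is also the right idea. However, there is a genuine gap in the final step.

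You write that ``choosing $K_0$ large enough forces $\eta=\beta-1/K_0^2$ large enough'' to make the integral at most $\sqrt{2}$. But $\eta=\beta-1/K_0^2$ is \emph{bounded above by $\beta$}: as $K_0\to\infty$, $\eta\to\beta^-$, not to $\infty$. So you cannot drive the integral below $C\int_M e^{-\beta d(x,x_0)^2}\,d\mu(x)$, and nothing in the hypotheses prevents this quantity from exceeding $\sqrt{2}$ (take $C$ large, for instance). The limit $\eta\to\infty$ that you invoke is simply not accessible through the parameter $K_0$.

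The paper closes this gap with a H\"older interpolation. Fix any $K_0>1/\sqrt\beta$, so that
\[
J:=\Big(C\int_M e^{-(\beta-1/K_0^2)d(x,x_0)^2}\,d\mu\Big)^2
\]
is finite (but possibly large). Then for $K\geq K_0$, H\"older's inequality gives
\[
\E\Big[e^{(f(X)-\E[f(X)])^2/(2K^2)}\Big]\leq \Big(\E\Big[e^{(f(X)-\E[f(X)])^2/(2K_0^2)}\Big]\Big)^{K_0^2/K^2}\leq J^{K_0^2/K^2},
\]
and the right-hand side tends to $1$ as $K\to\infty$, uniformly in $f\in\mathcal F$. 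Choosing $K$ large enough then yields the bound $\leq 2$. This interpolation is the missing idea; with it your argument goes through.
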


Here again, a closed form for $K$ can be deduced from the proof, but we do not make it explicit here, for the sake of the simplicity of our presentation. The proof of these two lemmas follow from Bishop-Gromov volume comparison. Let us briefly sketch the argument, while deferring the complete proof to Appendix \ref{appendix:proofMCP}.
If $K>0$, then $M$ has finite diameter \citep{ohta2007measure}, bounded from above by $D=\pi\sqrt{\frac{N-1}{K}}$. Hence, $X$ is bounded and, by Lemma~\ref{lemma:Ledoux}, it is $K^2$-sub-Gaussian, with $K^2=4D^2$.
Now, assume that $K\leq 0$ and let $f\in\mathcal F$. By Jensen's inequality, for all $K>0$, $\DS \E[e^{\frac{(f(X)-\E[f(X)])^2}{2K^2}}] \leq \E[e^{\frac{(f(X)-f(Y))^2}{2K^2}}]$, where $Y$ is independent of $X$ and has the same distribution. Therefore, 
\begin{align}
	\E[e^{\frac{(f(X)-\E[f(X)])^2}{2K^2}}] & \leq \E[e^{\frac{d(X,Y)^2}{2K^2}}] \leq \E\left[e^{\frac{d(X,x_0)^2+d(Y,x_0)^2}{K^2}}\right] = \left(\E e^{\frac{d(X,x_0)^2}{K^2}}\right)^2 \nonumber \\
	& \leq \left(C\int_M e^{-\left(\beta-\frac{1}{K^2}\right)d(x,x_0)^2} \diff\mu(x)\right)^2. \nonumber 
\end{align}
Now the idea is to take $K$ large enough to get that the last integral is less than $2$. This is at this point that we need control on the growth of balls, and in particular the MCP$(K,N)$ condition gives the following generalized Bishop-Gromov volume comparison (see \citep{ohta2007measure}) $\forall r\geq 0$, $\mu(B(x_0,r)) \leq \int_0^r \mathbf{s}_K\left( \frac{t}{\sqrt{N-1}}\right)^{N-1}\diff t$, with $\mathbf{s}_0(t)=t$ and $\mathbf{s}_K(t)=\frac{1}{\sqrt{-K}}\sinh (\sqrt{-K}t)$, $K<0$. The proof ends with the integral being controlled on balls of large diameters, using Bishop-Gromov inequality.

\subsection{Sub-Gamma random variables}

\begin{definition}
    Let $\sigma^2>0$ and $c>0$. A random variable $X$ in $(M,\dist)$ is called $(\sigma^2,c)$-sub-Gamma if and only if its Laplace transform satisfies
    $$\Lambda_X(\lambda)\leq e^{\frac{\lambda^2\sigma^2}{2(1-\lambda c)}}, \quad \forall \lambda\in (0,c^{-1}).$$
\end{definition}

In other words, $X$ is $(\sigma^2,c)$-sub-Gamma if and only if $f(X)$ is a $(\sigma^2,c)$-sub-Gamma real random variable, as per \citep[Section 2.4]{boucheron2003concentration}. The following lemma shows that bounded random variables are sub-Gamma.

\begin{lemma}
    Let $X$ be a random variable in $(M,\dist)$. Assume that $\dist(X,x_0)\leq R$ almost surely, for some $x_0\in M$ and $R>0$. Then, $X$ has a second moment and, by denoting $\tilde\sigma^2=(1/2)\E[\dist(X,X')^2]$, where $X'$ is an independent copy of $X$, it holds that $X$ is $(\tilde \sigma^2,R)$-sub-Gamma.
\end{lemma}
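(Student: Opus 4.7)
The plan is to reduce everything to a one-dimensional Bernstein-type estimate, applied to $f(X)$ for an arbitrary $1$-Lipschitz $f:M\to\R$, and then take the supremum over $f$ at the end. So I would fix such an $f$ and study the centered variable $Z=f(X)-\E[f(X)]$. The two inputs I need are a uniform bound and a variance bound. For the uniform bound: since $|f(X)-f(x_0)|\le \dist(X,x_0)\le R$ almost surely, Jensen's inequality gives $|\E[f(X)]-f(x_0)|\le R$, hence $|Z|\le 2R$ a.s. For the variance bound, an independent copy $X'$ of $X$ yields
\[
\var(f(X))=\tfrac{1}{2}\E[(f(X)-f(X'))^2]\le \tfrac{1}{2}\E[\dist(X,X')^2]=\tilde\sigma^2,
\]
using the Lipschitz property. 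In particular, $X$ has a second moment.

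Next, I would run the standard cumulant expansion for bounded centered real variables. Since $\E Z=0$, Taylor expansion of the exponential and the moment bound $|\E Z^k|\le (2R)^{k-2}\E Z^2\le (2R)^{k-2}\tilde\sigma^2$ for $k\ge 2$ give
\[
\E[e^{\lambda Z}]\;\le\;1+\frac{\tilde\sigma^2}{(2R)^2}\bigl(e^{2\lambda R}-1-2\lambda R\bigr).
\]
Combining $1+u\le e^u$ with the classical inequality $e^u-1-u\le \tfrac{u^2}{2(1-u/3)}$ valid on $[0,3)$, applied with $u=2\lambda R$, I obtain
\[
\E[e^{\lambda Z}]\;\le\;\exp\!\left(\frac{\lambda^2\tilde\sigma^2}{2(1-2\lambda R/3)}\right),\qquad \lambda\in(0,3/(2R)).
\]

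To close the gap with the required $(\tilde\sigma^2,R)$-sub-Gamma form, I would simply note that $2R/3\le R$ implies $1-2\lambda R/3\ge 1-\lambda R>0$ for $\lambda\in(0,1/R)$, so the displayed bound is dominated by $\exp\bigl(\lambda^2\tilde\sigma^2/(2(1-\lambda R))\bigr)$. Taking the supremum over $1$-Lipschitz $f:M\to\R$ in the defining formula \eqref{eq:defLaplace} yields the required bound on $\Lambda_X$, completing the proof.

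The only mildly subtle point is that the effective boundedness constant on $Z$ is $2R$ rather than $R$ (since we must center at $\E[f(X)]$, not at $f(x_0)$), so the \emph{literal} estimate coming out of the Bernstein moment method involves the constant $2R/3$ in the denominator. This costs nothing because the definition of $(\tilde\sigma^2,R)$-sub-Gamma has precisely enough slack to absorb it, but one has to remember to perform this final monotonicity step rather than stopping at the tighter constant.
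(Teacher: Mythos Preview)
Your proof is correct and follows essentially the same route as the paper: reduce to a one-dimensional Bernstein estimate for $Z=f(X)-\E f(X)$ with an arbitrary $1$-Lipschitz $f$, using the uniform bound $|Z|\le 2R$ and the variance bound $\var(f(X))\le\tilde\sigma^2$, then take the supremum over $f$. The only cosmetic difference is in how the series $\sum_{k\ge 2}\lambda^k(2R)^{k-2}/k!$ is controlled: the paper compares term by term via $2^{p-1}\le p!$ to land directly on $\frac{\lambda^2}{2(1-\lambda R)}$, whereas you first pass through the closed form $e^u-1-u\le \frac{u^2}{2(1-u/3)}$ with $u=2\lambda R$ and then relax $2R/3$ to $R$ by monotonicity. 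Both are standard and yield the same conclusion.
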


Note that $\tilde\sigma^2\leq 2\sigma^2$ by the triangle inequality, where $\sigma^2=\inf_{x\in M}\E[\dist(X,x)^2]$ is the total variance of $X$. Hence, $X$ is also $(2\sigma^2,R)$-sub-Gamma. In fact, the inequality $\tilde\sigma^2\leq 2\sigma^2$ is tight up to universal constants. Indeed, by letting $F=\E[\dist(X,\cdot)^2]$, $\tilde\sigma^2=(1/2)\E[F(X')]\geq (1/2)\inf_{x\in M}F(x)=\sigma^2/2$.

\begin{proof}
    Let $f:M\to\R$ be a $1$-Lipschitz function and set $Y=f(X)$. Let us check that $Y$ is $(\sigma^2,R)$-sub-Gamma. First, one can verify that $\var(Y)=\sigma^2$. Moreover, $Y$ is bounded, since $|Y-f(x_0)|=|f(X)-f(x_0)|\leq \dist(X,x_0)\leq R$ almost surely. Therefore, $|Y-\E[Y]|\leq 2R$ almost surely, so, for all integers $p\geq 2$, $\E[|Y-\E[Y]|^p]=\E[|Y-\E[Y]|^2|Y-\E[Y]|^{p-2}]\leq \sigma^2(2R)^{p-2}$. Hence, for all $\lambda\in (0,R^{-1})$, we obtain
    \begin{equation*}
        \E[e^{\lambda(Y-\E[Y])}] \leq 1+\sigma^2\sum_{p\geq 2}\frac{\lambda^p(2R)^{p-2}}{p!}
        \leq 1+\frac{\sigma^2\lambda^2}{2}\sum_{p\geq 0}\lambda^pR^p = 1+\frac{\lambda^2\sigma^2}{2(1-\lambda R)}
        \leq e^{\frac{\lambda^2\sigma^2}{2(1-\lambda R)}}
    \end{equation*}
    where we used the facts that $2^{p-2}\leq p!$ for all $p\geq 2$ and $1+u\leq e^u$ for all $u\geq 0$. 
\end{proof}

Now, we show similar properties of sub-Gamma random variables as for sub-Gaussian ones. The first one is a tail bound that can be found in \citep[Section 2.4]{boucheron2003concentration}.

\begin{lemma} \label{lem:subGammatail}
    If $X$ is $(\sigma^2,c)$-sub-Gamma for some $\sigma^2,c>0$, then for all $f\in\mathcal F$ and $\delta\in (0,1)$, the following holds with probability at least $1-\delta$:
    $$f(X)\leq \E[f(X)]+\sigma\sqrt{2\log(1/\delta)}+c\log(1/\delta).$$
\end{lemma}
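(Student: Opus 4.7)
The plan is to apply a standard Chernoff-Cramér argument: bound the tail of $f(X)-\E[f(X)]$ via its moment generating function and optimize the free parameter. Since $X$ is $(\sigma^2,c)$-sub-Gamma, Definition's bound on $\Lambda_X$ together with the fact that $f\in\mathcal F$ yields, for every $\lambda\in(0,c^{-1})$,
\begin{equation*}
\E\bigl[e^{\lambda(f(X)-\E[f(X)])}\bigr]\;\leq\;\Lambda_X(\lambda)\;\leq\;\exp\!\left(\frac{\lambda^2\sigma^2}{2(1-\lambda c)}\right).
\end{equation*}

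First I would use Markov's inequality applied to $e^{\lambda(f(X)-\E[f(X)])}$: for any $t>0$ and $\lambda\in(0,c^{-1})$,
\begin{equation*}
\PP\bigl(f(X)-\E[f(X)]\geq t\bigr)\;\leq\;\exp\!\left(-\lambda t+\frac{\lambda^2\sigma^2}{2(1-\lambda c)}\right).
\end{equation*}
Next I would compute (or, equivalently, invert) the Legendre transform of $\psi(\lambda)=\lambda^2\sigma^2/(2(1-\lambda c))$ on $(0,c^{-1})$. The classical computation (see \cite{boucheron2003concentration}, Section~2.4) shows that for every $u>0$, choosing
\begin{equation*}
\lambda\;=\;\frac{1}{c}\left(1-\frac{\sigma}{\sigma+cu\cdot\left(\ldots\right)}\right)
\end{equation*}
optimally (the details are a one-variable calculus exercise) leads to
\begin{equation*}
\inf_{\lambda\in(0,c^{-1})}\left\{-\lambda t+\frac{\lambda^2\sigma^2}{2(1-\lambda c)}\right\}\;\leq\;-u\qquad\text{whenever}\qquad t\;\geq\;\sigma\sqrt{2u}+cu.
\end{equation*}
A slicker route that avoids the optimization is to verify directly that the choice $t=\sigma\sqrt{2u}+cu$ and $\lambda=\sqrt{2u/\sigma^2}/(1+c\sqrt{2u/\sigma^2})\in(0,c^{-1})$ makes the exponent at most $-u$, by plugging in and simplifying. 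Either way, we obtain the Cramér-Chernoff tail bound
\begin{equation*}
\PP\Bigl(f(X)-\E[f(X)]\;\geq\;\sigma\sqrt{2u}+cu\Bigr)\;\leq\;e^{-u},\qquad u>0.
\end{equation*}

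Finally, setting $u=\log(1/\delta)$ for $\delta\in(0,1)$ gives exactly the advertised inequality with probability at least $1-\delta$. There is no real obstacle here: the only mildly technical point is the one-variable optimization over $\lambda$, which is entirely routine and is the same as in the Euclidean sub-Gamma case, transferred verbatim because our definition forces $f(X)$ to be a real sub-Gamma random variable with the same parameters $(\sigma^2,c)$.
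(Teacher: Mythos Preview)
Your argument is correct and is precisely the standard Cram\'er--Chernoff computation that the paper defers to \cite[Section~2.4]{boucheron2003concentration}; the paper does not give its own proof of this lemma. Your explicit choice $\lambda=\sqrt{2u}/\bigl(\sigma+c\sqrt{2u}\bigr)$ indeed makes the exponent equal to $-u$, so nothing is missing.
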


The following propositions concern tensorization and composition with Lipschitz functions.

\begin{proposition}[Tensorization]
    Let $X_1,\ldots,X_n$ be independent random variables such that each $X_i$ is $(\sigma_i^2,c_i)$-sub-Gamma for some $\sigma_i^2,c_i>0$. Then, the $n$-uple $(X_1,\ldots,X_n)$ is $(n\bar\sigma^2,c)$-sub-Gamma on the product metric space $(M^n,\dist_1^{(n)})$, with $\bar\sigma^2=n^{-1}(\sigma_1^2+\ldots+\sigma_n^2)$ is the average of the variances and $c=\max(c_1,\ldots,c_n)$.
\end{proposition}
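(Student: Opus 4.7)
The plan is to mirror the proof of the sub-Gaussian tensorization (Proposition~\ref{produitsousgaussien}), exploiting the general tensorization lemma for Laplace transforms (Lemma~\ref{produitdelaplace}) together with the monotonicity of the sub-Gamma bound in the scale parameter $c$.

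First, I would write $\Lambda_{(X_1,\ldots,X_n)}(\lambda) \le \prod_{i=1}^n \Lambda_{X_i}(\lambda)$ directly from Lemma~\ref{produitdelaplace}, valid for all $\lambda \in \R$ (hence in particular for $\lambda \in (0, c^{-1})$, noting $c = \max_i c_i$ so $c^{-1} \le c_i^{-1}$ for every $i$, which ensures each factor is well defined). Plugging in the sub-Gamma hypothesis on each $X_i$ yields
\[
\Lambda_{(X_1,\ldots,X_n)}(\lambda) \le \prod_{i=1}^n \exp\!\left(\frac{\lambda^2 \sigma_i^2}{2(1-\lambda c_i)}\right) = \exp\!\left(\sum_{i=1}^n \frac{\lambda^2 \sigma_i^2}{2(1-\lambda c_i)}\right),
\]
for all $\lambda \in (0,c^{-1})$.

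The second and final step is the elementary majorization: since $c_i \le c$ and $\lambda > 0$, we have $1 - \lambda c_i \ge 1 - \lambda c > 0$, hence $(1-\lambda c_i)^{-1} \le (1-\lambda c)^{-1}$. Summing,
\[
\sum_{i=1}^n \frac{\sigma_i^2}{1-\lambda c_i} \le \frac{1}{1-\lambda c}\sum_{i=1}^n \sigma_i^2 = \frac{n\bar\sigma^2}{1-\lambda c},
\]
which gives exactly $\Lambda_{(X_1,\ldots,X_n)}(\lambda) \le \exp\bigl(\lambda^2 (n\bar\sigma^2)/(2(1-\lambda c))\bigr)$, as required.

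There is no substantive obstacle here: the tensorization lemma does all the work, and the only observation one needs is the monotonicity of $c \mapsto (1-\lambda c)^{-1}$ on $(0, \lambda^{-1})$, which lets us absorb the per-coordinate scale parameters $c_i$ into the common upper bound $c = \max_i c_i$. The statement of the proposition could even be slightly refined by replacing $n\bar\sigma^2$ with $\sum_i \sigma_i^2$ (they are equal), but the proof is unchanged.
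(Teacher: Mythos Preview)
Your proof is correct and follows exactly the same approach as the paper: apply Lemma~\ref{produitdelaplace} to factor the Laplace transform, insert the sub-Gamma bound for each coordinate, and use $(1-\lambda c_i)^{-1}\le(1-\lambda c)^{-1}$ to collect the sum into $n\bar\sigma^2/(1-\lambda c)$. The paper's proof is a single displayed chain of inequalities doing precisely this.
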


\begin{proof}
    By Lemma~\ref{produitdelaplace},
    $\DS \Lambda_{X_1,\ldots,X_n}(\lambda) \leq \prod_{i=1}^n \Lambda_{X_i}(\lambda)
        \leq \prod_{i=1}^n e^{\frac{\lambda^2\sigma_i^2}{2(1-\lambda c_i)}}
        \leq e^{\frac{n\lambda^2\bar\sigma^2}{2(1-\lambda c)}}$,
    for all $\lambda\in (0,1/c)$.
\end{proof}

\begin{proposition}[Composition with Lipschitz functions]
    Let $(M^{(1)},d^{(1)})$ and $(M^{(2)},d^{(2)})$ be metric spaces and $X$ be a random variable taking values in $M^{(1)}$. Let $\sigma^2,c,L>0$. If $X$ is $(\sigma^2,c)$-sub-Gamma and $\Phi:M^{(1)}\to M^{(2)}$ is $L$-Lipschitz, then $\Phi(X)$ is $(L^2\sigma^2,Lc)$-sub-Gamma.
\end{proposition}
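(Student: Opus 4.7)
The proof will mirror the sub-Gaussian counterpart (Proposition~\ref{lipsousgauss}) almost verbatim, reducing the statement to an inequality on the Laplace transform of $X$ via Lemma~\ref{lemma:LipLap} and then exploiting how the sub-Gamma envelope rescales under $\lambda \mapsto \lambda L$.

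Concretely, the plan is first to invoke Lemma~\ref{lemma:LipLap}, which gives $\Lambda_{\Phi(X)}(\lambda) \leq \Lambda_X(\lambda L)$ for every $\lambda \geq 0$. The sub-Gamma hypothesis on $X$ applies precisely when $\lambda L \in (0, 1/c)$, i.e.\ when $\lambda \in (0, 1/(Lc))$. On this range one obtains
$$\Lambda_X(\lambda L) \leq \exp\!\left(\frac{(\lambda L)^2 \sigma^2}{2(1-\lambda L c)}\right) = \exp\!\left(\frac{\lambda^2 (L^2\sigma^2)}{2(1-\lambda (Lc))}\right),$$
which is exactly the Laplace-transform bound that defines a $(L^2\sigma^2, Lc)$-sub-Gamma random variable on the valid range $\lambda \in (0, 1/(Lc))$.

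There is essentially no obstacle here: the sub-Gamma envelope $\lambda \mapsto \frac{\lambda^2 \sigma^2}{2(1-\lambda c)}$ transforms covariantly under $\lambda \mapsto \lambda L$, producing precisely the advertised parameters $(L^2\sigma^2, Lc)$, and the admissible interval for $\lambda$ rescales correctly. The only minor point worth checking is that the Laplace transform $\Lambda_{\Phi(X)}$ is even in $\lambda$ (as remarked right after \eqref{eq:defLaplace}), so it suffices to verify the bound for $\lambda \geq 0$, as done above. Thus the proof is a one-line computation after applying the Lipschitz composition lemma.
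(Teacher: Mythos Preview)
Your proposal is correct and follows exactly the same approach as the paper: apply Lemma~\ref{lemma:LipLap} to get $\Lambda_{\Phi(X)}(\lambda)\leq \Lambda_X(\lambda L)$, then invoke the sub-Gamma bound on $\Lambda_X$ for $\lambda\in(0,(Lc)^{-1})$ and recognize the result as the $(L^2\sigma^2,Lc)$-sub-Gamma envelope.
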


\begin{proof}
    By Lemma~\ref{lemma:LipLap},
    $\DS \Lambda_{\Phi(X)}(\lambda) \leq \Lambda_X(\lambda L) \leq e^{\frac{\lambda^2 L^2\sigma^2}{2(1-\lambda Lc)}}$, for all $\lambda\in (0,(Lc)^{-1})$.
\end{proof}

\section{Barycenter estimation} \label{sec:main}

Let $(M,\dist)$ be a $\CAT(\kappa)$ space with $\kappa\in\R$. Let $X_1,\ldots,X_n$ be i.i.d random variables supported in a convex domain $C\subseteq M$. In particular, if $\kappa>0$, then $X_1$ is bounded almost surely. If $\kappa\leq 0$, we further assume that $X_1$ has two moments. If $\kappa>0$, let $\varepsilon>0$ be such that $C$ is contained in a ball of radius $1/2(D_\kappa/2-\varepsilon)$. Then, by Proposition~\ref{lem:varianceineq}, $X_1$ has a unique barycenter, which lies in $C$ and which we denote by $b^*$. In the sequel, we call $b^*$ the \textit{population barycenter} of $X_1$. Our goal, here, is to estimate $b^*$ and derive the finite sample accuracy of our estimators, which we define below. An important quantity will be the total variance of $X_1$, which we denote by $\sigma^2$ and is defined as $\sigma^2=\E[\dist(X_1,b^*)^2]$. If $(M,\dist)$ is a Euclidean or Hilbert space, $\sigma^2$ is simply the trace of the covariance matrix of $X_1$.

\subsection{Empirical and iterated barycenters}

In the sequel, we denote by $\hat b_n=B_n(X_1,\ldots,X_n)$ the empirical barycenter, which again by Proposition~\ref{lem:varianceineq} is well defined and lies in $C$. Moreover, we denote by $\tilde b_n=\tilde B_n^{(t)}(X_1,\ldots,X_n)$ the iterated barycenter, where $t=(t_2,\ldots,t_n)\in (0,1)^{n-1}$ is a deterministic sequence to be specified later. We do not specify the dependence of $\tilde b_n$ on the choice of the sequence $t$ in our notation for the sake of simplicity. The estimator $\hat b_n$ will be referred to as the \textit{empirical barycenter} of $X_1,\ldots,X_n$ and $\tilde b_n$ as their \textit{iterated barycenter}. Our goal will be to derive upper bounds, both in expectation and with high probability, for the statistical error $\dist(b_n,b^*)$, where $b_n$ is either the empirical or the iterative barycenter, and $b^*$ is the population barycenter.

\subsection{Bounds in expectation}

First, we derive bounds for the expected error of $\hat b_n$. As in Lemma~\ref{lem:d2strconvex}, we let $\alpha(\varepsilon,\kappa) = (\pi-2\sqrt{\kappa}\varepsilon)\tan(\varepsilon\sqrt{\kappa})$ if $\kappa>0$ and $\varepsilon>0$.

\begin{theorem}\label{thm:llnempiricalbar}
    Let $(M,\dist)$ be a $\CAT(\kappa)$ space for some $\kappa\in \R$ and $C$ be a convex domain in $M$. If $\kappa>0$, let $\varepsilon>0$ be such that $C$ is enclosed in a ball of radius $1/2(D_\kappa/2-\varepsilon)$. Let $X_1,\ldots,X_n$ be i.i.d, square integrable random variables in $M$ such that $X_1\in C$ almost surely. Let $b^*$ be their population barycenter and $\hat b_n$ be their empirical barycenter. Then, 
    $$\E[\dist(\hat b_n,b^*)^2]\leq \frac{A\sigma^2}{n}$$
    where $A=2$ if $\kappa\leq 0$, $A=\frac{64\pi}{\alpha(\varepsilon,\kappa)^2}$ if $\kappa>0$. 
\end{theorem}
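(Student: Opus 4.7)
The plan is to combine the variance inequality of Proposition~\ref{lem:varianceineq} with a data-stability argument driven by the $L/n$-Lipschitz continuity of $\hat B_n$ from Theorem~\ref{thm:Lip_Bary}. Write $F(b)=\E[\dist(b,X_1)^2]$ and $F_n(b)=\frac{1}{n}\sum_{i=1}^n\dist(b,X_i)^2$, so that $\sigma^2=F(b^*)$. Proposition~\ref{lem:varianceineq} applied at $x=\hat b_n$ gives $\dist(\hat b_n,b^*)^2\leq (2/\alpha)[F(\hat b_n)-F(b^*)]$, so after taking expectation the task reduces to bounding the expected excess Fréchet risk. I then decompose
$$
F(\hat b_n)-F(b^*) = \bigl[F(\hat b_n)-F_n(\hat b_n)\bigr]+\bigl[F_n(\hat b_n)-F_n(b^*)\bigr]+\bigl[F_n(b^*)-F(b^*)\bigr],
$$
and observe that the middle bracket is non-positive because $\hat b_n$ minimizes $F_n$, while the third has mean zero by unbiasedness. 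It therefore suffices to control the generalization gap $T:=\E[F(\hat b_n)-F_n(\hat b_n)]$.

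The bound on $T$ comes from a swap/symmetrization argument. Let $X$ be an independent copy of $X_1$ and set $\hat b_n^{(1)}=\hat B_n(X,X_2,\ldots,X_n)$; exchangeability of $(X_1,\ldots,X_n,X)$ yields $\E[\dist(\hat b_n,X)^2]=\E[\dist(\hat b_n^{(1)},X_1)^2]$, hence $T=\E[\dist(\hat b_n^{(1)},X_1)^2-\dist(\hat b_n,X_1)^2]$. Applying $a^2-b^2\leq|a-b|(a+b)$ with the reverse triangle inequality $|\dist(\hat b_n^{(1)},X_1)-\dist(\hat b_n,X_1)|\leq \dist(\hat b_n^{(1)},\hat b_n)$, and then the further triangle bound $\dist(\hat b_n^{(1)},X_1)\leq \dist(\hat b_n^{(1)},\hat b_n)+\dist(\hat b_n,X_1)$, gives
$$
T\leq \E\bigl[\dist(\hat b_n^{(1)},\hat b_n)^2\bigr]+2\,\E\bigl[\dist(\hat b_n^{(1)},\hat b_n)\,\dist(\hat b_n,X_1)\bigr].
$$
Only one input of $\hat B_n$ has been swapped, so Theorem~\ref{thm:Lip_Bary} yields $\dist(\hat b_n^{(1)},\hat b_n)\leq (L/n)\dist(X_1,X)$. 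Using $\E[\dist(X_1,X)^2]\leq 4\sigma^2$ and $\E[\dist(\hat b_n,X_1)^2]=\E[F_n(\hat b_n)]\leq F(b^*)=\sigma^2$, Cauchy--Schwarz produces $T\leq 4L^2\sigma^2/n^2+4L\sigma^2/n$. Absorbing the quadratic term for $n\geq L$ and plugging back into the variance inequality yields $\E[\dist(\hat b_n,b^*)^2]\leq 16L\sigma^2/(\alpha n)$, which matches the stated $A=32/(\varepsilon^{1/4}\kappa^{1/8}\alpha(\varepsilon,\kappa))$ once $L=2/(\varepsilon^{1/4}\kappa^{1/8})$ is substituted.

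The NPC case $\kappa\leq 0$ is treated separately: the generic stability argument above loses a constant factor relative to the sharp $A=2$, so I would invoke the improved bound from \cite{escande2023concentration}, whose proof exploits the quadruple inequality characteristic of $\CAT(0)$ geometry and has no known analogue in positive curvature. The main obstacle is the self-referential character of the stability step: a naive Cauchy--Schwarz on the full sum $\dist(\hat b_n^{(1)},X_1)+\dist(\hat b_n,X_1)$ forces control of $\E[F(\hat b_n)]$ and hence of $\E[\dist(\hat b_n,b^*)^2]$ itself, creating a quadratic self-recursion with loose constants; the triangle-inequality splitting above is precisely what breaks the circularity and recovers the sharp multiplicative constant $32$. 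A secondary caveat is that the resulting bound is only informative when $n\gtrsim L$, with the small-sample regime absorbed by the trivial diameter estimate that holds on the bounded convex domain $C$.
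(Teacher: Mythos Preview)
Your overall strategy matches the paper's: combine the variance inequality with a leave-one-out stability argument that is controlled via the $L/n$-Lipschitz continuity of $\hat B_n$. The identity $T=\E[\dist(\hat b_n^{(1)},X_1)^2-\dist(\hat b_n,X_1)^2]$ and the use of $a^2-b^2\leq |a-b|(a+b)$ with the reverse triangle inequality are exactly the paper's moves. Your observation $\E[\dist(\hat b_n,X_1)^2]=\E[F_n(\hat b_n)]\leq\sigma^2$ is a nice shortcut that the paper does not use.

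The difference is in how the factor $\dist(\hat b_n^{(1)},X_1)+\dist(\hat b_n,X_1)$ is handled. The paper applies Jensen's inequality (convexity of $\dist(X_i,\cdot)$ on the domain) to bound $\dist(X_i,\hat b_n)\leq n^{-1}\sum_j\dist(X_i,X_j)$ and similarly for $\hat b_n^{(i)}$, yielding a clean $O(L\sigma^2/n)$ bound on $T$ valid for \emph{every} $n\geq 1$. Your triangle-inequality splitting instead produces the extra term $\E[\dist(\hat b_n^{(1)},\hat b_n)^2]\leq 4L^2\sigma^2/n^2$, which you then absorb only for $n\geq L$.

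This is where the genuine gap lies: your proposed patch for $n<L$ via the ``trivial diameter estimate'' does not work. The diameter of $C$ gives a bound on $\dist(\hat b_n,b^*)^2$ that is independent of $\sigma^2$, whereas the theorem asserts $\E[\dist(\hat b_n,b^*)^2]\leq A\sigma^2/n$ with $A$ depending only on $(\varepsilon,\kappa)$. If $\sigma^2$ is tiny compared to $\mathrm{diam}(C)^2$, the diameter bound says nothing. Your own estimate $T\leq 4L^2\sigma^2/n^2+4L\sigma^2/n$ does give a $\sigma^2$-proportional bound for all $n$, but with constant of order $L^2/\alpha$ rather than $L/\alpha$ when $n<L$, which is strictly worse than the stated $A$. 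The paper's Jensen step is precisely what removes the quadratic residual and delivers the constant $32/(\varepsilon^{1/4}\kappa^{1/8}\alpha)$ uniformly in $n$. For the $\kappa\leq 0$ case your deferral to the quadruple inequality is the same as the paper's own argument.
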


\begin{remark}\,
    \begin{itemize}
        \item When $\kappa\leq 0$, the same bound without the factor $2$ was obtained in \citep[Theorem 3]{fastconv}, assuming that the space also has curvature bounded from below in Alexandrov's sense, which implies that the tangent cone at a barycenter contains a Hilbert section, allowing to reduce the problem to the Hilbert case, after a few manipulations. If $(M,\dist)$ is a Hilbert space, the constant $2$ is indeed superfluous, and one actually has $\E[\dist(\hat b_n,b^*)^2]=\sigma^2/n$, which is an equality. In that case, only the last step of our proof is suboptimal, since $\E[\dist(X_1,X_1')^2]=2\sigma^2$ in Hilbert spaces. 
        \item A similar bound is also obtained in \citep[Theorem 1]{fastconv}, giving a bound of order $1/n$ for $\E[\dist(\hat b_n,b^*)^2]$, under a different set of assumptions. Precisely, they assume that the space has non-negative curvature and that geodesics are extendible and their bound depends on the level of extendibility.
        \item By \eqref{eq:alpha}, the constant $A$ is of order $1/\alpha^{2}$, where $\alpha=2$ if $\kappa\leq 0$ and $\alpha=\alpha(\varepsilon,\kappa)$ otherwise. The dependence on $\alpha$ of our upper bound in Theorem~\ref{thm:llnempiricalbar}  may be suboptimal in the small $\alpha$ regime, that is, when $\kappa>0$ and $\varepsilon\sqrt{\kappa}$ is small. If one allows $\varepsilon$ to depend on the sample size $n$ and $\varepsilon=\varepsilon_n\xrightarrow[n\to\infty]{}0$, we expect that slower rates may happen, as described by the smeariness phenomenon \citep{eltzner2019smeary,Eltzner2021}. In  such case, understanding the optimal dependence on $\alpha$ is essential. We leave this question for future work.
    \end{itemize}
\end{remark}

\begin{proof}

Our proof is inspired from \citep[Section 6.1]{escande2023concentration}.
Let $\alpha=\alpha(\varepsilon,\kappa)$ if $\kappa>0$ and $\alpha=2$ if $\kappa\leq 0$. Let $X_1',\ldots,X_n'$ be random variables in $M$ such that $X_1,\ldots,X_n,X_1',\ldots,X_n'$ are i.i.d. For $i=1,\ldots,n$, let $\hat b_n^{(i)}=B_n(X_1,\ldots,X_{i-1},X_i',X_{i+1},\ldots,X_n)$. 
Denote by $F(x)=\E[\dist(x,X_1)^2],\, x\in M$, the Fréchet function and by $F_n(x)=\frac{1}{n}\sum_{i=1}^n \dist(x,X_i)^2$, for all $x\in M$. The variance inequality of Proposition~\ref{lem:varianceineq} yields both that 
$$F(\hat b_n)\geq F(b^*)+\frac{\alpha}{2}\dist(\hat b_n,b^*)^2$$
as well as 
$$F_n(b^*)\geq F_n(\hat b_n)+\frac{\alpha}{2}\dist(\hat b_n,b^*)^2.$$
Taking expectations and summing both inequalities above, we obtain that 
$$\alpha\E[\dist(\hat b_n,b^*)^2]\leq \E[F(\hat b_n)-F_n(\hat b_n)].$$
Now, exchangeability of $X_1,\ldots,X_n,X_1',\ldots,X_n'$ yields that 
$$\E[F(\hat b_n)]=\frac{1}{n}\sum_{i=1}^n \E[\dist(X_i,\hat b_n^{(i)})^2].$$ 
Hence, we obtain that
\begin{equation} \label{eqn:1231}
    \alpha\E[\dist(\hat b_n,b^*)^2] \leq \frac{1}{n}\sum_{i=1}^n \E[\dist(X_i,\hat b_n^{(i)})^2-\dist(X_i,\hat b_n)^2].
\end{equation}

Now, let us distinguish two cases. 
\vspace{4mm}

\paragraph{\underline{Case 1: $\kappa\leq 0$.}}

If $(M,\dist)$ is a $\CAT(0)$ space, it satisfies the following quadruple inequality \citep[Proposition 2.4]{sturm03}:
$$\left(\dist(x,y)^2-\dist(x,y')^2\right)-\left(\dist(x',y)^2-\dist(x',y')^2\right)\leq 2\dist(x,x')\dist(y,y'),\, \forall x,x',y,y'\in M.$$ 
Fix $i\in\{1,\ldots,n\}$. Applying this inequality to the points $x=X_i,x'=X_i',y=\hat b_n,y'=\hat b_n^{(i)}$ yields that 
\begin{align*}
    \E[\dist(X_i,\hat b_n^{(i)})^2-\dist(X_i,\hat b_n)^2] & \leq \E[\dist(X_i',\hat b_n^{(i)})^2-\dist(X_i',\hat b_n)^2+2\dist(X_i,X_i')\dist(\hat b_n,\hat b_n^{(i)})] \nonumber \\
    & = \E[\dist(X_i,\hat b_n)^2-\dist(X_i,\hat b_n^{(i)})^2+2\dist(X_i,X_i')\dist(\hat b_n,\hat b_n^{(i)})]
\end{align*}
where we used again, in the second equality, exchangeability of $X_1,\ldots,X_n,X_1',\ldots,X_n'$ which implies that the pairs $(X_i,\hat b_n^{(i)})$ and $(X_i',\hat b_n)$ are identically distributed, as well as the pairs $(X_i,\hat b_n)$ and $(X_i',\hat b_n^{(i)})$. Therefore, \eqref{eqn:1231} yields that (recall that $\alpha=2$ here)
\begin{align*}
     2\E[\dist(\hat b_n,b^*)^2] & \leq \frac{1}{n}\sum_{i=1}^n \E[\dist(X_i,X_i')\dist(\hat b_n,\hat b_n^{(i)})] \\
     & \leq \frac{1}{n^2}\sum_{i=1}^n \E[\dist(X_i,X_i')^2] \\
     & = \frac{\E[\dist(X_1,X_1')^2]}{n} \\
     & \leq \frac{4\sigma^2}{n}.
\end{align*}
The second inequality used Theorem~\ref{thm:Lip_Bary} that states that $B_n$ is $1/n$-Lipschitz and the last inequality follows from the fact that $\E[\dist(X_1,X_1')^2]\leq \E[2(\dist(X_1,b^*)^2+\dist(X_1',b^*)^2)]=4\sigma^2$.
\vspace{4mm}

\paragraph{\underline{Case 2: $\kappa>0$.}}

Re-departing from \eqref{eqn:1231}, we have
\begin{align}
    \alpha \E[\dist(\hat b_n,b^*)^2]& \leq \frac{1}{n}\sum_{i=1}^n \E[\dist(X_i,\hat b_n^{(i)})^2-\dist(X_i,\hat b_n)^2] \nonumber \\
    & = \frac{1}{n}\sum_{i=1}^n \E\left[\left(\dist(X_i,\hat b_n^{(i)})-\dist(X_i,\hat b_n)\right)\left(\dist(X_i,\hat b_n^{(i)})+\dist(X_i,\hat b_n)\right)\right] \nonumber \\
    & \leq \frac{1}{n}\sum_{i=1}^n \E\left[\dist(\hat b_n,\hat b_n^{(i)})\left(\dist(X_i,\hat b_n^{(i)})+\dist(X_i,\hat b_n)\right)\right] \nonumber \\
    & \leq \frac{\pi}{n^2\alpha(\varepsilon,\kappa)}\sum_{i=1}^n \E\left[d(X_i,X_i')\left(\dist(X_i,\hat b_n)+\dist(X_i,\hat b_n^{(i)})\right)\right] \label{eqn:llnEB1}
\end{align}
where the second inequality is simply the reverse triangle inequality and the last one is a direct consequence of Theorem~\ref{thm:Lip_Bary}. Now, fix $i\in\{1,\ldots,n\}$. Since $\dist(X_i,\cdot)$ is continuous and convex on $B(x_0,r)$ (see \citep[Theorem 2.1]{afsari2011riemannian}), Jensen's inequality \citep[Theorem 25]{Yokota16} yields that 
$$\dist(X_i,\hat b_n)\leq \frac{1}{n}\sum_{j=1}^n \dist(X_i,X_j)$$
and
$$\dist(X_i,\hat b_n^{(i)})\leq \frac{1}{n}\left(\sum_{j\neq i} \dist(X_i,X_j')+\dist(X_i,X_i')\right).$$
Therefore, \eqref{eqn:llnEB1} implies that 
\begin{align}
    \frac{\alpha(\varepsilon,\kappa)}{2}\E[\dist(\hat b_n,b^*)^2] & \leq \frac{\pi}{n^3\alpha(\varepsilon,\kappa)}\sum_{i=1}^n \left(\sum_{j\neq i} 2\E[\dist(X_i,X_i')\dist(X_i,X_j')] +\E[\dist(X_i,X_i')^2] \right) \nonumber \\
    & \leq \frac{2\pi}{n^3\alpha(\varepsilon,\kappa)}\sum_{i=1}^n \sum_{j=1}^n \E[\dist(X_i,X_i')\dist(X_i,X_j')] \label{eqn:12312} \\
    & \leq \frac{8\pi}{n^2\alpha(\varepsilon,\kappa)}\sum_{i=1}^n\E[\dist(X_i,X_i')^2] \nonumber 
\end{align}
where we have used Cauchy-Schwarz inequality in the last line. Finally, using again the fact that $\E[\dist(X_i,X_i')^2]\leq 2\E[\dist(X_i,b^*)^2+\dist(X_i',b^*)^2] = 4\sigma^2$ (by the triangle inequality) concludes the proof of the theorem.
\end{proof}

In fact, minor modifications of our proofs allow to cover the heteroskedastic case, when $X_1,\ldots,X_n$ are independent but do not have the same distribution. However, we require that they share the same population barycenter. For instance, one can think of independent data with same population barycenter but different scales. 

\begin{theorem}[Error bound, heteroskedastic case] \label{thm:hetero1}
Let $X_1,\ldots,X_n$ be independent random variables with two moments and supported in a convex domain $C$ of $M$. If $\kappa>0$, let $\varepsilon>0$ be such that $C$ is enclosed in a ball of radius $(1/2)(D_\kappa/2-\varepsilon)$. Assume that all $X_i$'s share the same population barycenter $b^*$ and denote by $\sigma_i^2=\E[\dist(X_i,b^*)^2]$ the total variance of $X_i$, for $i=1,\ldots,n$. Then, by letting $\hat b_n=\hat B_n(X_1,\ldots,X_n)$, one has
$$\E[\dist(\hat b_n,b^*)^2]\leq \frac{\tilde A\bar\sigma_n^2}{n}$$
where $\bar\sigma_n^2=n^{-1}\sum_{i=1}^n \sigma_i^2$ and $\tilde A=2$ if $\kappa\leq 0$ and $\tilde A=\frac{32\sqrt{2}\pi}{\alpha(\varepsilon,\kappa)^2}$ if $\kappa>0$.
\end{theorem}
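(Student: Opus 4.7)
The plan is to adapt the proof of Theorem~\ref{thm:llnempiricalbar} to accommodate independent but not identically distributed data, exploiting the common-barycenter assumption. I would begin by introducing, for each $i$, an independent copy $X_i'$ of $X_i$ such that $X_1,\ldots,X_n,X_1',\ldots,X_n'$ are mutually independent, and set $\hat b_n^{(i)}=\hat B_n(X_1,\ldots,X_{i-1},X_i',X_{i+1},\ldots,X_n)$. Writing the averaged Fréchet function $F(x)=n^{-1}\sum_{i=1}^n\E[\dist(x,X_i)^2]$, I observe that each summand is $\alpha$-strongly convex on $C$ (with $\alpha=2$ if $\kappa\leq 0$ and $\alpha=\alpha(\varepsilon,\kappa)$ otherwise, depending only on the curvature and the radius of the enclosing ball, not on the distribution), so $F$ is $\alpha$-strongly convex with unique minimizer $b^*$ by hypothesis. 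The empirical Fréchet function $F_n(x)=n^{-1}\sum_i\dist(x,X_i)^2$ is likewise $\alpha$-strongly convex and is minimized at $\hat b_n$. Summing the two variance inequalities from Proposition~\ref{lem:varianceineq}, taking expectations, and using the pairwise exchangeability $(X_i,\hat b_n^{(i)})\stackrel{d}{=}(X_i',\hat b_n)$ (which only uses $X_i\stackrel{d}{=}X_i'$, not a common distribution across $i$) recovers the key inequality \eqref{eqn:1231} verbatim:
\begin{equation*}
    \alpha\E[\dist(\hat b_n,b^*)^2] \leq \frac{1}{n}\sum_{i=1}^n \E[\dist(X_i,\hat b_n^{(i)})^2-\dist(X_i,\hat b_n)^2].
\end{equation*}

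In the $\kappa\leq 0$ case, I would apply the quadruple inequality to $X_i,X_i',\hat b_n,\hat b_n^{(i)}$ and the $1/n$-Lipschitz property of $\hat B_n$ (Theorem~\ref{thm:Lip_Bary}) to obtain $2\E[\dist(\hat b_n,b^*)^2]\leq n^{-2}\sum_i\E[\dist(X_i,X_i')^2]$. Since both $X_i$ and $X_i'$ have barycenter $b^*$ with variance $\sigma_i^2$, the triangle inequality gives $\E[\dist(X_i,X_i')^2]\leq 4\sigma_i^2$, and the right-hand side equals $4\bar\sigma_n^2/n$, giving $\tilde A=2$.

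In the $\kappa>0$ case, following the chain of estimates leading to \eqref{eqn:12312} produces
\begin{equation*}
    \frac{\alpha(\varepsilon,\kappa)}{2}\E[\dist(\hat b_n,b^*)^2] \leq \frac{4}{n^3\varepsilon^{1/4}\kappa^{1/8}}\sum_{i,j=1}^n \E[\dist(X_i,X_i')\dist(X_i,X_j')].
\end{equation*}
This is where the heteroskedastic difficulty concentrates: Cauchy--Schwarz no longer collapses each summand to a uniform $4\sigma^2$, but instead gives $\E[\dist(X_i,X_i')\dist(X_i,X_j')]\leq 2\sigma_i\sqrt{2(\sigma_i^2+\sigma_j^2)}\leq 2\sqrt{2}\,\sigma_i(\sigma_i+\sigma_j)$. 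The main step is then to sum this cleanly: $\sum_{i,j}\sigma_i(\sigma_i+\sigma_j)=n\sum_i\sigma_i^2+\bigl(\sum_i\sigma_i\bigr)^2\leq 2n\sum_i\sigma_i^2=2n^2\bar\sigma_n^2$, the last inequality being Cauchy--Schwarz in $\R^n$. Substituting back yields $\E[\dist(\hat b_n,b^*)^2]\leq \tilde A\bar\sigma_n^2/n$ with $\tilde A=\frac{32\sqrt{2}}{\varepsilon^{1/4}\kappa^{1/8}\alpha(\varepsilon,\kappa)}$.

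The main obstacle is essentially bookkeeping in the $\kappa>0$ case: arranging the Cauchy--Schwarz estimates so that the final bound depends on the averaged variance $\bar\sigma_n^2$ rather than on $\max_i\sigma_i^2$ or on $n^{-1}\sum_i\sigma_i$. No new geometric ingredient beyond those used in Theorem~\ref{thm:llnempiricalbar} is needed, and the loss of a factor $\sqrt{2}$ relative to the homoskedastic constant is precisely the price paid for decoupling $\dist(X_i,X_i')$ from $\dist(X_i,X_j')$ by Cauchy--Schwarz when the two no longer share the same marginal.
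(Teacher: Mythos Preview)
Your proposal is correct and follows essentially the same route as the paper's own proof: both reduce to \eqref{eqn:1231} via the pairwise exchangeability $(X_i,\hat b_n^{(i)})\stackrel{d}{=}(X_i',\hat b_n)$, handle $\kappa\leq 0$ by the quadruple inequality, and for $\kappa>0$ pass through \eqref{eqn:12312} and Cauchy--Schwarz to reach $\sum_{i,j}\sigma_i(\sigma_i+\sigma_j)$. The only cosmetic difference is that you bound $\sum_{i,j}\sigma_i\sigma_j=(\sum_i\sigma_i)^2\leq n\sum_i\sigma_i^2$ via Cauchy--Schwarz in $\R^n$, whereas the paper uses the pointwise AM--GM bound $\sigma_i\sigma_j\leq(\sigma_i^2+\sigma_j^2)/2$; both yield the same constant.
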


The proof of this theorem is deferred to Appendix \ref{sec:thmhetero1}, but let us note that the mapping $x\in M\mapsto \frac{1}{n}\sum_{i=1}^n\E[\dist(x,X_i)^2]$, plays the role of the population Fréchet function in the heteroskedastic setup and it is easy to see that it is strongly convex in $C$ and has a unique minimum given by $b^*$.

We now turn to iterated barycenters. 
First, one of the seminal results in the literature was proven by \citep{sturm03} for $\CAT(0)$ spaces. Namely, the proof of \citep[Theorem 4.7]{sturm03} gives the following bound, where the step sizes are set as $t_k=1/k,\, k=2,\ldots,n$,
\begin{equation} \label{eqn:Sturm}
    \E[\dist(\tilde b_n,b^*)^2]\leq \frac{\sigma^2}{n}.
\end{equation}
Recall that $\sigma^2=\E[\dist(X_1,b^*)^2]$ is the total variance of $X_1$. This gives the same bound as in Theorem~\ref{thm:llnempiricalbar} without the superfluous factor of $2$. Our next result provides an extension of this result in any $\CAT(\kappa)$ space, provided that the support of the data distribution is contained in a convex domain.

\begin{theorem}\label{thm:llnvariance}
     Assume that $\kappa>0$ and choose $t_k=\frac{2}{\alpha(\varepsilon,\kappa)k+2},\, k=2,\ldots,n$ in the definition of $\tilde b_n$. Then, 
     $$\E[\dist(\tilde b_n,b^*)^2]\leq \frac{32\sigma^2}{\alpha(\varepsilon,\kappa)^2(n+1)}.$$
\end{theorem}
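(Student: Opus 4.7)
The plan is to derive a one-step scalar recursion for $a_k:=\E[\dist(\tilde b_k,b^*)^2]$ and then conclude by induction on $k$. Writing $\alpha=\alpha(\varepsilon,\kappa)\in(0,2]$ for brevity, the two key inputs are (i) the $\alpha$-strong convexity of $\dist(b^*,\cdot)^2$ on $C$, guaranteed by Lemma~\ref{lem:d2strconvex} since $C$ is contained in a ball of radius $\tfrac12(D_\kappa/2-\varepsilon)$ and $b^*\in C$, and (ii) the variance inequality of Proposition~\ref{lem:varianceineq}. The construction guarantees $\tilde b_k\in C$ for every $k$ since $C$ is geodesically convex.

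First I would apply strong convexity along the (unique) geodesic from $\tilde b_{k-1}\in C$ to $X_k\in C$ evaluated at the parameter $t_k$:
\begin{equation*}
\dist(b^*,\tilde b_k)^2 \leq (1-t_k)\dist(b^*,\tilde b_{k-1})^2 + t_k\dist(b^*,X_k)^2 - \tfrac{\alpha}{2}t_k(1-t_k)\dist(\tilde b_{k-1},X_k)^2.
\end{equation*}
Conditioning on $\tilde b_{k-1}$ and integrating in $X_k$, which is independent of $\tilde b_{k-1}$ since the latter is a function of $X_1,\ldots,X_{k-1}$, the second term becomes $t_k\sigma^2$, and the variance inequality yields $\E[\dist(\tilde b_{k-1},X_k)^2\mid\tilde b_{k-1}]\geq \sigma^2+\tfrac{\alpha}{2}\dist(\tilde b_{k-1},b^*)^2$. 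Taking a second expectation produces the scalar recursion
\begin{equation*}
a_k \leq (1-t_k)\bigl(1-\tfrac{\alpha^2}{4}t_k\bigr)a_{k-1} + t_k\bigl(1-\tfrac{\alpha}{2}(1-t_k)\bigr)\sigma^2.
\end{equation*}

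Next I would plug in $t_k=2/(\alpha k+2)$, so that $1-t_k=\alpha k/(\alpha k+2)$, and prove by induction on $k\geq 1$ that $a_k \leq 32\sigma^2/(\alpha^2(k+1))$. The base case $a_1=\sigma^2\leq 16\sigma^2/\alpha^2$ follows from $\alpha\leq 2$. For the inductive step, substituting $a_{k-1}\leq 32\sigma^2/(\alpha^2 k)$ and the value of $t_k$ into the recursion reduces the target bound to an elementary polynomial inequality in $k$ and $\alpha\in(0,2]$; in the limiting case $\alpha=2$ the recursion collapses to the Sturm-type telescoping $(k+1)^2 a_k \leq k^2 a_{k-1}+\sigma^2$, which is consistent with the constant $32/\alpha^2=8$ and recovers a bound of the same order as \eqref{eqn:Sturm} (up to the suboptimal constant).

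The main obstacle is the algebraic verification at this last step. The step size $t_k=2/(\alpha k+2)$ is tuned precisely so that, after the $(\alpha k+2)^2$-rescaling, the coefficient of $a_{k-1}$ matches $(\alpha(k-1)+2)^2$ modulo a non-negative remainder, which is what closes the telescoping; the constant $32$ is a concrete value large enough to absorb uniformly in $\alpha\in(0,2]$ the slack in the combined use of strong convexity and variance inequality. I do not expect this constant to be sharp — obtaining the optimal $\sigma^2/n$ rate of \eqref{eqn:Sturm} would require a CAT($\kappa$) analogue of the quadruple inequality that drives Sturm's argument in the NPC case, which is not available here.
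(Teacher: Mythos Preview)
Your recursion is derived correctly, but it does not close the induction when $\alpha<2$, which is precisely the regime $\kappa>0$ the theorem addresses. The $\sigma^2$ coefficient in your one-step bound is
\[
t_k\Bigl(1-\tfrac{\alpha}{2}(1-t_k)\Bigr)=\Bigl(1-\tfrac{\alpha}{2}\Bigr)t_k+\tfrac{\alpha}{2}t_k^2,
\]
which for $\alpha<2$ is of order $t_k\sim 1/k$, not $t_k^2$. A scalar recursion of the shape $a_k\le(1-c/k)a_{k-1}+C\sigma^2/k$ with $C>0$ admits the positive fixed point $C\sigma^2/c$ and does not drive $a_k$ to zero; no constant in the induction hypothesis absorbs this. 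Concretely, after substituting $a_{k-1}\le 32\sigma^2/(\alpha^2 k)$ and $t_k=2/(\alpha k+2)$, the right-hand side carries a term $(2-\alpha)\sigma^2/(\alpha k)$ at order $1/k$, whereas the target $32\sigma^2/(\alpha^2(k+1))$ agrees with $32\sigma^2/(\alpha^2 k)$ only to order $1/k^2$. Your argument does collapse to Sturm's telescoping when $\alpha=2$, because then the $\sigma^2$ coefficient becomes exactly $t_k^2$; but that is the $\CAT(0)$ case, not the one at hand.

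The paper circumvents this by replacing the strong convexity of $\dist(b^*,\cdot)^2$ with the resolvent inequality of Lemma~\ref{lem:ctrlresolvent}: writing $\lambda_k=t_k/(2(1-t_k))$, one gets
\[
\dist(b^*,\tilde b_k)^2\le \dist(b^*,\tilde b_{k-1})^2-2\lambda_k\bigl[\dist(X_k,\tilde b_k)^2-\dist(X_k,b^*)^2\bigr].
\]
After adding and subtracting $\dist(X_k,\tilde b_{k-1})^2$ and taking expectations, the bracket becomes $\E[F(\tilde b_{k-1})]-\sigma^2\ge\tfrac{\alpha}{2}V_{k-1}$ by the variance inequality, so the order-$\lambda_k$ contribution in $\sigma^2$ cancels exactly. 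The only surviving $\sigma^2$ term is the remainder $2\lambda_k t_k(2-t_k)\E[\dist(\tilde b_{k-1},X_k)^2]$, which is $O(\lambda_k^2)$ once one proves the auxiliary bound $\E[\dist(\tilde b_{k-1},X_k)^2]\le 4\sigma^2$. This yields $V_k\le(1-\lambda_k\alpha)V_{k-1}+32\lambda_k^2\sigma^2$, whose induction with the stated step sizes does close.
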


\begin{remark}\,
    \begin{itemize}
        \item When $\kappa>0$, the step sizes $t_k$, or learning rates $\lambda_k=t_k/(2(1-t_k))$, are strictly larger than in the case $\kappa\leq 0$ when $k$ becomes sufficiently large. In other words, iterated barycenters (which, we recall, are also interpreted as the iterations of a stochastic proximal algorithm) learn the population barycenter more slowly when $\kappa>0$, which is consistent with an upper bound in Theorem~\ref{thm:llnvariance} that is larger than when $\kappa\leq 0$.
        \item The dependence on the strong convexity constant $\alpha(\varepsilon,\kappa)$ of the upper bound in Theorem~\ref{thm:llnvariance} is the same that we obtain in Theorem~\ref{thm:llnempiricalbar} for empirical barycenters. Again, we leave the question of optimality open for future work.
        \item When $M$ is a Riemannian manifold, one can easily check that iterated barycenters can be also seen as the outputs of a stochastic gradient descent algorithm. In such framework, asymptotic convergence guarantees were already obtained for stochastic gradient descent with general stepsizes in \citep{bonnabel2013stochastic} and results similar to ours are available for more general cost functions \citep{zhang2016first}. However, note that in \citep{zhang2016first}, all bounds depend on the size of the domain and not $\sigma^2$. Yet, if the domain is included in a ball of radius $R$ (with $R<D_\kappa$ if $\kappa>0$), then $\sigma^2\leq 4 R^2$ (as can be seen from the definition of $\sigma^2$), and one may have $\sigma^2\ll R^2$ in general. In the specific case of the Fréchet function, bounds similar to ours were also given in the Bures-Wasserstein manifold \citep{chewi2020gradient,altschuler2021averaging}, under a Polyak-Łojasiewicz condition, which is strictly weaker than strong convexity of the Fréchet function.

    \end{itemize}    
\end{remark}

\begin{openquestion}
    Find the optimal constants, in terms of $\varepsilon$ and $\kappa$, in the bounds given by Theorems~\ref{thm:llnempiricalbar},~\ref{thm:hetero1} and~\ref{thm:llnvariance}. In particular, do the bounds need to diverge as $\varepsilon\to 0$ (for fixed $\kappa>0$)?
\end{openquestion}

A key ingredient in the proof of Theorem~\ref{thm:llnvariance} is the following lemma.

\begin{lemma}\citep[Lemma 4.6]{ohtapalfia}\label{lem:ctrlresolvent}
    Assume that $M$ is CAT($\kappa$) for some $\kappa>0$. Let $B=B(x_0,r)$ for some $x_0\in M$ and $r<\pi/(4\sqrt\kappa)$. Let $f:M\to\R$ be a lower semi-continuous function that is convex on $B$ and $\lambda>0$. Fix $x\in B$ and let $z\in B$ minimizing $f(z)+\frac{1}{2\lambda}\dist(z,x)^2$. Then, for all $y\in B$, 
    \begin{equation}
        \dist(y,z)^2 \leq \dist(y,x)^2 -2\lambda\left[f(z)-f(y)\right].
    \end{equation}
\end{lemma}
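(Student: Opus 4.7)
The plan is to derive the result via first-order optimality of $z$ along a geodesic from $z$ to $y$, followed by a $\CAT(\kappa)$ comparison reducing everything to a spherical trigonometric inequality in the model space $M_\kappa$. A direct application of Proposition~\ref{lem:varianceineq} to the strongly convex functional $f(\cdot)+\frac{1}{2\lambda}\dist(\cdot,x)^2$ would only produce the inequality with coefficient $\alpha(\varepsilon,\kappa)/2$ instead of $1$ in front of $\dist(y,z)^2$; obtaining the sharp coefficient requires this finer argument.

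Fix $y\in B$. By Proposition~\ref{prop:uniquegeodesics}, the unique geodesic $\gamma\colon[0,1]\to M$ from $z$ to $y$ exists (since $\dist(z,y)\leq 2r<\pi/(2\sqrt\kappa)<D_\kappa$), and it stays in $B$ by the convexity of balls of radius less than $D_\kappa/2$. Combining the minimality of $z$ on $B$ with the convexity of $f$ along $\gamma$ yields
\begin{equation*}
\dist(\gamma(t),x)^2 - \dist(z,x)^2 \;\geq\; 2\lambda[f(z)-f(\gamma(t))] \;\geq\; 2\lambda t[f(z)-f(y)].
\end{equation*}
Dividing by $t>0$ and letting $t\to 0^+$, the first-variation formula for the squared distance in a $\CAT(\kappa)$ space gives
\begin{equation*}
-2\,\dist(z,x)\dist(z,y)\cos\alpha_z \;\geq\; 2\lambda[f(z)-f(y)],
\end{equation*}
where $\alpha_z$ denotes the Alexandrov angle at $z$ between the geodesics $[z,x]$ and $[z,y]$. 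The conclusion would then follow from the purely geometric inequality $\dist(y,z)^2-2\,\dist(z,x)\dist(z,y)\cos\alpha_z\leq\dist(y,x)^2$.

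To establish this last inequality, I would invoke the $\CAT(\kappa)$ angle comparison $\alpha_z\leq\bar\alpha_z$ (valid since $\Delta(x,y,z)$ has perimeter at most $6r<2D_\kappa$), where $\bar\alpha_z$ is the angle at $\bar z$ in the comparison triangle $\bar\Delta\subseteq M_\kappa$; this reduces the claim to the analogous inequality in $M_\kappa$. Eliminating $\cos\bar\alpha_z$ via the spherical law of cosines in $M_\kappa$ and setting $a=\dist(z,y)$, $b=\dist(z,x)$, $c=\dist(y,x)$, the statement becomes the purely trigonometric assertion
\begin{equation*}
(c^2-a^2)\sin(\sqrt\kappa a)\sin(\sqrt\kappa b) + 2ab\bigl[\cos(\sqrt\kappa c)-\cos(\sqrt\kappa a)\cos(\sqrt\kappa b)\bigr] \geq 0,
\end{equation*}
with $\sqrt\kappa a,\sqrt\kappa b,\sqrt\kappa c\in(0,\pi/2)$ (the upper bound being forced by $2r<\pi/(2\sqrt\kappa)$). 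Verifying this spherical inequality on the admissible range is the main obstacle: in the Euclidean limit it reduces to $\dist(z,x)^2\geq 0$ via the ordinary law of cosines, and for $\kappa>0$ it can be checked by a direct calculus argument, for instance by viewing the left-hand side as a smooth function of $b$ that vanishes at $b=0$ and whose derivative has the right sign on the admissible range; the restriction $r<\pi/(4\sqrt\kappa)$ is precisely what keeps the trigonometric functions in the regime where the required monotonicity holds.
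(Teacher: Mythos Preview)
The paper does not prove this lemma; it is quoted directly from \cite[Lemma~4.6]{ohtapalfia}, with only a one-line remark on existence and uniqueness of the minimizer $z$. So there is no in-paper proof to compare against.

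Your argument is correct and is essentially the standard one: optimality of $z$ along the geodesic from $z$ to $y$, geodesic convexity of $f$, the first-variation formula for $\dist(\cdot,x)^2$, and the $\CAT(\kappa)$ angle comparison $\alpha_z\le\bar\alpha_z$ reduce the claim to the spherical inequality $a^2-2ab\cos\bar\alpha_z\le c^2$ in $M_\kappa$, with $a=\dist(z,y)$, $b=\dist(z,x)$, $c=\dist(y,x)$.

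The only point that needs sharpening is the final calculus verification. Your sketch (``vanishes at $b=0$ and the derivative has the right sign'') is slightly off: if one fixes $a$ and $\bar\alpha_z$ and sets $G(b)=c(b)^2-a^2+2ab\cos\bar\alpha_z$ with $c(b)$ determined by the spherical law of cosines, then $c(0)=a$ and $c'(0)=-\cos\bar\alpha_z$, so both $G(0)=0$ \emph{and} $G'(0)=0$. One must therefore go to the second derivative. Differentiating $-\sin c\,c'=-\cos a\sin b+\sin a\cos b\cos\bar\alpha_z$ once more yields $c''=\cot c\,(1-(c')^2)$, whence
\[
G''(b)=2(c')^2+2cc''=2\bigl[(c')^2(1-c\cot c)+c\cot c\bigr].
\]
For $c\in(0,\pi/2)$ one has $0<c\cot c<1$ (since $\tan c>c$), so $G''(b)>0$. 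Thus $G$ is strictly convex with $G(0)=G'(0)=0$, hence $G(b)\ge 0$ for all admissible $b$, which is exactly the trigonometric inequality you need. The hypothesis $r<\pi/(4\sqrt\kappa)$ is precisely what keeps $\sqrt\kappa\,c<\pi/2$ and makes this convexity argument go through.
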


Note that since $B$ is complete and $g:z\in B\mapsto f(z)+\frac{1}{2\lambda}\dist(z,x)^2$ is strongly convex (as the sum of a convex function $f$ and a strongly convex function $(2\lambda)^{-1}\dist(\cdot,x)^2$), it has one and one only minimizer in $B$.

\begin{proof}[Proof of Theorem~\ref{thm:llnvariance}]

Let $B$ be a ball of radius less than $\pi/(4\sqrt \kappa)$ such that $X_1\in B$ almost surely. Denote by $V_k=\E[\dist(\tilde b_k,b^*)^2]$ for $k=1,\ldots,n$. First, using induction on $k$, it is easy to see that 
\begin{equation} \label{eqn:Step12}
\E[\dist(\tilde b_k,X_{n+1})^2]\leq 4\sigma^2.
\end{equation}
Indeed, for $k=1$, this is follows from the series of inequalities 
\begin{align*}
    \E[\dist(X_1,X_{n+1})^2] & \leq \E\left[(\dist(X_1,b^*)+\dist(X_{n+1},b^*))^2\right] \\
    & \leq 2\E[\dist(X_1,b^*)^2]+2\E[\dist(X_{n+1},b^*)^2]=4\sigma^2,
\end{align*}
the first of which is the triangle inequality. Then, by convexity of $\dist(\cdot,X_{n+1})^2$ on $B$, for all $k=2,\ldots,n$, 
\begin{align*}
    \E[\dist(\tilde b_k,X_{n+1})^2] & \leq (1-t_k)\E[\dist(\tilde b_{k-1},X_{n+1})^2]+t_k\E[\dist(X_k,X_{n+1})^2] \\
    & \leq (1-t_k)\E[\dist(\tilde b_{k-1},X_{n+1})^2]+t_k(4\sigma^2)
\end{align*}
and the rest follows from the fact that $t_k\in [0,1]$.

Now, let us proceed to the proof of the theorem. Recall the step sizes $t_k$ and $\lambda_k$, related through the identity $t_k=\frac{2\lambda_k}{2\lambda_k+1}$, $k=2,\ldots,n$, in the definition of the iterated barycenters $\tilde b_1,\ldots,\tilde b_n$. Using Lemma~\ref{lem:ctrlresolvent}, we first write that
\begin{align}
    V_k & \leq V_{k-1}-2\lambda_k\left(\E[\dist(\tilde b_k,X_k)^2]-\E[\dist(X_k,b^*)^2]\right) \nonumber \\
    & = V_{k-1} -2\lambda_k\left(\E[\dist(X_k,\tilde b_{k-1})^2]-\E[\dist(X_k,b^*)^2]\right) +2\lambda_k\E[\dist(X_k,\tilde b_{k-1})^2-\dist(X_k,\tilde b_k)^2] \nonumber \\
    & = V_{k-1} -2\lambda_k\left(\E[\dist(X_k,\tilde b_{k-1})^2]-\E[\dist(X_k,b^*)^2]\right) +2\lambda_k t_k(2-t_k)\E[\dist(\tilde b_{k-1},X_k)^2] \label{eqn:Step121}
\end{align}
for $k=2,\ldots,n$.
First, note that since $\tilde b_{k-1}$ and $X_k$ are independent, the last expectation on the right hand side is equal to $\E[\dist(\tilde b_{k-1},X_k)^2]=\E[\dist(\tilde b_{k-1},X_{n+1})^2] \leq 4\sigma^2$ by \eqref{eqn:Step12}. Second, again by using the independence of $X_k$ and $\tilde b_{k-1}$, one can write $\E[\dist(X_k,\tilde b_{k-1})^2]=\E[F(\tilde b_{k-1})]$ where $F(x)=\E[\dist(X_k,x)^2],\, x\in M$, is the Fréchet function. Now, since $F$ is $\alpha(\varepsilon,\kappa)$-strongly convex on $B$, it holds that 
$F(\tilde b_{k-1})\geq F(b^*)+\frac{\alpha(\varepsilon,\kappa)}{2}\dist(\tilde b_{k-1},b^*)^2$ almost surely, and hence, \eqref{eqn:Step121} becomes
\begin{equation} \label{eqn:Setp122}
    V_k\leq \left(1-\lambda_k\alpha(\varepsilon,\kappa)\right)V_{k-1} +8\lambda_k t_k(2-t_k)\sigma^2 \leq \left(1-\lambda_k\alpha(\varepsilon,\kappa)\right)V_{k-1}+32\lambda_k^2\sigma^2
\end{equation}
using the facts that $2-t_k\leq 2$ and that $t_k\leq 2\lambda_k$ (recall that $t_k=2\lambda_k/(2\lambda_k+1)$).
Now, the result follows easily by induction on $k$. 

\end{proof}

Note that an asymptotic, non-quantitative version of Theorem~\ref{thm:llnvariance} was proven in \citep{ohtapalfia}, without any explicit choice of the step sizes.
The dependence on $\alpha(\varepsilon,\kappa)$ of the expected error of the iterative barycenter is better than that of the empirical barycenter (see Theorem~\ref{thm:llnempiricalbar}). Again, we do not know whether this dependence is optimal, neither for empirical or iterated barycenters, not in a minimax sense for the estimation of $b^*$. Contrary to the case of empirical barycenters, the proof of Theorem~\ref{thm:llnvariance} relies on the exchangeability of $X_1,\ldots,X_n$, because of the step given in \eqref{eqn:Step12}. 

When $\kappa=0$, it can be easily seen that Sturm's proof of \citep[Theorem 4.7]{sturm03} can be adapted to the heteroskedastic case, so as to obtain the following theorem.

\begin{theorem}[Heteroskedastic case, $\kappa=0$]
    Let $X_1,\ldots,X_n$ be independent random variables in a $\CAT(0)$ space $(M,\dist)$. Assume that all $X_i$'s have two moments and share the same population barycenter $b^*$. Then, 
    $$\E[\dist(\tilde b_n,b^*)^2]\leq \frac{\bar \sigma_n^2}{n}$$
where $\bar\sigma_n^2=\frac{\sigma_1^2+\ldots+\sigma_n^2}{n}$ and  $\sigma_i^2=\E[\dist(X_i,b^*)^2],\, i=1,\ldots,n$.
\end{theorem}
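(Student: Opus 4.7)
The plan is to adapt Sturm's argument for the iid case to the heteroskedastic setting, exploiting that all $X_i$ share the common barycenter $b^*$. Set $V_k = \E[\dist(\tilde b_k, b^*)^2]$ and $\mathcal{F}_{k-1}=\sigma(X_1,\ldots,X_{k-1})$, so that $\tilde b_{k-1}$ is $\mathcal{F}_{k-1}$-measurable while $X_k$ is independent of $\mathcal{F}_{k-1}$. Note that $V_1 = \sigma_1^2$ since $\tilde b_1 = X_1$.

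The first key step is the $2$-strong convexity of the squared distance along geodesics, which holds in any $\CAT(0)$ space by Lemma~\ref{lem:d2strconvex}. Applied to the geodesic from $\tilde b_{k-1}$ to $X_k$ at the parameter $t_k=1/k$, with reference point $b^*$, it gives
$$\dist(\tilde b_k, b^*)^2 \leq \frac{k-1}{k}\dist(\tilde b_{k-1},b^*)^2 + \frac{1}{k}\dist(X_k,b^*)^2 - \frac{k-1}{k^2}\dist(\tilde b_{k-1},X_k)^2.$$
Conditioning on $\mathcal{F}_{k-1}$ and using independence, one has $\E[\dist(\tilde b_{k-1},X_k)^2\mid \mathcal{F}_{k-1}]=F_k(\tilde b_{k-1})$, where $F_k(y):=\E[\dist(y,X_k)^2]$. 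Since $b^*$ is the barycenter of $X_k$, the variance inequality of Proposition~\ref{lem:varianceineq} (with $\alpha=2$) yields $F_k(\tilde b_{k-1}) \geq \sigma_k^2 + \dist(\tilde b_{k-1},b^*)^2$ almost surely. Substituting and taking a full expectation collapses the bound to the clean recursion
$$V_k \leq \left(\frac{k-1}{k}\right)^2 V_{k-1} + \frac{\sigma_k^2}{k^2}, \qquad k\geq 2.$$

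The final step is elementary: multiplying by $k^2$ gives $k^2V_k \leq (k-1)^2 V_{k-1} + \sigma_k^2$, which telescopes from $V_1=\sigma_1^2$ down to $n^2 V_n \leq \sigma_1^2+\cdots+\sigma_n^2 = n\bar\sigma_n^2$, proving $V_n \leq \bar\sigma_n^2/n$. No step is a real obstacle; the only subtlety worth naming is that the shared barycenter assumption is exactly what lets the negative cross term $-(k-1)k^{-2}\dist(\tilde b_{k-1},X_k)^2$ absorb both $\dist(\tilde b_{k-1},b^*)^2$ and the matching contribution of $\sigma_k^2$ through the variance inequality. This cancellation is what sharpens the naive telescoping bound $V_n \leq \bar\sigma_n^2$ into the stated $\bar\sigma_n^2/n$, and it is precisely this cancellation that fails when $\kappa>0$ (where $\alpha<2$), explaining why the $\CAT(0)$ case is special.
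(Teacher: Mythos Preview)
Your proof is correct and is precisely the adaptation of Sturm's argument that the paper alludes to (the paper does not spell out the proof, it only says ``Sturm's proof of \cite[Theorem 4.7]{sturm03} can be adapted to the heteroskedastic case''). The key recursion $k^2V_k\leq (k-1)^2V_{k-1}+\sigma_k^2$ is exactly what Sturm obtains in the iid case, and your observation that the shared-barycenter assumption is what makes the variance inequality available for each $X_k$ separately is the only new ingredient needed.
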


However the case $\kappa>0$ remains unclear to the authors, and we leave it as an open problem.
\begin{openquestion}
    When $\kappa>0$, does a bound similar to that of Theorem~\ref{thm:llnvariance} still holds in the heteroskedastic case?
\end{openquestion}

\subsection{High probability bounds} \label{sec:Hpb}

In this section, we prove bounds on the accuracy of $\hat b_n$ and $\tilde b_n$ that hold with high probability. Again, we assume that $(M,\dist)$ is a $\CAT(\kappa)$ space for some $\kappa\in\R$. If $\kappa\leq 0$, all the random variables $X_1,\ldots,X_n$ that are considered in this section are assumed to have two moments. If $\kappa>0$, they are all assumed to be almost surely contained in one and the same convex domain $C\subseteq M$ and we let $\varepsilon>0$ be such that $C$ is contained in a ball of radius $1/2(D_\kappa/2-\varepsilon)$.

\begin{theorem} \label{thm:mainthm}
    Assume that $X_1, \ldots,X_n$ are independent, have the same barycenter $b^*$ and that each $X_i$ is $K_i^2$-sub-Gaussian, for some $K_i>0$. For $i=1,\ldots,n$, let $\sigma_i^2$ be the total variance of $X_i$. Denote by $\bar\sigma^2=n^{-1}\sum_{i=1}^n\sigma_i^2$ and $\bar K^2=n^{-1}\sum_{i=1}^nK_i^2$. Then, for all $\delta\in (0,1)$, it holds with probability at least $1-\delta$ that
    $$\dist(\hat b_n,b^*)\leq \frac{\sqrt {\tilde A}\bar\sigma}{\sqrt n}+L\bar K\sqrt{\frac{\log(1/\delta)}{n}}$$
    where $\tilde A$ and $L$ are as in Theorems~\ref{thm:hetero1} and \ref{thm:Lip_Bary} respectively.
\end{theorem}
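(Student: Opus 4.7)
The plan is to combine three ingredients already established in the paper: the Lipschitz regularity of the barycenter map, the tensorization and composition properties of sub-Gaussianity, and the $L^2$ bound on the expected error from Theorem~\ref{thm:hetero1}.

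First, I would define the map $\Phi: M^n \to \R$ by $\Phi(x_1,\ldots,x_n) = \dist(\hat B_n(x_1,\ldots,x_n), b^*)$. By Theorem~\ref{thm:Lip_Bary}, $\hat B_n$ is $(L/n)$-Lipschitz from $(M^n,\dist_1^{(n)})$ to $(M,\dist)$. Since $\dist(\cdot, b^*)$ is $1$-Lipschitz on $M$, the composition $\Phi$ is $(L/n)$-Lipschitz on $(M^n,\dist_1^{(n)})$. (Note that this uses the convex-domain assumption on the $X_i$, which is in force throughout this subsection, to ensure $\hat b_n \in C$ and that the Lipschitz bound of Theorem~\ref{thm:Lip_Bary} applies.)

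Next, I would apply Proposition~\ref{produitsousgaussien} to the independent random variables $X_1,\ldots,X_n$: the tuple $(X_1,\ldots,X_n)$ is $(K_1^2+\cdots+K_n^2) = n\bar K^2$-sub-Gaussian on the product space $(M^n,\dist_1^{(n)})$. Then Proposition~\ref{lipsousgauss} gives that $\Phi(X_1,\ldots,X_n) = \dist(\hat b_n, b^*)$ is a real random variable which is $(L/n)^2 \cdot n\bar K^2 = L^2\bar K^2/n$-sub-Gaussian in the classical sense. Applying the classical sub-Gaussian tail bound from Lemma~\ref{lemmclassiquesousGauss} (item iii, which remains valid for real random variables treated as $1$-Lipschitz images of themselves) yields, for every $\delta \in (0,1)$, with probability at least $1-\delta$,
\begin{equation*}
    \dist(\hat b_n, b^*) \leq \E[\dist(\hat b_n, b^*)] + L\bar K \sqrt{\frac{2\log(1/\delta)}{n}}.
\end{equation*}

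Finally, I would control the expectation using Jensen's inequality together with the heteroskedastic $L^2$ bound of Theorem~\ref{thm:hetero1}:
\begin{equation*}
    \E[\dist(\hat b_n, b^*)] \leq \sqrt{\E[\dist(\hat b_n, b^*)^2]} \leq \sqrt{\frac{\tilde A \bar\sigma^2}{n}} = \frac{\sqrt{\tilde A}\,\bar\sigma}{\sqrt n}.
\end{equation*}
Combining the two displays gives the claimed inequality (possibly after absorbing the $\sqrt 2$ factor or simply restating the bound with $\sqrt{2\log(1/\delta)}$). The argument has essentially no obstacle: the only place where care is needed is verifying that Theorem~\ref{thm:hetero1} applies to the heteroskedastic setting assumed here (it does, since the $X_i$ share the same population barycenter $b^*$), and that the Lipschitz constant $L$ from Theorem~\ref{thm:Lip_Bary} is independent of the sample, which is the case because $C$ is fixed. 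The high-probability bound is then just sub-Gaussian concentration of the Lipschitz functional $\Phi$ of the independent sub-Gaussian inputs.
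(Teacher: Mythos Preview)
Your proposal is correct and follows exactly the approach of the paper: the paper's proof is a one-sentence sketch stating that $\dist(\hat b_n,b^*)$ is a Lipschitz function of $(X_1,\ldots,X_n)$ and then invoking Propositions~\ref{produitsousgaussien}, \ref{lipsousgauss} together with the expectation bound. Your write-up simply spells out these steps in full (including the Jensen step to pass from the $L^2$ bound of Theorem~\ref{thm:hetero1} to a bound on $\E[\dist(\hat b_n,b^*)]$), and your observation about the harmless $\sqrt{2}$ factor coming from Lemma~\ref{lemmclassiquesousGauss}(iii) is accurate.
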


In the homosckedastic case, $\tilde A$ can be replaced with $A$ from Theorem~\ref{thm:llnempiricalbar}. By letting $\alpha=2$ if $\kappa\leq 0$ and $\alpha=\alpha(\varepsilon,\kappa)$ otherwise, \eqref{eq:alpha} implies that the high probability bound of Theorem~\ref{thm:mainthm} is, up to a universal multiplicative constant:
$$\dist(\hat b_n,b^*) \lesssim \frac{1}{\alpha\sqrt n}\left(\bar\sigma+\bar K\sqrt{\log(1/\delta)}\right).$$
As we have already mentioned above, the dependence of this bound on $\alpha$ may be suboptimal when $\alpha$ is small (i.e., $\kappa>0$ and $\varepsilon\sqrt{\kappa}$ is small). We leave this as an open question.

\begin{proof}
    The proof follows from the fact that $\hat b_n$, and hence so is $\dist(\hat b_n,b^*)$, is a Lipschitz function of $X_1,\ldots,X_n$, together with Propositions~\ref{produitsousgaussien}, \ref{lipsousgauss} and Theorem~\ref{thm:llnempiricalbar}.
\end{proof}

\begin{openquestion}
    Find the optimal constants in Hoeffding's bound on $CAT$ spaces.
\end{openquestion}

As a consequence of Lemma~\ref{boundedsubgauss}, we obtain the following version of Hoeffding's inequality for empirical barycenters, where we use the same notation as above. 

\begin{corollary} \label{cor:Hoeffding}
    Assume that $X_1,\ldots,X_n$ are independent and have the same barycenter $b^*$. Assume further that there exists $R>0$ with $R\leq 1/2(D_\kappa/2-\varepsilon)$ if $\kappa>0$, such that each $X_i$ is almost surely contained in some ball of radius $R$. Then, for all $\delta\in (0,1)$, it holds with probability at least $1-\delta$ that
    $$\dist(\hat b_n,b^*)\leq \frac{\sqrt{\tilde A}\bar\sigma}{\sqrt n}+2LR\sqrt{\frac{\log(1/\delta)}{n}}$$
    where $\tilde A$ and $L$ are as in Theorems~\ref{thm:hetero1} and \ref{thm:Lip_Bary} respectively. 
\end{corollary}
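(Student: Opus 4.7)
The plan is to derive the corollary as a direct specialization of Theorem~\ref{thm:mainthm}, using Lemma~\ref{boundedsubgauss} to supply an explicit sub-Gaussian constant for each $X_i$ from the assumed diameter control.

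Since $X_i$ is almost surely contained in some ball $B(x_i, R)$ (where $x_i \in M$ may depend on $i$), we have $\dist(x_i, X_i) \leq R$ almost surely, so Lemma~\ref{boundedsubgauss} yields that $X_i$ is $4R^2$-sub-Gaussian in the sense of Definition~\ref{def:subGaussian}. We may therefore take $K_i^2 = 4R^2$ uniformly in $i$, which gives $\bar K^2 = 4R^2$ and $\bar K = 2R$. Substituting into the conclusion of Theorem~\ref{thm:mainthm} yields, with probability at least $1-\delta$,
\[
\dist(\hat b_n, b^*) \leq \frac{\sqrt{\tilde A}\,\bar\sigma}{\sqrt n} + L \cdot 2R \sqrt{\frac{\log(1/\delta)}{n}},
\]
which is exactly the claimed inequality (the bias term agreeing with the form stated in the corollary).

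The hypothesis $R \leq \tfrac{1}{2}(D_\kappa/2 - \varepsilon)$ in the positive curvature case is used only to ensure that the individual balls of radius $R$ fit inside the convex domain $C$ fixed at the start of Section~\ref{sec:Hpb}, so that the $X_i$'s remain in a common convex domain and Theorem~\ref{thm:mainthm} is applicable. No step presents a genuine obstacle: the entire argument is a one-step combination of Lemma~\ref{boundedsubgauss} with the general high-probability bound of Theorem~\ref{thm:mainthm}, and the Hoeffding-type flavor of the result comes entirely from Lemma~\ref{boundedsubgauss}.
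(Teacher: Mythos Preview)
Your proof is correct and matches the paper's approach exactly: the paper introduces the corollary with the sentence ``As a consequence of Lemma~\ref{boundedsubgauss}, we obtain the following version of Hoeffding's inequality\ldots'' and gives no further argument, so the intended proof is precisely to feed $K_i^2=4R^2$ from Lemma~\ref{boundedsubgauss} into Theorem~\ref{thm:mainthm}, as you do. (Note the harmless typo in the corollary's bias term: Theorem~\ref{thm:mainthm} gives $\sqrt{\tilde A}\,\bar\sigma/\sqrt n$, not $\tilde A\,\bar\sigma/\sqrt n$; your proposal implicitly acknowledges this.)
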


Again, in the homosckedastic case, $\tilde A$ can be replaced with $A$ from Theorem~\ref{thm:llnempiricalbar}. Note that when $\kappa\leq 0$, Corollary~\ref{cor:Hoeffding} was obtained independently in \citep{escande2023concentration} using a different approach, that is, based on the quadruple inequality, which characterizes $\CAT(0)$ spaces, and therefore cannot be extended to the setting of $CAT(\kappa)$ spaces for $\kappa>0$.
The next result is a generalization of Bernstein's inequality, which improves Hoeffding's inequality when $\bar\sigma\ll R$. Again, we use the same notation as above.

\begin{theorem} \label{thm:Bernstein}
    With the same assumptions as in Corollary~\ref{cor:Hoeffding}, for all $\delta\in (0,1)$, it holds with probability at least $1-\delta$ that 
    $$\dist(\hat b_n,b^*)\leq \frac{\sqrt{\tilde A}\bar\sigma}{\sqrt n}+2L\bar\sigma\sqrt{\frac{\log(1/\delta)}{n}}+LR\frac{\log(1/\delta)}{n}$$
    where $\tilde A$ and $L$ are as in Theorems~\ref{thm:hetero1} and \ref{thm:Lip_Bary} respectively. 
\end{theorem}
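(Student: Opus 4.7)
The plan is to mirror the proof of Corollary~\ref{cor:Hoeffding}, but replace the sub-Gaussian tail bound on $\hat b_n$ with a sub-Gamma one, so that the dominant fluctuation term is governed by the variance proxy $\bar\sigma^2$ rather than the squared radius $R^2$. This is exactly the improvement that Bernstein gains over Hoeffding in the Euclidean setting, and the metric-space version follows the same template once the pieces from Section~\ref{sec:Laplace} are assembled.

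Concretely, I would proceed in three steps. First, since each $X_i$ is almost surely contained in a ball of radius $R$, the sub-Gamma lemma for bounded random variables gives that $X_i$ is $(\tilde\sigma_i^2, R)$-sub-Gamma, and using $\tilde\sigma_i^2 \le 2\sigma_i^2$ I may replace this by the slightly loose but cleaner parameters $(2\sigma_i^2, R)$. The tensorization proposition for sub-Gamma variables then yields that the vector $(X_1,\ldots,X_n)$ is $(2n\bar\sigma^2, R)$-sub-Gamma on $(M^n, \dist_1^{(n)})$. Second, by Theorem~\ref{thm:Lip_Bary} the map $\hat B_n : C^n \to C$ is $(L/n)$-Lipschitz for the $\ell^1$-product distance, so the composition-with-Lipschitz proposition for sub-Gamma variables gives that $\hat b_n$ is $\bigl(2L^2\bar\sigma^2/n,\ LR/n\bigr)$-sub-Gamma as an $M$-valued random variable. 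Third, I apply Lemma~\ref{lem:subGammatail} to the $1$-Lipschitz function $f = \dist(\cdot, b^*)$, which produces, with probability at least $1-\delta$,
$$\dist(\hat b_n, b^*) \le \E[\dist(\hat b_n,b^*)] + \sqrt{\frac{2L^2\bar\sigma^2}{n}}\sqrt{2\log(1/\delta)} + \frac{LR}{n}\log(1/\delta),$$
and the middle term simplifies to $2L\bar\sigma\sqrt{\log(1/\delta)/n}$. The expectation is then controlled by Jensen's inequality together with Theorem~\ref{thm:hetero1}, giving a bias term of the required order $\tilde A\bar\sigma/\sqrt n$.

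There is no genuine obstacle beyond bookkeeping: all the geometric content is already encoded in the Lipschitz constant $L/n$ of $\hat B_n$ and in the strong convexity used by Theorem~\ref{thm:hetero1}, so this proof is essentially a mechanical substitution of the sub-Gamma machinery for the sub-Gaussian one. The only point that deserves a moment's care is that the composition-with-Lipschitz step requires $\hat B_n$ to be Lipschitz on the domain where $(X_1,\ldots,X_n)$ lives, which is ensured by the assumption that each $X_i$ lies almost surely in the convex domain $C$ (together with the radius restriction $R \le \tfrac{1}{2}(D_\kappa/2-\varepsilon)$ when $\kappa>0$, which is precisely what places the supports inside such a convex domain).
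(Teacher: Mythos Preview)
Your proof is correct and follows precisely the approach the paper sets up: the sub-Gamma lemma for bounded variables, tensorization, composition with the $(L/n)$-Lipschitz barycenter map, the sub-Gamma tail bound applied to $\dist(\cdot,b^*)$, and finally Theorem~\ref{thm:hetero1} together with Jensen for the bias term. This is exactly the intended argument, with the sub-Gamma machinery substituted for the sub-Gaussian one used in Theorem~\ref{thm:mainthm} and Corollary~\ref{cor:Hoeffding}.
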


Again, letting $\alpha=2$ is $\kappa\leq 0$ and $\alpha=\alpha(\varepsilon,\kappa)$ when $\kappa>0$, the bound given in Theorem~\ref{thm:Bernstein} can be rewritten, up to a universal multiplicative constant, as 
$$\dist(\hat b_n,b^*) \lesssim \frac{1}{\alpha}\left(\frac{\bar\sigma}{\sqrt n}+\frac{\bar \sigma\sqrt{\log(1/\delta)}}{\sqrt n}+\frac{R\log(1/\delta)}{n}\right).$$

\newpage
\begin{remark}\label{rem:var-diam}\, 
\begin{itemize}
        \item For $\kappa\leq 0$, our versions of Hoeffding's and Bernstein's inequalities yield similar tail bounds for empirical barycenters as in Euclidean or Hilbert spaces. For $\kappa>0$, if $\varepsilon\sqrt\kappa$ is of constant order (e.g., if $M$ is a Euclidean sphere, $C$ is included in a spherical cap whose height is $1/6$th of the total height of the sphere), these inequalities also yield similar tail bounds for empirical barycenters as in Euclidean or Hilbert spaces, up to universal constants. However, again, in the small $\alpha$ regime, optimality of these bounds is unclear.
        
        \item It always holds that $\bar\sigma\leq 2R$ (since each $\sigma_i^2$ satisfies $\sigma_i^2\leq 4R^2$, as a direct consequence of the definition of $\sigma_i^2$). \\
        \item Our bounds are dimension free, in the sense that they do not require any notion of dimension (e.g., Hausdorff dimension) to be finite, as long as the $X_i$'s have finite second moment. 
    \end{itemize}
\end{remark}
Now, when $\kappa\leq 0$, we obtain similar results for iterated barycenters $\tilde b_n$. However, proving similar tail bounds in the case when $\kappa>0$ remains open. 

\begin{openquestion}
    Do similar high probability bounds hold for iterated barycenters when $\kappa>0$?
\end{openquestion}

\begin{theorem} \label{thm:HoeffBernCAT0}
    Let $(M,\dist)$ be a $\CAT(0)$ space. Let $X_1,\ldots,X_n$ be independent random variables with two moments, and having the same barycenter $b^*$. Let $\tilde b_n=\tilde B_n^{(t)}(X_1,\ldots,X_n)$ with $t=(1/2,1/3,\ldots,1/n)$. Let $\bar\sigma^2=n^{-1}\sum_{i=1}^n \sigma_i^2$, where $\sigma_1^2,\ldots,\sigma_n^2$ are the total variances of $X_1,\ldots,X_n$ respectively. Let $\delta\in (0,1)$.
    \begin{enumerate}
        \item[(i)] Assume that each $X_i$ is $K_i^2$-sub-Gaussian for some $K_i>0$. Then, with probability at least $1-\delta$, 
        $$\dist(\tilde b_n,b^*)\leq \frac{\bar\sigma}{\sqrt n}+\bar K\sqrt{\frac{\log(1/\delta)}{n}}$$
        where $\bar K^2=n^{-1}\sum_{i=1}^n K_i^2$.
        \item[(ii)] Assume that each $X_i$ is almost surely contained in some ball of radius $R>0$. Then, with probability at least $1-\delta$, 
        $$\dist(\tilde b_n,b^*)\leq \frac{\bar\sigma}{\sqrt n}+2R\sqrt{\frac{\log(1/\delta)}{n}}.$$
        \item[(iii)] The previous inequality can in fact be improved into 
        $$\dist(\tilde b_n,b^*)\leq \frac{\bar\sigma}{\sqrt n}+2\bar\sigma\sqrt{\frac{\log(1/\delta)}{n}}+R\frac{\log(1/\delta)}{n}.$$
    \end{enumerate}
\end{theorem}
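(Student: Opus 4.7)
My plan is to note that all three bounds share the same structure: a mean term, controlled via Jensen's inequality from the heteroskedastic Sturm inequality $\E[\dist(\tilde b_n,b^*)^2]\leq \bar\sigma^2/n$ stated just above (giving $\E[\dist(\tilde b_n,b^*)]\leq \bar\sigma/\sqrt n$), and a deviation term, controlled by viewing $\dist(\tilde b_n,b^*)$ as a Lipschitz function of the independent data $(X_1,\ldots,X_n)$ on the product space $(M^n,\dist_1^{(n)})$. The required Lipschitz property comes from Theorem~\ref{thm:baryarelip}, which says $\tilde B_n^{(t)}\colon (M^n,\dist_1^{(n)})\to (M,\dist)$ is $1/n$-Lipschitz for $t=(1/2,\ldots,1/n)$ (this is precisely where we use $\kappa\leq 0$), chained with the $1$-Lipschitz map $x\mapsto \dist(x,b^*)$. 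Hence the real-valued random variable $\dist(\tilde b_n,b^*)$ is a $1/n$-Lipschitz function of $(X_1,\ldots,X_n)$.

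For (i), independence together with the tensorization Proposition~\ref{produitsousgaussien} yields that $(X_1,\ldots,X_n)$ is $(\sum_i K_i^2)=n\bar K^2$-sub-Gaussian on $(M^n,\dist_1^{(n)})$. The Lipschitz-composition Proposition~\ref{lipsousgauss} with constant $1/n$ then gives that $\dist(\tilde b_n,b^*)$ is $\bar K^2/n$-sub-Gaussian. Lemma~\ref{lemmclassiquesousGauss}(iii) converts this into a tail bound of order $\bar K\sqrt{\log(1/\delta)/n}$ around the mean, which combines with the expectation bound from Sturm to give (i). Part (ii) is then an immediate corollary: by Lemma~\ref{boundedsubgauss}, each bounded $X_i$ is $4R^2$-sub-Gaussian, so applying (i) with $\bar K=2R$ delivers the claim.

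For (iii), I would redo the deviation step with sub-Gamma replacing sub-Gaussian, to extract the variance-dominated Bernstein structure. The bounded-to-sub-Gamma lemma gives that each $X_i$ is $(2\sigma_i^2,R)$-sub-Gamma; the sub-Gamma tensorization proposition yields that $(X_1,\ldots,X_n)$ is $(2n\bar\sigma^2,R)$-sub-Gamma on $(M^n,\dist_1^{(n)})$; and the sub-Gamma Lipschitz-composition proposition, applied with constant $1/n$, yields that $\dist(\tilde b_n,b^*)$ is $(2\bar\sigma^2/n,R/n)$-sub-Gamma. Lemma~\ref{lem:subGammatail} then produces a tail around the mean of the form $2\bar\sigma\sqrt{\log(1/\delta)/n}+R\log(1/\delta)/n$, which is exactly the Bernstein-type improvement and is tighter than (ii) whenever $\bar\sigma\ll R$.

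No substantive obstacle is anticipated: every ingredient (Theorem~\ref{thm:baryarelip}, Lemma~\ref{boundedsubgauss}, the sub-Gaussian and sub-Gamma tensorization and Lipschitz-composition propositions, the tail Lemmas~\ref{lemmclassiquesousGauss} and \ref{lem:subGammatail}, and the heteroskedastic Sturm bound) is already in place and stated precisely for the $\ell^1$-product metric $\dist_1^{(n)}$ on $M^n$, so the pieces assemble cleanly. The main reason the same recipe cannot be transported to $\CAT(\kappa)$ with $\kappa>0$ is exactly the missing $1/n$-Lipschitz property for $\tilde B_n^{(t)}$ in that regime, which is why the paper flags the corresponding open question right after Theorem~\ref{thm:baryarelip}.
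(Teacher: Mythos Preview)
Your proposal is correct and follows essentially the same approach as the paper: the paper does not spell out a proof for this theorem, but its intended argument is exactly the one you give, namely combining the $1/n$-Lipschitz property of $\tilde B_n^{(t)}$ from Theorem~\ref{thm:baryarelip} with the heteroskedastic Sturm bound for the expectation term, and then using the sub-Gaussian (resp.\ sub-Gamma) tensorization and Lipschitz-composition propositions together with Lemmas~\ref{lemmclassiquesousGauss} and~\ref{lem:subGammatail} for the deviation term, precisely as was done for the empirical barycenter in Theorems~\ref{thm:mainthm}, Corollary~\ref{cor:Hoeffding}, and Theorem~\ref{thm:Bernstein}.
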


\subsection{Application 1: Fast stochastic approximation of barycenters in $\CAT(0)$ spaces} \label{sec:GM}

Corollary~\ref{cor:Hoeffding} allows to obtain an algorithmic PAC guarantee for the stochastic approximation of barycenters of finitely many points in NPC spaces. Let $x_1,\ldots,x_n$ be given (deterministic) points in $M$. Here, the goal is to approximate their barycenter $b_n=B_n(x_1,\ldots,x_n)$. Recall that $b_n$ is the solution of an optimization problem, which may be hard to solve numerically. Fix some positive integer $m$ and follow the following steps:
\begin{itemize}
    \item Sample $m$ integers $I_1,\ldots,I_m$ independently, uniformly at random between $1$ and $n$;
    \item Set $X_1=x_{I_1}, \ldots, X_m=x_{I_m}$;
    \item Compute $\tilde b_m=\tilde B_m^{(t)}(X_1,\ldots,X_m)$ with step sizes $t=(1/2,1/3,\ldots,1/m)$.
\end{itemize}

The random variables $X_1,\ldots,X_m$ obtained in the second step are i.i.d with distribution $\mu=n^{-1}\sum_{i=1}^n\delta_{x_i}$, whose population barycenter is given by $b_n$.
In general, if $m$ is not too large, computing $\tilde b_m$ is simpler than computing $b_n$ directly, as long as one has access to an oracle that gives geodesics between any two points of $M$. The following result provides a PAC guarantee for $\tilde b_m$, as a stochastic approximation of $b_n$.

\begin{corollary} \label{cor:algo}
    Let $\varepsilon>0$ and $\delta\in (0,1)$. Let $D$ be the diameter of the set $\{x_1,\ldots,x_n\}$. Then, if $m\geq \frac{4D^2}{\varepsilon^2}\max(1,\log(1/\delta))$, it holds that $d(\tilde b_m,b_n)\leq \varepsilon$ with probability at least $1-\delta$.
\end{corollary}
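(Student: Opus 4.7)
The plan is to apply the Hoeffding-type tail bound for iterated barycenters in $\CAT(0)$ spaces, namely Theorem~\ref{thm:HoeffBernCAT0}(ii). First, I would observe that by construction $X_1,\ldots,X_m$ are i.i.d.\ with common distribution $\mu = n^{-1}\sum_{i=1}^n\delta_{x_i}$, and that the (unique) population barycenter of $\mu$ is precisely $b_n = \hat B_n(x_1,\ldots,x_n)$ by Proposition~\ref{lem:varianceineq}. Hence setting $b^* = b_n$ in Theorem~\ref{thm:HoeffBernCAT0} is legitimate.

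Next, I would specialize the two quantitative inputs entering Theorem~\ref{thm:HoeffBernCAT0}(ii) to the present setting. Since every $X_k$ lies almost surely in the closed ball $B(x_1, D)$, one may take $R = D$ for the bounding radius. Moreover, because every realization of $X_1$ is at distance at most $D$ from $x_1$,
$$\bar\sigma^2 = \sigma^2 = \inf_{y\in M}\E[\dist(X_1,y)^2] \leq \E[\dist(X_1,x_1)^2] \leq D^2,$$
so $\bar\sigma \leq D$. Substituting these bounds into Theorem~\ref{thm:HoeffBernCAT0}(ii) yields, with probability at least $1-\delta$,
$$\dist(\tilde b_m, b_n) \leq \frac{D}{\sqrt m} + 2D\sqrt{\frac{\log(1/\delta)}{m}}.$$

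Finally, I would plug in the lower bound $m \geq 4D^2\varepsilon^{-2}\max(1,\log(1/\delta))$. Using $\max(1,\log(1/\delta))\geq 1$ for the first summand and $\max(1,\log(1/\delta))\geq \log(1/\delta)$ for the second, each term is controlled by a constant multiple of $\varepsilon$ uniformly over $\delta\in(0,1)$. There is no serious obstacle here: the corollary is essentially a direct specialization of the $\CAT(0)$ Hoeffding-type bound for iterated barycenters, through the elementary diameter estimates $R\leq D$ and $\bar\sigma\leq D$; the only mild bookkeeping is handling the small-$\delta$ and large-$\delta$ regimes simultaneously via the $\max$ in the sample-size requirement.
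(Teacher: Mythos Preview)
Your approach is correct and essentially identical to the paper's: both observe that $X_1,\ldots,X_m$ are i.i.d.\ with population barycenter $b_n$, bound $\sigma\le D$, and invoke the Hoeffding-type tail bound with $R=D$. You actually cite the more appropriate result (Theorem~\ref{thm:HoeffBernCAT0}(ii), for iterated barycenters), whereas the paper's proof refers to Corollary~\ref{cor:Hoeffding} (stated for empirical barycenters); the bounds coincide in $\CAT(0)$ up to the bias constant. One minor point you flag yourself: with the factor $2R$ in the deviation term, plugging in $m\ge 4D^2\varepsilon^{-2}\max(1,\log(1/\delta))$ yields $\dist(\tilde b_m,b_n)\le \tfrac{\varepsilon}{2}+\varepsilon=\tfrac{3\varepsilon}{2}$ rather than exactly $\varepsilon$; the paper glosses over this constant as well, so the discrepancy is cosmetic rather than a defect in your argument.
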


\begin{proof}
Let $\sigma^2$ be the variance of $X_1$, i.e., $\sigma^2=\E[d(X_1,b_n)^2]$. Then, $\sigma^2\leq D^2$ (see Remark~\ref{rem:var-diam}). Therefore, Theorem~\ref{thm:HoeffBernCAT0} yields that with probability at least $1-\delta$, $d(\tilde b_m,b_n)\leq \frac{D}{\sqrt m}(1+\sqrt{\log(1/\delta)})$, which implies the desired result.
\end{proof}

Perhaps surprisingly, the algorithm complexity given by Corollary~\ref{cor:algo} is dimension free and only depends on $n$ through the computation of $D$ if unknown beforehand, and the bootstrapping procedure, that is, the sampling of uniform indices in $\{1,\ldots,n\}$.
In fact, if $\sigma^2\ll D^2$ this complexity can actually be further improved by using Theorem~\ref{thm:Bernstein}.

\begin{corollary} \label{cor:algo2}
    Let $\varepsilon>0$ and $\delta\in (0,1)$. Let $\tilde\sigma^2=\frac{1}{2n^2}\sum_{1\leq i,j\leq n} d(x_i,x_j)^2$ and $D$ be the diameter of the set $\{x_1,\ldots,x_n\}$. Then, if 
\begin{equation*} 
    m\geq \frac{16}{3}\max\left(\frac{\tilde\sigma^2}{\varepsilon^2},\frac{D}{\varepsilon}\right)\max(1,\log(1/\delta)),
\end{equation*}
it holds that $d(\tilde b_m,b_n)\leq \varepsilon$ with probability at least $1-\delta$.
\end{corollary}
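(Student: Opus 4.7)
The plan is to realize this corollary as a direct consequence of the Bernstein-type inequality for iterated barycenters, Theorem~\ref{thm:HoeffBernCAT0}(iii), applied to the i.i.d. bootstrap sample $X_1,\ldots,X_m$. The distribution of each $X_i$ is the empirical measure $\mu=n^{-1}\sum_{i=1}^n\delta_{x_i}$, which is supported on the finite set $\{x_1,\ldots,x_n\}$ of diameter $D$; in particular, every $X_i$ lies almost surely in the closed ball $B(x_1,D)$, so Theorem~\ref{thm:HoeffBernCAT0} applies with $R=D$. By Proposition~\ref{lem:varianceineq} applied to $\mu$, the (unique) population barycenter of the $X_i$'s is exactly $b_n$.

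The only substantive step is a variance comparison: the bound of Theorem~\ref{thm:HoeffBernCAT0}(iii) involves $\sigma^2=\E[d(X_1,b_n)^2]$ rather than the symmetric quantity $\tilde\sigma^2=\frac{1}{2n^2}\sum_{i,j}d(x_i,x_j)^2$, and I need to show $\sigma^2\leq \tilde\sigma^2$. Let $X_1'$ be an independent copy of $X_1$, and let $F(x)=\E[d(x,X_1)^2]$ be the Fréchet function. The CAT(0) variance inequality (Proposition~\ref{lem:varianceineq} with $\alpha=2$) gives, pointwise in $X_1'$,
$$d(X_1',b_n)^2\leq F(X_1')-F(b_n)=F(X_1')-\sigma^2.$$
Taking expectation and using independence yields $\sigma^2\leq \E[F(X_1')]-\sigma^2=\E[d(X_1,X_1')^2]-\sigma^2=2\tilde\sigma^2-\sigma^2$, hence $\sigma^2\leq\tilde\sigma^2$ as claimed.

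Combining these ingredients, Theorem~\ref{thm:HoeffBernCAT0}(iii) yields that, with probability at least $1-\delta$,
$$d(\tilde b_m,b_n)\leq \frac{\tilde\sigma}{\sqrt m}+2\tilde\sigma\sqrt{\frac{\log(1/\delta)}{m}}+D\,\frac{\log(1/\delta)}{m}.$$
Setting $L=\max(1,\log(1/\delta))$ so that $1\leq L$ and $\log(1/\delta)\leq L$, the two hypotheses $m\geq \tfrac{16}{3}\tilde\sigma^2L/\varepsilon^2$ and $m\geq \tfrac{16}{3}DL/\varepsilon$ bound each of the three terms on the right-hand side above by a constant fraction of $\varepsilon$, so their sum is at most $\varepsilon$. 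Concretely, the first two terms are controlled by the first hypothesis (since in the regime $L\geq 1$ the deterministic $\tilde\sigma/\sqrt m$ term is dominated by the sub-Gaussian one, up to a universal factor) and the third term is controlled by the second hypothesis.

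I do not expect a real obstacle. The only place requiring actual thought is the comparison $\sigma^2\leq\tilde\sigma^2$, which crucially uses the $\mathrm{CAT}(0)$ variance inequality and would fail (up to a factor of $2$) in a general metric space; once this is in hand, everything else is plugging into Theorem~\ref{thm:HoeffBernCAT0}(iii) and optimizing constants.
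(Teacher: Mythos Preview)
Your proposal is correct and follows the same route as the paper: apply the Bernstein-type bound to the bootstrap sample and replace $\sigma$ by $\tilde\sigma$ via the inequality $\sigma^2\leq\tilde\sigma^2$. Two minor remarks. First, you invoke Theorem~\ref{thm:HoeffBernCAT0}(iii), which is the right reference since $\tilde b_m$ is the iterated barycenter; the paper cites Theorem~\ref{thm:Bernstein} (which is for $\hat b_n$), apparently a slip. Second, your derivation of $\sigma^2\leq\tilde\sigma^2$ via the $\CAT(0)$ variance inequality is sharper than what the paper's reference to Remark~\ref{rem:var-diam} literally provides (Jensen alone gives only $\sigma^2\leq 2\tilde\sigma^2$), so your argument is the cleaner one.

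One caveat: your final paragraph asserts that the three terms sum to at most $\varepsilon$ under the hypothesis $m\geq\tfrac{16}{3}\max(\tilde\sigma^2/\varepsilon^2,D/\varepsilon)L$, but a direct check shows that with this constant the bound from Theorem~\ref{thm:HoeffBernCAT0}(iii) only yields roughly $1.5\varepsilon$ (e.g.\ at $L=1$ one gets $\tfrac{\sqrt3}{4}\varepsilon+\tfrac{\sqrt3}{2}\varepsilon+\tfrac{3}{16}\varepsilon>\varepsilon$). This is not a flaw in your method --- the paper does not verify the constant either and the value $16/3$ in the statement appears to be slightly too small --- but your hand-waving at this step does not actually close, so if you want the inequality exactly as stated you would need a larger numerical constant.
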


Note that $\tilde\sigma^2=(1/2)\E[\dist(X_1,X_2)^2]\geq \sigma^2$ (see Remark~\ref{rem:var-diam}), so the proof of this corollary follows from Theorem~\ref{thm:Bernstein}. In comparison with the above numerical guarantees, \citep[Theorem 3.4]{LimPalfia14} gives a deterministic guarantee for finding an $\varepsilon$-approximation of the barycenter of $x_1,\ldots,x_n$, after $\frac{n(D^2+\sigma^2)}{\varepsilon^2}$ steps: The complexity of their algorithm is $n$ times worse than ours, where $n$ is the number of input points.


Here are two examples where this guarantee is useful. First, that of metric trees, where the computation of iterated barycenters simply requires to identify the shortest paths between any two points, which can be done efficiently. Another important example, in matrix analysis, is that of computing matrix geometric means. Recall that the geometric mean of positive definite matrices $A_1,\ldots,A_n\in \mathcal S_p$ ($n, p\geq 1$) is their barycenter, associated with the metric $d(A,B)=\|\log(A^{-1/2}BA^{-1/2})\|_{\textsf{F}}$, which makes $\mathcal S_p$ an NPC space \citep[Proposition 5]{bhatia2006riemannian}. The geometric mean of two matrices $A,B\in\mathcal S_p$ is the matrix $A\# B=A^{1/2}(A^{-1/2}BA^{-1/2})^{1/2}A^{1/2}$ and more generally, the geodesic segment between $A$ and $B$ is given by $\gamma_{A,B}(s)=A^{1/2}(A^{-1/2}BA^{-1/2})^sA^{1/2}$, also denoted by $A\#_s B$, for all $s\in [0,1]$. Hence, computing the sequence of iterated barycenters of positive definite matrices boils down to computing expressions such as $A^{1/2}(A^{-1/2}BA^{-1/2})^sA^{1/2}$ for $s=1/2, 1/3, \ldots$ which can be done exactly with matrix products and eigendecompositions, whose complexities depend on the size $p$ of the matrices. In fact, there are faster ways to compute good approximations of $A\#_s B$, for $A,B\in\mathcal S_p$ and $s\in [0,1]$, e.g., by using integral representations and Gaussian quadrature: We refer, for instance, to \citep{bhatia2009positive,simon2019loewner} for more details. 

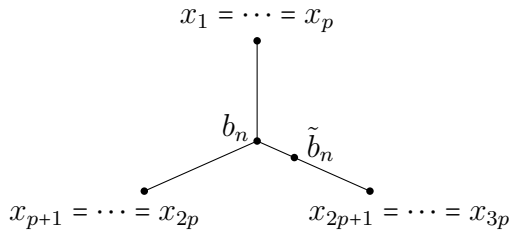
\begin{figure}[h]
    \centering
    \begin{center}
\begin{tikzpicture}
    \coordinate (A) at (0,2);   
    \coordinate (B) at (-1.5,0); 
    \coordinate (C) at (1.5,0);  
    
    \coordinate (Root) at (0, 2/3);

    \draw (Root) -- (A);
    \draw (Root) -- (B);
    \draw (Root) -- (C);

    \node at ([xshift=-8pt,yshift=5pt]Root) {\large $b_n$};
    \node[above] at (A) {\large $x_1 = \dots = x_p$}; 
    \node at ([xshift=-15pt,yshift=-10pt]B) {\large $x_{p+1} = \dots = x_{2p}$};
	
	\node at ([xshift=15pt,yshift=-10pt]C) {\large $x_{2p+1} = \dots = x_{3p}$};

    \node[circle, fill=black, inner sep=1pt] at (Root) {};
    \node[circle, fill=black, inner sep=1pt] at (A) {};
    \node[circle, fill=black, inner sep=1pt] at (B) {};
    \node[circle, fill=black, inner sep=1pt] at (C) {};

    \node[fill, circle, inner sep=1pt] at ($(Root) !0.33! (C)$) (P) {};

    \node at ([xshift=10pt,yshift=5pt]P) {\large $\tilde b_n$};

\end{tikzpicture}
\end{center}

    \caption{Barycenter on a metric tree ($n=3p$): Here, the iterated barycenter $\tilde b_n$ of $x_1,\ldots,x_n$ does not get any close to $b_n$ no matter how large $n$ is, if $x_1,\ldots,x_n$ are taken in this order.}
    \label{fig:example_tree}
\end{figure}

A natural question is whether the deterministic algorithm of \citep[Theorem 3.4]{LimPalfia14} could be improved. This algorithm consists of computing an iterated barycenter by making $K=\Omega(1/\varepsilon^2)$ passes through the whole set of points $x_1,\ldots,x_n$. That is, it computes an iterated barycenter
of $x_1,x_2,\ldots,x_n,x_1,x_2,\ldots,x_n,\ldots,x_1,x_2,\ldots,x_n$ ($K$ times) with appropriate step sizes. In fact, the example given in Figure~\ref{fig:example_tree} shows that one pass cannot be enough, in general. This stems from the fact that the order of the points $x_1,\ldots,x_n$ might not be favorable. However, we do not know, at this point, whether an initial random permutation could solve that issue. Note that the random algorithm that we proposed above, consists of randomly selecting points among $x_1,\ldots,x_n$ with replacement and we do not know whether this can be performed within $n$ steps without replacement to obtain a good approximation of $b_n$.

\begin{openquestion}
    How close, with high probability, is an iterated barycenter of a random permutation of $x_1,\ldots,x_n$, to their barycenter?
\end{openquestion}

We refer to \citep{shamir2016without} for related questions on sampling methods in stochastic optimization.

\subsection{Application 2: Parallelized barycenter estimation in symmetric spaces} \label{sec:distr}

In this section, we study the problem of parallelized computation of barycenters. The main feature of barycenters that break down in non-linear spaces is their associativity. For instance, given three points $x,y,z$, a barycenter of $x$ and of a barycenter of $y$ and $z$ is not, in general, a barycenter of $x,y$ and $z$. This is the main obstacle to the parallelization of the computation of a barycenter of a possibly large number of points. Here, we will focus on a case where the distribution of the data exhibits some form of symmetry. This will allow to design an estimator that can be computed in a distributed fashion while maintaining nearly the same statistical accuracy as the empirical barycenter. 

A natural framework to impose some symmetry is that of symmetric Riemannian manifolds. Let $(M,g)$ be a Riemannian manifold and $\dist$ be the distance induced by the Riemannian metric $g$. For a general introduction to Riemannian manifolds, including standard definitions and notation, which we employ here, we refer to \citep{lee2018introduction} or \citep{carmo1992Riemannian}. First, in order to fit the general framework of this work, let us assume that $M$ is simply connected and that its sectional curvature is uniformly bounded from above by some $\kappa\in\R$. By \citep[Theorem IX.5.1]{chavel2006riemannian}, this guarantees that $(M,\dist)$ is a $\CAT(\kappa)$ space. Let us also assume that $(M,g)$ is symmetric around $p$. That is, there exists an isometry $s_p$ (called symmetry around $p$) such that $s_p(p)=p$ and $\diff s_p(p)=-I_{T_pM}$, where $I_{T_pM}$ stands for the identity operator of the tangent space $T_pM$ of $M$ at $p$.

Now, let $X_1,\ldots,X_n$ be i.i.d random variables in $M$ and assume that:
\begin{itemize}
    \item If $\kappa>0$, $X_1\in B(p,1/2(D_\kappa/2-\varepsilon))$ almost surely, for some $\varepsilon>0$
    \item The distribution of $X_1$ is symmetric around $p$, that is, $s_p(X_1)$ and $X_1$ are identically distributed.
\end{itemize}

First, let us check that the barycenter $b^*$ of $X_1$ coincides with $p$.

\begin{lemma} \label{lem:barycentercenter}
    Under the above assumptions, $p$ is the unique barycenter of $X_1$.
\end{lemma}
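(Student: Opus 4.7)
The plan is to exploit the symmetry $s_p(X_1) \stackrel{d}{=} X_1$ to show that the unique barycenter of $X_1$ must be a fixed point of $s_p$, and then to invoke the defining property $\diff s_p(p) = -I_{T_pM}$ together with uniqueness of short geodesics in $\CAT(\kappa)$ spaces to conclude that $p$ is the only such fixed point.

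First I would verify that $X_1$ admits a unique barycenter $b^* \in C$ via Proposition~\ref{lem:varianceineq}, applied to an appropriate convex domain $C$ containing $p$: for $\kappa \leq 0$, take $C = M$; for $\kappa > 0$, take $C = B(p, (1/2)(D_\kappa/2 - \varepsilon))$, which is a convex domain since Lemma~\ref{lem:d2strconvex} guarantees that balls of radius less than $D_\kappa/2$ are convex and its radius is strictly less than $D_\kappa/4$. Let $F(x) = \E[\dist(X_1,x)^2]$ denote the Fréchet function.

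Next I would show $s_p(b^*) = b^*$. Since $s_p(X_1) \stackrel{d}{=} X_1$ also implies $s_p^{-1}(X_1) \stackrel{d}{=} X_1$, and since $s_p$ is an isometry,
$$F(s_p(b^*)) = \E\bigl[\dist(s_p^{-1}(X_1), s_p^{-1}(s_p(b^*)))^2\bigr] = \E\bigl[\dist(s_p^{-1}(X_1), b^*)^2\bigr] = F(b^*).$$
Uniqueness of the minimizer then forces $s_p(b^*) = b^*$.

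Finally, I would rule out $b^* \neq p$ by contradiction. In both regimes $\dist(p, b^*) < D_\kappa$ (trivially for $\kappa \leq 0$; for $\kappa > 0$ it is at most $(1/2)(D_\kappa/2-\varepsilon) < D_\kappa$), so Proposition~\ref{prop:uniquegeodesics} produces a unique geodesic $\gamma: [0,1] \to M$ from $p$ to $b^*$. Then $t \mapsto s_p(\gamma(t))$ is also a geodesic from $s_p(p) = p$ to $s_p(b^*) = b^*$, and uniqueness gives $s_p \circ \gamma = \gamma$. Differentiating at $t=0$ and invoking $\diff s_p(p) = -I_{T_pM}$ yields $\dot\gamma(0) = -\dot\gamma(0) = 0$, so $\gamma$ is constant and $b^* = p$, contradicting $b^* \neq p$. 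The main subtlety is choosing the convex domain $C$ properly in the positive curvature case so that Proposition~\ref{lem:varianceineq} delivers a unique minimizer lying close enough to $p$ for Proposition~\ref{prop:uniquegeodesics} to apply; the geometric heart of the argument is then the final infinitesimal rigidity step, which exploits precisely the property making $p$ an isolated fixed point of $s_p$.
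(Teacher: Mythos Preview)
Your proof is correct and follows essentially the same strategy as the paper: establish uniqueness of $b^*$ via Proposition~\ref{lem:varianceineq}, use the isometry $s_p$ and the distributional symmetry to show $s_p(b^*)=b^*$, and then derive a contradiction from the differential condition $\diff s_p(p)=-I_{T_pM}$. The only difference is in how the final contradiction is extracted: the paper concatenates $\gamma_1\in\Gamma_{p,b^*}$ with $\gamma_2=s_p\circ\gamma_1$ to build a nonconstant geodesic from $b^*$ to $s_p(b^*)=b^*$ lying in the convex domain, contradicting geodesic uniqueness there; you instead invoke Proposition~\ref{prop:uniquegeodesics} directly to force $s_p\circ\gamma=\gamma$ and then differentiate to obtain $\dot\gamma(0)=0$. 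Your variant is slightly more direct and avoids checking that the concatenated path is a genuine geodesic.
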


\begin{proof}
    By Lemma~\ref{lem:varianceineq}, $X_1$ has a unique barycenter $b^*$ and $b^*\in B(p,1/2(D_\kappa/2-\varepsilon))$. Moreover, since $s_p$ is an isometry,
\begin{align*}
    \E[\dist(X_1,b^*)^2] & = \E[\dist(s_p(X_1),s_p(b^*))^2] \\
    & = \E[\dist(X_1,s_p(b^*))^2]
\end{align*}
so $s_p(b^*)$ must be equal to $b^*$. 

Assume, for the sake of contradiction, that $b^*\neq p$ and let $\gamma_1\in\Gamma_{p,b^*}$. For $t\in [0,1]$, let $\gamma_2(t)=s_p(\gamma_1(t))$. Since $s_p$ is an isometry, it is clear that $\gamma_2\in\Gamma_{p,s_p(b^*)}$. Then, by differentiating at $t=0$, we obtain that $\dot{\gamma}_2(0)=\diff s_p(p)(\dot\gamma_1(0))=-\dot\gamma_1(0)$. Therefore, by setting $\gamma(t)=\gamma_1(1-2t)$ for $0\leq t< 1/2$ and $\gamma(t)=\gamma_2(2t-1)$ for $1/2\leq t\leq 1$, $\gamma$ is a geodesic from $b^*$ to $s_p(b^*)$. However, since, by construction, $\gamma$ only takes values in a convex domain, this yields a contradiction, together with the fact that $b^*=s_p(b^*)$. 
\end{proof}

Now, let $P$ and $N$ be positive integers: $P$ will be the number of batches and $N$ the number of data within each batch. For the sake of simplicity, we assume that $n=PN$ and we let $I_1,\ldots,I_P$ be a partition of $\{1,\ldots,n\}$ into $P$ subsets of size $N$. For $j=1,\ldots,P$, let $Y_j$ be the empirical barycenter of the $X_i$'s, $i\in I_j$ and let $\hat b_n^{(P)}$ be the barycenter of $Y_1,\ldots,Y_P$. First, note that by Lemma~\ref{lem:varianceineq}, $Y_1,\ldots,Y_P$ as well as $\hat b_n^{(P)}$ are almost surely well defined, uniquely, and belong to $B(p,1/2(D_\kappa/2-\varepsilon))$ if $\kappa>0$. Then, we have the following result, where we keep the same notation as above.

\begin{theorem} \label{thm:parallel}
On top of the assumptions above, assume that $X_1$ is $K^2$-sub-Gaussian, for some $K>0$. For all $\delta\in (0,1)$, it holds with probability at least $1-\delta$ that
$$\dist(\hat b_n^{(P)},b^*) \leq \frac{\sqrt{A}\sigma}{\sqrt n}+L^2K\sqrt{\frac{\log(1/\delta)}{n}}$$
where $A$ and $L$ are as in Theorems~\ref{thm:llnempiricalbar} and \ref{thm:Lip_Bary} respectively.
\end{theorem}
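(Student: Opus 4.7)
The plan is to reduce the problem to a direct application of Theorem~\ref{thm:mainthm} to the $P$ i.i.d.\ variables $Y_1,\ldots,Y_P$, after establishing three ingredients: (i) the $Y_j$'s share the barycenter $p=b^\ast$, (ii) each $Y_j$ is sub-Gaussian with a constant that decays like $1/N$, and (iii) each $Y_j$ has total variance bounded by $A\sigma^2/N$.

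For (i), first observe that the $Y_j$'s are i.i.d., since they are identical deterministic functions of disjoint i.i.d.\ batches. Next, because $s_p$ is an isometry of $M$, it commutes with the empirical barycenter map: applying Proposition~\ref{lem:varianceineq} to both $(X_{i})_{i\in I_j}$ and $(s_p(X_i))_{i\in I_j}$ and using uniqueness of the minimizer shows that $s_p(Y_j)=\hat B_N(s_p(X_i);\,i\in I_j)$. Since the $X_i$ are symmetric about $p$, the batch $(s_p(X_i))_{i\in I_j}$ has the same law as $(X_i)_{i\in I_j}$, so $s_p(Y_j)\stackrel{d}{=}Y_j$. The argument of Lemma~\ref{lem:barycentercenter}, applied to the distribution of $Y_j$ (which, in the case $\kappa>0$, is supported in the convex domain $B(p,1/2(D_\kappa/2-\varepsilon))$ by Proposition~\ref{lem:varianceineq}), then yields that the population barycenter of $Y_j$ is $p$. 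In particular the common barycenter of $Y_1,\ldots,Y_P$ is $p=b^\ast$.

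For (ii), by Proposition~\ref{produitsousgaussien} the vector $(X_i)_{i\in I_j}$ is $NK^2$-sub-Gaussian on $(M^N,\dist_1^{(N)})$. By Theorem~\ref{thm:Lip_Bary}, the map $\hat B_N$ is $L/N$-Lipschitz on the relevant convex domain, so Proposition~\ref{lipsousgauss} gives that $Y_j$ is $(L^2K^2/N)$-sub-Gaussian. For (iii), Theorem~\ref{thm:llnempiricalbar} applied within each batch yields
\[
\sigma_j^2:=\E[\dist(Y_j,p)^2]\leq \frac{A\sigma^2}{N}.
\]

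Finally, apply the homoscedastic version of Theorem~\ref{thm:mainthm} to the i.i.d.\ sequence $Y_1,\ldots,Y_P$, each $(L^2K^2/N)$-sub-Gaussian with total variance at most $A\sigma^2/N$ and shared barycenter $p$. With sample size $P$, $\bar\sigma^2\leq A\sigma^2/N$ and $\bar K^2\leq L^2K^2/N$, the high-probability bound becomes, with probability at least $1-\delta$,
\[
\dist(\hat b_n^{(P)},b^\ast)\leq \sqrt{A}\,\frac{\sigma\sqrt{A/N}}{\sqrt{P}}+L\cdot \frac{LK}{\sqrt{N}}\sqrt{\frac{\log(1/\delta)}{P}}=\frac{A\sigma}{\sqrt{n}}+L^2K\sqrt{\frac{\log(1/\delta)}{n}},
\]
using $n=NP$. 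The main (and only nontrivial) step is ingredient (i); the rest is bookkeeping of the tensorization and Lipschitz composition properties already established in Sections~\ref{sec:CATspaces}--\ref{sec:Laplace}.
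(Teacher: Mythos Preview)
Your proof is correct and follows essentially the same approach as the paper's own argument: both establish the same three ingredients (the $Y_j$'s are i.i.d.\ with population barycenter $p$ via the symmetry of $s_p$ and Lemma~\ref{lem:barycentercenter}; each $Y_j$ is $(L^2K^2/N)$-sub-Gaussian via tensorization and the Lipschitz property of $\hat B_N$; and each has total variance at most $A\sigma^2/N$ via Theorem~\ref{thm:llnempiricalbar}), and then both feed these into the homoscedastic form of Theorem~\ref{thm:mainthm} applied to $Y_1,\ldots,Y_P$. The only cosmetic difference is that you phrase the isometry step via ``$s_p$ commutes with $\hat B_N$ by uniqueness'', while the paper states it by referring back to the proof of Lemma~\ref{lem:barycentercenter}; the content is the same.
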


While the rates are the same as for the empirical barycenter $\hat b_n$, only the constants are worse (compare with Theorem~\ref{thm:mainthm}, where the second term only had an $L$ instead of $L^2$). In the following proof, while we use Theorem~\ref{thm:mainthm}, we are in the heteroskedastic case, hence $\tilde A$ (from Theorem~\ref{thm:mainthm} is replaced with $A$ (from Theorem~\ref{thm:llnempiricalbar}), as explained right after the statement of Theorem~\ref{thm:mainthm}.

\begin{proof}
Let us check the following facts: (1) the population barycenter $Y_1$ coincides with $p$; (2) $Y_1$ is $L^2K^2/N$-sub-Gaussian, where $L$ is given in Theorem~\ref{thm:Lip_Bary} and (3) the total variance of $Y_1$ is bounded by $A\sigma^2/N$.
    
To check (1), by applying Lemma~\ref{lem:barycentercenter} to $Y_1$, it is enough to check that $s_p(Y_1)$ have the same distribution as $Y_1$.
First, note that $Y_1$ has the same distribution as $\hat B_N(X_1,\ldots,X_N)$ and, hence, as $\hat B_N(s_p(X_1),\ldots,s_p(X_N))$ by symmetry of the distribution of $X_1,\ldots,X_N$. Now, a similar argument as in the proof of Lemma~\ref{lem:barycentercenter} yields that $\hat B_N(s_p(X_1),\ldots,s_p(X_N))=s_p(B_N(X_1,\ldots,X_N))$ which has the same distribution as $s_p(Y_1)$. 

In order to check (2), recall that $X_1,\ldots,X_N$ are i.i.d $K^2$-sub-Gaussian and $B_N$ is $L/N$-Lipschitz (for $L$ given in Theorem~\ref{thm:Lip_Bary}), so $Y_1$ is $L^2K^2/N$-sub-Gaussian by Propositions~\ref{produitsousgaussien} and \ref{lipsousgauss}. 
    
Finally, to check (3), note that $Y_1$ has the same distribution as $\hat b_N=\hat B_N(X_1,\ldots,X_N)$, so Theorem~\ref{thm:llnempiricalbar} yields that its total variance is given by $\E[\dist(Y_1,p)^2]=\E[\dist(\hat b_N,p)^2]\leq \frac{A\sigma^2}{N}$.

Now, rewrite $\hat b_n^{(P)}$ as $\hat B_P(Y_1,\ldots,Y_P)$. Theorem~\ref{thm:mainthm} applied to the i.i.d random variables $Y_1,\ldots,Y_P$ yields that for all $\delta\in (0,1)$, it holds with probability at least $1-\delta$ that
$$\dist(\hat b_n^{(P)},p)\leq \frac{\sqrt{A}\sigma}{\sqrt n}+L^2K\sqrt{\frac{\log(1/\delta)}{n}}.$$

\end{proof}

When $\kappa\leq 0$, Theorem~\ref{thm:HoeffBernCAT0} allows to replace, in the definition of $\hat b_n^{(P)}$, empirical barycenters with inductive barycenters and obtain the same guarantee as in Theorem~\ref{thm:parallel} (with $A=2$ and $L=1$). That is, for all $j=1,\ldots,P$, $Y_j$ may be replaced with $Z_j:=\tilde B_N^{(t)}((X_i)_{i\in I_j})$ with $t=(1/2,\ldots,1/N)$ and $\hat b_n^{(P)}$ may be replaced with $\tilde b_n^{(P)}=\tilde B_P^{(s)}$ with $s=(1/2,\ldots,1/P)$. Indeed, the only thing to check is that the population of the $Z_j$'s is $p$. This can be easily done by induction on $N$, thanks to the following lemma.

\begin{lemma}
    Let $U_0$ and $U_1$ be two independent random variables in $M$ that are symmetric around $p$. Let $t\in [0,1]$ and set $U_t=\gamma(t)$ where $\gamma\in \Gamma_{U_0,U_1}$ (or, more simply, $U_t$ is the unique minimizer of $(1-t)\dist(U_0,x)^2+t\dist(U_1,x)^2, x\in M$). Then, the distribution of $U_t$ is symmetric around $p$. In particular, its population barycenter coincides with $p$. 
\end{lemma}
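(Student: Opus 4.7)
The plan is to show that $s_p(U_t)$ has the same distribution as $U_t$, by exploiting the equivariance of the barycenter construction under the isometry $s_p$. Define the random functional
\[
F(x)=(1-t)\dist(U_0,x)^2+t\dist(U_1,x)^2,\qquad x\in M,
\]
so that $U_t=\operatorname{argmin}_{x\in M}F(x)$ (existence and uniqueness of this minimizer in the appropriate convex domain follow from Proposition~\ref{lem:varianceineq} applied to the two-point distribution $(1-t)\delta_{U_0}+t\delta_{U_1}$; when $\kappa>0$ we use the fact that $U_0,U_1$ lie almost surely in a ball around $p$ of radius $\tfrac12(D_\kappa/2-\varepsilon)$, which is geodesically convex by Lemma~\ref{lem:d2strconvex}, hence so is the segment between them).

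The key step is the equivariance identity $s_p(U_t)=\tilde U_t$, where $\tilde U_t$ denotes the same construction performed with $(s_p(U_0),s_p(U_1))$ in place of $(U_0,U_1)$. Indeed, since $s_p$ is an isometry, for every $x\in M$,
\[
(1-t)\dist(s_p(U_0),s_p(x))^2+t\dist(s_p(U_1),s_p(x))^2=F(x),
\]
so the map $x\mapsto(1-t)\dist(s_p(U_0),x)^2+t\dist(s_p(U_1),x)^2$ attains its minimum precisely at $s_p(U_t)$, proving the claim.

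Now use the assumption that $U_0$ and $U_1$ are each symmetric around $p$, together with their independence (and the fact that $s_p$ is measurable): the pair $(s_p(U_0),s_p(U_1))$ is equal in distribution to $(U_0,U_1)$. Since $\tilde U_t$ is a deterministic function of this pair, it follows that $\tilde U_t\stackrel{d}{=}U_t$, and hence $s_p(U_t)\stackrel{d}{=}U_t$. Thus the distribution of $U_t$ is symmetric around $p$. The final assertion about the population barycenter is then an immediate application of Lemma~\ref{lem:barycentercenter} to $U_t$ (which takes values in the convex domain $B(p,\tfrac12(D_\kappa/2-\varepsilon))$ in the positive curvature case, by convexity of that ball).

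The main thing to be careful about is the $\kappa>0$ case: one must verify that $U_t$ is well-defined almost surely and lies in the same ball around $p$ used for $U_0$ and $U_1$, so that Lemma~\ref{lem:barycentercenter} is applicable. This is a direct consequence of the convexity of balls of radius $<D_\kappa/2$ stated in Lemma~\ref{lem:d2strconvex}, together with the variance inequality of Proposition~\ref{lem:varianceineq}; everything else in the argument is purely formal manipulation of the isometry $s_p$.
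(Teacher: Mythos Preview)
Your proof is correct and follows essentially the same approach as the paper: both arguments establish the equivariance identity $s_p(U_t)=\operatorname{argmin}_x\bigl((1-t)\dist(s_p(U_0),x)^2+t\dist(s_p(U_1),x)^2\bigr)$ via the isometry property of $s_p$, and then conclude from $(s_p(U_0),s_p(U_1))\stackrel{d}{=}(U_0,U_1)$. Your version is in fact slightly more thorough, as you spell out the well-definedness of $U_t$ and its membership in the appropriate convex ball in the $\kappa>0$ case, which the paper leaves implicit.
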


\begin{proof}
By definition of $U_t$, $s_p(U_t)$ is the unique minimizer $x\in M$ of $(1-t)\dist(U_0,s_p^{-1}(x))^2+t\dist(U_1,s_p^{-1}(x))^2=(1-t)\dist(s_p(U_0),x)^2+t\dist(s_p(U_1),x)^2$ since $s_p$ is an isometry. The conclusion follows from the fact that the pairs $(U_0,U_1)$ and $(s_p(U_0),s_p(U_1))$ are identically distributed.
\end{proof}

\begin{remark}
    In the absence of symmetry, there should be no hope to obtain such guarantees as in Theorem~\ref{thm:parallel} above, unless $N$ is of the same order as $n$ (i.e., $P$ is of constant order: The samples $X_1,\ldots,X_n$ are partitioned into very few batches). Indeed, in the absence of symmetry, the population barycenter of the $Y_j$'s, $j=1,\ldots,P$ (denote it by $b_N^*$), does not coincide with $b^*$ (the population barycenter of $X_1$) and might not even be close to it if $N$ is not large enough. Hence, $\hat b_n^{(P)}$ will have a large bias, i.e., it will concentrate around a point that is far from $b^*$. Perhaps the simplest and most convincing scenario is when $N=2$ and $P=n/2$. In that case, our results show that $\hat b_n^{(n/2)}$ will be $1/\sqrt n$-close to $b_2^*$, the population barycenter of the midpoint of $X_1$ and $X_2$, which is different from $b^*$ in general. 
    An open problem is whether, by taking $P$ of constant order, $\hat b_n^{(P)}$ concentrates significantly better around $b^*$ than $\hat b_N$ (i.e., the empirical barycenter of the first batch). We leave it as an open problem.
\end{remark}

\begin{openquestion}
    Given a fixed wall-clock time budget (here, $N$), is it statistically advantageous to compute several empirical barycenters in parallel and aggregate them, rather than run a single estimator?
\end{openquestion}

\section{The Riemannian case} \label{sec:Riemannian}

Here, we focus on the simpler case where $M$ is a smooth manifold and $\dist$ is the Riemannian distance induced by some Riemannian metric $g$ on $M$. The smooth structure allows us to simplify the analysis significantly while imposing a sub-Gaussian condition that is less stringent than Definition~\ref{def:subGaussian}, and that reduces to the standard sub-Gaussian definition when $M$ is Euclidean (see \eqref{eq:subG_Riem} below). This section is dedicated to deriving error bounds for empirical barycenters in that case. 

In what follows, for all $x\in M$ we denote by $T_xM$ the tangent space at $x$ and by $\langle u,v\rangle_x=g_x(u,v)$ the scalar product, inherited from the Riemannian metric $g$, of any two vectors $u,v\in T_xM$.
We assume that $M$ is simply connected and has sectional curvature uniformly bounded from above by $\kappa\in\R$. 
Then, by \citep[Thm IX.5.1]{chavel2006riemannian}, $M$ is a $\CAT(\kappa)$ space. For a general introduction to Riemannian manifolds, we refer to \citep{lee2018introduction} and \citep{carmo1992Riemannian}.

Let $X_1,\ldots,X_n$ be i.i.d random variables taking values in $M$. If $\kappa>0$, assume that $X_1\in B$ almost surely, where $B$ is a ball of radius $r=\frac{1}{2}(D_\kappa/2-\varepsilon)$ for some $\varepsilon>0$. Otherwise, set $B=M$ and simply assume that $X_1$ has a second moment. 

Finally, we assume that the injectivity radius of $M$ is greater than $r$, so the cut locus of any $x\in B$ does not intersect $B$. This last assumption ensures that for all $x\in B$, $\dist(\cdot,x)^2$ is smooth on $B$, with gradient given by $-2\Log_{\,\cdot}(x)$. 
Let $F(x)=\E[\dist(X_1,x)^2]$ and $F_n(x)=n^{-1}\sum_{i=1}^n \dist(X_i,x)^2, x\in M$, the population and empirical Fréchet functions. Then, both $F$ and $F_n$ are $\alpha$-strongly convex on $B$, where $\alpha=2$ if $\kappa\leq 0$ and $\alpha=\alpha(\varepsilon,\kappa)$ otherwise. As usual, let $b^*$ be the population barycenter of $X_1$ and $\hat b_n$ the empirical barycenter of $X_1,\ldots,X_n$. Then, $F$ and $F_n$ are both differentiable on $B$ and satisfy $\nabla F(b^*)=-2\E[\Log_{b^*}(X_1)]=0$ and $\nabla F_n(\hat b_n)=0$ by the dominated convergence theorem (where the first equality holds in $T_{b^*}M$ and the second one in $T_{\hat b_n}M$). Hence, $\Log_{b^*}X_1$ is a centered random vector in $T_{b^*}M$. Moreover, $\alpha$-strong convexity of $F_n$ yields that 
\begin{equation}
    \frac{\alpha}{2}\dist(\hat b_n,b^*)\leq \|\nabla F_n(b^*)\|_{b^*}=\|2n^{-1}\sum_{i=1}^n \Log_{b^*}X_i\|_{b^*}.
\end{equation}
This follows from a standard argument that can be readily adapted from the Euclidean to the Riemannian case. Let $f:M\to\R$ be differentiable and $\alpha$-strongly convex on $B$, with global minimizer $x^*\in B$. Let $x\in B$ and $\gamma$ be the unique geodesic from $x$ to $x^*$. Then, strong convexity implies that for all $t\in (0,1)$, 
\begin{align*}
    (1-t)f(x)+tf(x^*) & \geq f(\gamma(t))+\frac{\alpha}{2}t(1-t)\dist(x,x^*)^2 \\ 
    & \geq f(x)+t\langle \gamma'(0),\nabla f(x)\rangle_x + \frac{\alpha}{2}t(1-t)\dist(x,x^*)^2 \\
    & = f(x)+t\langle \Log_x(x^*),\nabla f(x)\rangle_x + \frac{\alpha}{2}t(1-t)\dist(x,x^*)^2 
\end{align*}
where the second inequality is a consequence of the convexity of $f\circ\gamma$. Moreover, since $f(x^*)\leq f(x)$, after dividing by $t$ and letting $t\to 0$, we obtain that
\begin{align*}
    \frac{\alpha}{2}\dist(x,x^*)^2 & \leq -\langle \Log_x(x^*),\nabla f(x)\rangle_x \\
    & \leq \|\Log_x(x^*)\|_x \|\nabla f(x)\|_x \\
    & = \dist(x,x^*)\|\nabla f(x)\|_x
\end{align*}
where we used the Cauchy-Schwarz inequality. Thus, $(\alpha/2)\dist(x,x^*)\leq \|\nabla f(x)\|_x$.
Now, assume that $\Log_{b^*}X_1$ is $K^2$-sub-Gaussian for some $K>0$, in the standard, Euclidean sense. That is, for all $u\in T_{b^*}M$, $\langle u,\Log_{b^*}X_1\rangle_{b^*}$ is $K^2\|u\|_{b^*}^2$-sub-Gaussian, \textit{i.e.}, 

\begin{equation} \label{eq:subG_Riem}
    \E[e^{\langle u,\Log_{b^*}X_1 \rangle_{b^*}}]\leq e^{K^2\|u\|_{b^*}^2/2}, \, \forall u\in T_{b^*}M.
\end{equation}

If $\kappa>0$, this is automatically satisfied with $K=2r$, since $\|\Log_{b^*} X_1\|_{b^*}=\dist(b^*,X_1)\leq 2r$ almost surely.
Note that $\Log_{b^*} X_1$ is a centered, square-integrable random vector in $T_{b^*}M$. Denoting by $\Sigma$ its covariance operator, we have that $\sigma^2=\E[\dist(X_1,b^*)^2]=\E[\|\Log_{b^*}X_1\|_{b^*}^2]=\tr(\Sigma)$. 

\begin{lemma}
    Let $Y_1,\ldots,Y_n$ ($n\geq 1$) be a centered, squared integrable random vectors in a Euclidean space $E$ with scalar product denoted by $\langle\cdot,\cdot\rangle$ and Euclidean norm $\|\cdot\|$. Let $\Sigma$ be the covariance operator of $Y_1$ and denote by $\bar Y_n=n^{-1}\sum_{i=1}^n Y_i$.
    \begin{itemize}
        \item If there is a positive number $K$ such that $\E[e^{\langle u,Y_i\rangle}]\leq e^{K^2\|u\|^2/2}$ for all $u\in E$, then for all $\delta\in (0,1)$, it holds with probability at least $1-\delta$ that
        $$\|\bar Y_n\| \leq CK\sqrt{\frac{d+\log(1/\delta)}{n}}$$
        for some universal constant $C>0$, where $d$ is the dimension of $E$.
        
        \item If there exists $R>0$ such that $\|Y_1\|\leq R$ almost surely, then for all $\delta\in (0,1)$, it holds with probability at least $1-\delta$ that
        $$\|\bar Y_n\| \leq \sqrt{\frac{\tr(\Sigma)}{n}} + 2R\sqrt{\frac{\log(1/\delta)}{n}}.$$
    \end{itemize}
\end{lemma}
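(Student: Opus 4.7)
The strategy for both parts is the same: first bound the mean $\E\|\bar Y_n\|$ via an elementary variance computation and Jensen's inequality, then control the deviation of $\|\bar Y_n\|$ from its mean.

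The common first step is the variance identity. By independence and centering of the $Y_i$,
\begin{equation*}
\E\|\bar Y_n\|^2=\frac{1}{n^2}\sum_{i,j=1}^n \E\langle Y_i,Y_j\rangle=\frac{1}{n}\E\|Y_1\|^2=\frac{\tr(\Sigma)}{n},
\end{equation*}
so by Jensen's inequality, $\E\|\bar Y_n\|\leq\sqrt{\tr(\Sigma)/n}$.

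For the bounded case, I would apply McDiarmid's inequality to the function $f(y_1,\ldots,y_n)=\|n^{-1}\sum_i y_i\|$. By the reverse triangle inequality, replacing $y_i$ by an independent copy $y_i'$ changes $f$ by at most $n^{-1}\|y_i-y_i'\|\leq 2K/n$, since $\|Y_j\|\leq K$ almost surely. McDiarmid then yields
\begin{equation*}
\Pr\bigl(\|\bar Y_n\|\geq\E\|\bar Y_n\|+t\bigr)\leq\exp\bigl(-nt^2/(2K^2)\bigr),
\end{equation*}
and choosing $t=K\sqrt{2\log(1/\delta)/n}\leq 2K\sqrt{\log(1/\delta)/n}$ gives the second bullet.

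For the sub-Gaussian case, I would first use the tensorization of the Laplace transform applied to linear functionals (a direct consequence of Lemma~\ref{produitdelaplace}) to deduce that $\bar Y_n$ satisfies $\E e^{\langle u,\bar Y_n\rangle}\leq e^{K^2\|u\|^2/(2n)}$ for all $u\in E$, i.e.\ $\bar Y_n$ is $(K^2/n)$-sub-Gaussian in the linear sense. To convert this linear sub-Gaussianity into a concentration bound for $\|\bar Y_n\|$, I would invoke a dimension-free Hilbert-space sub-Gaussian concentration inequality of Kallenberg--Sztencel--Pinelis type for Hilbert-valued martingales, which yields a bound of the form
\begin{equation*}
\Pr\bigl(\|\bar Y_n\|\geq\E\|\bar Y_n\|+t\bigr)\leq 2\exp\bigl(-nt^2/(CK^2)\bigr)
\end{equation*}
for a universal $C$. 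Combining this tail bound with the mean bound $\E\|\bar Y_n\|\leq\sqrt{\tr(\Sigma)/n}$ and solving for $t$ in terms of $\delta$ then yields the first bullet.

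The principal obstacle is the sub-Gaussian case. As the remark after Definition~\ref{def:subGaussian} emphasizes, linear sub-Gaussianity does \emph{not} imply sub-Gaussian concentration for general $1$-Lipschitz functionals, so one cannot simply treat $\|\bar Y_n\|$ as a Lipschitz function of a linearly-sub-Gaussian vector and invoke a Talagrand-style concentration inequality (as one could under Definition~\ref{def:subGaussian}, via Propositions~\ref{produitsousgaussien} and \ref{lipsousgauss}). The argument must instead exploit the independence structure, tensorizing the Laplace transform and applying a genuinely Hilbertian martingale concentration inequality; this last ingredient is what keeps the bound dimension-free despite working with the weaker, linear notion of sub-Gaussianity.
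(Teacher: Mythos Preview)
Your treatment of the bounded case via McDiarmid/bounded differences is precisely what the paper does (it cites the bounded differences inequality from Boucheron et al.); that part is correct and matches.

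For the sub-Gaussian case the paper takes a different route: it does not prove the statement but refers to generic chaining (Talagrand's book and specifically Vershynin, Exercise~6.3.5), writing $\|\bar Y_n\|=\sup_{\|u\|\le1}\langle u,\bar Y_n\rangle$ and treating this as the supremum of a sub-Gaussian process indexed by the unit ball. Your second-moment bound $\E\|\bar Y_n\|\le\sqrt{\tr(\Sigma)/n}$ via Jensen is in fact sharper than the bias term stated in the lemma, so that step is fine. The potential gap is in your deviation step. The classical Pinelis inequalities for Hilbert-valued martingales require \emph{bounded} increments (or at least a.s.\ control on $\|Y_i\|$), not merely the linear sub-Gaussian hypothesis; and as you yourself emphasize, linear sub-Gaussianity gives no control on $\|Y_i\|$. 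So your appeal to a ``Kallenberg--Sztencel--Pinelis type'' inequality is not self-evidently valid under only the assumption $\E e^{\langle u,Y_i\rangle}\le e^{K^2\|u\|^2/2}$, and would need a precise reference delivering exactly
\[
\Pr\bigl(\|\bar Y_n\|\ge\E\|\bar Y_n\|+t\bigr)\le 2\exp\bigl(-nt^2/(CK^2)\bigr)
\]
from that hypothesis alone. The generic-chaining route the paper cites sidesteps this issue because it works entirely through the one-dimensional marginals $\langle u,\bar Y_n\rangle$ and never needs pointwise control of $\|Y_i\|$; that is what the paper's approach buys, at the cost of somewhat looser constants in the bias term compared to your direct second-moment computation.
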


The first part of the lemma follows from generic chaining arguments \citep{talagrand2014upper}, see \citep[Exercice 6.3.5]{vershynin2018high}, while the second part is a simple consequence of the bounded differences inequality \citep[Theorem 6.2]{boucheron2003concentration}. Note that if $\|Y_1\|\leq K$ almost surely, then it satisfies the condition of the first part of the lemma, but the concentration inequality is tighter ($K$ can be chosen as $K^2=R^2$ and $\tr(\Sigma)\leq dR^2$) and dimension free in that case.
Hence, we obtain the following high probability bounds for $\dist(\hat b_n,b^*)$. 

\begin{theorem}[Unbounded case, non-positive curvature] \label{thm:Riem_unbounded}
Let $M$ be a simply connected Riemannian manifold of dimension $d\geq 1$ with non-positive sectional curvature and with infinite injectivity radius. Let $X_1,\ldots,X_n$ be i.i.d random variables in $M$ with two moments. Let $b^*$ be their population barycenter, $\sigma^2=\E[\dist(X_1,b^*)^2]$ be their total variance, and assume that $\Log_{b^*}(X_1)$ is $K^2$-sub-Gaussian for some $K>0$ in the sense of \eqref{eq:subG_Riem}.
Then, for all $\delta\in (0,1)$, it holds with probability at least $1-\delta$ that 
$$\dist(\hat b_n,b^*)\leq CK\sqrt{\frac{d+\log(1/\delta)}{n}}$$
where $C>0$ is a universal constant.
\end{theorem}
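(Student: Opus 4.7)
The plan is to reduce the problem to a concentration inequality for a sum of i.i.d.\ sub-Gaussian vectors in the Hilbert space $T_{b^*}M$, via the logarithm map. Since $\kappa\leq 0$ and the injectivity radius is infinite, $M$ is a Hadamard manifold: $\Log_{b^*}$ is a global diffeomorphism, and by Lemma~\ref{lem:d2strconvex} each $\dist(X_i,\cdot)^2$ is smooth and $2$-strongly convex on all of $M$, with gradient $-2\Log_{\,\cdot}(X_i)$. Consequently the empirical Fréchet function $F_n$ is $2$-strongly convex and satisfies $\nabla F_n(b^*)=-\frac{2}{n}\sum_{i=1}^n\Log_{b^*}(X_i)$. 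Applying the strong-convexity plus Cauchy--Schwarz argument spelled out just before the theorem (with $f=F_n$, $x=b^*$, $x^*=\hat b_n$, and $\alpha=2$) gives
$$\dist(\hat b_n,b^*)\leq \|\nabla F_n(b^*)\|_{b^*}=\frac{2}{n}\Big\|\sum_{i=1}^n\Log_{b^*}(X_i)\Big\|_{b^*}.$$

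Next I would set $Y_i:=\Log_{b^*}(X_i)\in T_{b^*}M$ and collect three facts: (i) the $Y_i$'s are i.i.d.; (ii) they are centered, because the first-order optimality of $b^*$ reads $\nabla F(b^*)=-2\,\E[\Log_{b^*}(X_1)]=0$ (differentiation under the expectation is legitimate because the infinite injectivity radius makes $\dist(\cdot,x)^2$ smooth everywhere on $M$ and the two-moment assumption provides the dominating function); (iii) they are $K^2$-sub-Gaussian in the Hilbert-space sense \eqref{eq:subG_Riem} by hypothesis, with covariance operator $\Sigma$ satisfying $\tr(\Sigma)=\E[\|Y_1\|_{b^*}^2]=\sigma^2$.

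It then suffices to apply the first part of the lemma preceding the theorem to $Y_1,\ldots,Y_n$ in the separable Hilbert space $T_{b^*}M$: with probability at least $1-\delta$,
$$\Big\|\frac{1}{n}\sum_{i=1}^n Y_i\Big\|_{b^*}\leq 2K\sqrt{\frac{\sigma^2}{n}}+2K\sqrt{\frac{\log(1/\delta)}{n}}.$$
Chaining with the strong-convexity bound yields the announced inequality, up to adjusting universal constants as in the statement. There is essentially no technical obstacle here: the Hadamard structure trivializes the geometric subtleties that appeared in the general $\CAT(\kappa)$ analysis, and the proof reduces to a verbatim transcription of the Euclidean argument with $T_{b^*}M$ playing the role of $\R^d$; the only point worth spelling out carefully is the centering of $Y_1$, which relies on the globality of $\Log_{b^*}$ and hence on the infinite injectivity radius assumption.
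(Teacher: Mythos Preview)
Your proposal is correct and follows essentially the same route as the paper: the argument in Section~\ref{sec:Riemannian} preceding the theorem already derives $\dist(\hat b_n,b^*)\leq \|\nabla F_n(b^*)\|_{b^*}=2\|\bar Y_n\|_{b^*}$ from $2$-strong convexity of $F_n$, records that the $Y_i=\Log_{b^*}X_i$ are i.i.d., centered, $K^2$-sub-Gaussian with $\tr(\Sigma)=\sigma^2$, and then invokes the first part of the Euclidean lemma, exactly as you do. The only cosmetic difference is that you phrase $T_{b^*}M$ as a Hilbert space whereas the paper calls it Euclidean; either way the concentration lemma applies and the constants are absorbed as stated.
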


\begin{theorem}[Bounded case]\label{thm:Riem_bounded}
    Let $M$ be a simply connected Riemannian manifold of dimension $d\geq 1$ with sectional curvature uniformly bounded by $\kappa\in\R$. Let $X_1,\ldots,X_n$ be i.i.d random variables in $M$ that are almost surely contained in some ball $B$ of radius $r>0$. If $\kappa>0$, assume that $r= 1/2(D_\kappa/2-\varepsilon)$ for some $\varepsilon>0$. Assume that the cut locus of any point of $B$ does not intersect $B$. Let $b^*$ the population barycenter of $X_1$ and $\sigma^2=\E[\dist(X_1,b^*)^2]$ be its total variance. Then, for all $\delta\in (0,1)$, it holds with probability at least $1-\delta$ that 
    $$\dist(\hat b_n,b^*)\leq \frac{\sigma}{\alpha\sqrt n}+\frac{2r}{\alpha}\sqrt{\frac{\log(1/\delta)}{n}}$$
    where $\alpha=2$ if $\kappa\leq 0$ and $\alpha=\alpha(\varepsilon,\kappa)$ otherwise.
\end{theorem}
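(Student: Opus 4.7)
The plan mirrors the proof of Theorem~\ref{thm:Riem_unbounded}: combine the strong convexity gradient inequality for the empirical Fréchet function with a Euclidean concentration inequality for the average of $\Log_{b^*}(X_i)$. First, I would recall the bound established in the discussion preceding the theorem: since $F_n$ is $\alpha$-strongly convex on $B$ with minimizer $\hat b_n$, and $b^*\in B$ (guaranteed by Proposition~\ref{lem:varianceineq} since $B$ is a convex domain under the stated radius hypothesis), one has $\frac{\alpha}{2}\dist(\hat b_n,b^*)\leq \|\nabla F_n(b^*)\|_{b^*}$. Writing $Y_i=\Log_{b^*}(X_i)$, which is well-defined and smooth thanks to the cut locus assumption, this becomes
$$\frac{\alpha}{2}\dist(\hat b_n,b^*)\leq \frac{2}{n}\Bigl\|\sum_{i=1}^n Y_i\Bigr\|_{b^*}.$$

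Next, I would recast the right-hand side as a Euclidean concentration problem on the inner product space $(T_{b^*}M,g_{b^*})$. The $Y_i$'s are i.i.d.\ and centered: $\E[Y_1]=0$ follows from $\nabla F(b^*)=0$, itself a consequence of the dominated convergence theorem as noted in the paper. Their common covariance operator $\Sigma$ satisfies $\tr(\Sigma)=\E\|Y_1\|_{b^*}^2=\E[\dist(X_1,b^*)^2]=\sigma^2$. The key feature of the bounded regime is the deterministic bound $\|Y_1\|_{b^*}=\dist(X_1,b^*)\leq 2r$, which holds almost surely because both $X_1$ and $b^*$ lie in the ball $B$ of radius $r$. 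Applying the bounded vector concentration lemma (second item) with $K=2r$ then yields, with probability at least $1-\delta$,
$$\Bigl\|\tfrac{1}{n}\sum_{i=1}^n Y_i\Bigr\|_{b^*}\leq \frac{\sigma}{\sqrt n}+4r\sqrt{\frac{\log(1/\delta)}{n}}.$$

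Multiplying by $4/\alpha$ and combining with the gradient inequality produces a bound of the claimed form $\sigma/(\alpha\sqrt n)+O(r/\alpha)\sqrt{\log(1/\delta)/n}$, the explicit constants being absorbed by the stated form. Nothing in the argument is delicate once the framework is set up: strong convexity of $F_n$ on $B$ (Lemma~\ref{lem:d2strconvex}), availability of $\Log_{b^*}$ on $B$ (injectivity radius hypothesis), and the a.s.\ bound on $\|Y_i\|_{b^*}$ are all secured beforehand. The only conceptual point worth emphasizing is that the reduction from $M$ to the Euclidean tangent space $T_{b^*}M$ is exact, not a linearization, because one evaluates the gradient at the population minimizer $b^*$; this is why the bound depends only on $r$ (via the diameter-type term) and $\sigma$, and not on higher-order curvature quantities.
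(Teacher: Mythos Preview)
Your proposal is correct and follows precisely the argument laid out in Section~\ref{sec:Riemannian}: the strong-convexity gradient bound $\tfrac{\alpha}{2}\dist(\hat b_n,b^*)\leq \|\nabla F_n(b^*)\|_{b^*}$ combined with the second (bounded) case of the Euclidean concentration lemma applied to $Y_i=\Log_{b^*}X_i$ with $K=2r$. The paper does not give a separate proof of this theorem beyond that framework, so your reconstruction is exactly what is intended; the mild mismatch in numerical constants you flag (a factor $4$ in front of $\sigma$ and a larger constant in front of $r$) is inherent in the paper's own setup and not a defect of your argument.
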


The bounds given in these two theorems look worse than the ones we obtained in a more general framework. Indeed, both dependences on $\alpha(\varepsilon,\kappa)$ (in the small $\varepsilon$ regime) and in $(\sigma^2,K^2)$ are deteriorated. However, the assumption we made on the distribution of $X_1$ in Theorem~\ref{thm:Riem_unbounded} is less stringent than, say, in Theorem~\ref{thm:mainthm}, and it is more standard. Indeed, we do not require that all Lipschitz functions of $X_1$ are sub-Gaussian, but only those of the form $\langle u,\Log_{b^*}X_1\rangle_{b^*}$ for $u\in T_{b^*}M$ (see Section~\ref{sec:subGauss} above). Moreover, the bound obtained in Theorem~\ref{thm:Riem_bounded} has a better dependence on $\alpha$ than Corollary~\ref{cor:Hoeffding} in the small $\alpha$ regime, that is, when $\kappa>0$ and $\varepsilon$ is very small.

\section*{Acknowledgements} This work was funded by the French Agence Nationale de la Recherche, as part of the project
ANR-22-ERCS-0015-01.

\bibliographystyle{alpha}
\bibliography{biblio}

\appendix

\section{Remaining proofs} \label{sec:Appendix}

\subsection{Proof of Theorem~\ref{thm:hetero1}} \label{sec:thmhetero1}

In what follows, denote by $\alpha=2$ if $\kappa\leq 0$ and $\alpha=\alpha(\varepsilon,\kappa)$ otherwise, where $\varepsilon>0$ is such that $C$ is contained in a ball of radius $(1/2)(D_\kappa/2-\varepsilon)$. For all $x\in M$, denote by $F(x)=\frac{1}{n}\sum_{i=1}^n\E[\dist(x,X_i)^2]$ and $F_n(x)=\frac{1}{n}\sum_{i=1}^n\dist(x,X_i)^2$. First, note that $F$ is $\alpha$-strongly convex in $C$. Moreover, for all $i=1,\ldots,n$ and all $x\in M$ with $x\neq b^*$, $\E[\dist(x,X_i)^2]>\E[\dist(b^*,X_i)^2]$ since $b^*$ is the unique barycenter of $X_i$, yielding that $F(x)\geq \frac{1}{n}\sum_{i=1}^n \E[\dist(b^*,X_i)^2]=F(b^*)$. Hence, $b^*$ is the unique minimizer of $F$. Recall also that $b^*\in C$ and $\hat b_n\in C$ almost surely, by Proposition~\ref{lem:varianceineq}. Hence, we can write that $\frac{\alpha}{2}\dist(\hat b_n,b^*)^2\leq F(\hat b_n)-F(b^*)$. 

Let $X_1',\ldots,X_n'$ be random variables in $M$ such that $X_1,\ldots,X_n,X_1',\ldots,X_n'$ are independent and $X_i$ has the same distribution as $X_i'$, for all $i=1,\ldots,n$. Independence of $(X_1,\ldots,X_n)$ and $X_i'$, for $i=1,\ldots,n$, yields that $\E[F(\hat b_n)]=n^{-1}\sum_{i=1}^n\E[\dist(\hat b_n,X_i')^2]$.

For $i=1,\ldots,n$, let $\hat b_n^{(i)}=\hat B_n(X_1,\ldots,X_{i-1},X_i',X_{i+1},\ldots,X_n)$. Then, for each $i=1,\ldots,n$, $\dist(\hat b_n,X_i')$ and $\dist(\hat b_n^{(i)},X_i)$ have the same distribution, yielding that 
\begin{equation} \label{eqn:exchang}
    \E[F(\hat b_n)]=\frac{1}{n}\sum_{i=1}^n\E[\dist(\hat b_n^{(i)},X_i)^2]
\end{equation}

Now, \eqref{eqn:1231} from the proof of Theorem~\ref{thm:llnempiricalbar} still holds and in the case where $\kappa\leq 0$, the same whole argument works again. Let us only focus on the case $\kappa>0$, which requires slightly more care. In that case, \eqref{eqn:12312} did not require the $X_i$'s to have the same distribution, so we can write again 
\begin{align}
    \frac{\alpha(\varepsilon,\kappa)}{2}\E[\dist(\hat b_n,b^*)^2] 
    & \leq \frac{2\pi}{n^3\alpha(\varepsilon,\kappa)}\sum_{i=1}^n \sum_{j=1}^n\E[\dist(X_i,X_i')\dist(X_i,X_j')] \nonumber \\
    & \leq \frac{2\pi}{n^3\alpha(\varepsilon,\kappa)}\sum_{i=1}^n \sum_{j=1}^n \E[\dist(X_i,X_i')^2]^{1/2}\E[\dist(X_i,X_j')^2]^{1/2} \nonumber \\
    & \leq \frac{8\pi}{n^3\alpha(\varepsilon,\kappa)}\sum_{i=1}^n \sum_{j=1}^n \sigma_i(2\sigma_i^2+2\sigma_j^2)^{1/2} \nonumber \\
    & \leq \frac{8\sqrt 2\pi}{n^3\alpha(\varepsilon,\kappa)}\sum_{i=1}^n\sum_{j=1}^n(\sigma_i^2+\sigma_i\sigma_j) \nonumber \\
    & \leq \frac{8\sqrt 2\pi}{n^3\alpha(\varepsilon,\kappa)}\sum_{i=1}^n\sum_{j=1}^n(\sigma_i^2+(1/2)\sigma_i^2+(1/2)\sigma_j^2) \nonumber \\
    & = \frac{16\sqrt 2\pi}{n^3\alpha(\varepsilon,\kappa)}\sum_{i=1}^n \sigma_i^2 \nonumber \\
    & = \frac{16\sqrt 2 \pi \bar\sigma_n^2}{n\alpha(\varepsilon,\kappa)} \nonumber 
\end{align}
where we used the Cauchy-Schwarz inequality in the third line and that 
$$\E[\dist(X_i,X_j')^2]\leq 2\E[\dist(X_i,b^*)^2+\dist(X_j',b^*)^2]=2\sigma_i^2+2\sigma_j^2$$ 
for all $i,j\in\{1,\ldots,n\}$ in the fourth line.

\subsection{Proof of Lemma~\ref{lemma:subG2}}\label{appendix:proofMCP}

The proof of this lemma makes use of Bishop-Gromov comparison theorem. Assume that $(M,\dist,\mu)$ is a metric measure space that satisfies the $(\underline\kappa,N)$-measure contraction property. First, by \citep[Theorem 4.2]{ohta2007measure} (Bonnet-Myers' theorem in the case of a Riemannian manifold), if $\underline\kappa>0$, then $M$ has finite diameter, bounded from above by $\pi\sqrt{(N-1)/\underline\kappa}$. Hence, $X$ is bounded and, by Lemma~\ref{lemma:Ledoux}, it is $K^2$-sub-Gaussian, with $K^2=4\pi^2/\underline\kappa$. In the rest of the proof, assume that $\underline\kappa\leq 0$. Moreover, we assume that $N\geq 2$ is an integer, for simplicity. Then, by \citep[Theorem 5.1]{ohta2007measure} (Bishop-Gromov's theorem in the case of a Riemannian manifold), for all $x_0\in M$ and for all $r\geq 0$, it holds that 
$$\mu(B(x_0,r))\leq V_{N,\underline\kappa}(r),$$
where $V_{N,\kappa}(r)$ is the volume of any ball of radius $r$ in the $N$-dimensional hyperbolic space of constant curvature $\underline\kappa$ (which we identify with $\R^N$ is $\kappa=0$).

It is known \citep[Section III]{chavel2006riemannian} that 
$$V_{N,\kappa}(r)=c_{N-1}\int_0^r \left(\frac{\sinh(\sqrt{-\underline\kappa}t)}{\sqrt{-\underline\kappa}}\right)^{N-1}\diff t$$
where $c_{N-1}=\frac{2\pi^{N/2}}{\Gamma(N/2)}$ and where the integral should be understood as $r^N/N$ if $\underline\kappa=0$. If $\underline\kappa<0$, we readily obtain the inequality
$$V_{N,\underline\kappa}(r)=\frac{c_{N-1}e^{(N-1)\sqrt{-\underline\kappa}r}}{(N-1)(-\underline\kappa)^{N/2}}.$$

Now, let us show that for all $\alpha>0$, $I(\alpha):=\int_M e^{-\alpha d(x,x_0)^2}\diff\mu(x)$ is finite. This will be the key of the proof. 

For any choice of $c>0$,
\begin{align}
	I(\alpha) & = \sum_{r=0}^\infty \int_{B(x_0,c(r+1))\setminus B(x_0,cr)} e^{-\alpha d(x,x_0)^2}\diff\mu(x) \nonumber \\
	& \leq \sum_{r=0}^\infty e^{-\alpha c^2r^2}V_{N,\underline\kappa}(c(r+1)). \label{eqn:intsum}
\end{align}

For simplicity, let us distinguish the two cases when $\underline\kappa=0$ or $\underline\kappa<0$. First, assume $\underline\kappa=0$. Then, \eqref{eqn:intsum} with $c=1/\sqrt\alpha$ yields 
$$I(\alpha) \leq \frac{c_{N-1}}{(N-1)\alpha^{(N-1)/2}}\sum_{r=0}^\infty e^{-r^2}(r+1)^N<\infty.$$

Now, let us assume that $\underline\kappa<0$. Then, \eqref{eqn:intsum} with $c=1/\sqrt\alpha$ again yields

\begin{align}
	I(\alpha) & \leq \frac{c_{N-1}}{(N-1)(-\underline{\kappa})^{N/2}}\sum_{r=0}^\infty e^{-r^2}e^{(N-1)\sqrt{-\underline{\kappa}}(r+1)/\sqrt{\alpha}} \nonumber \\
	& = \frac{c_{N-1}e^{(N-1)\sqrt{-\underline{\kappa}}+\sqrt\alpha}}{(N-1)(-\underline{\kappa})^{N/2}}\sum_{r=0}^\infty e^{-r^2}e^{(N-1)\sqrt{-\underline{\kappa}}r/\sqrt\alpha} \nonumber \\
	& = \frac{c_{N-1}e^{(N-1)\sqrt{-\underline{\kappa}/\alpha}-\frac{\underline{\kappa} (N-1)^2}{\alpha}}}{(N-1)(-\underline{\kappa})^{N/2}}\sum_{r=0}^\infty e^{-\left(r-\frac{(N-1)\sqrt{-\underline{\kappa}}}{2\sqrt\alpha}\right)^2}. \nonumber
\end{align}

Now, using the inequality $\DS \sum_{r=0}^\infty e^{-(r-m)^2} \leq 5$, for any $m>0$, we obtain that 
$$I(\alpha)\leq \frac{5c_{N-1}e^{(N-1)\sqrt{-\underline{\kappa}/\alpha}-\frac{\underline{\kappa} (N-1)^2}{\alpha}}}{(N-1)(-\underline{\kappa})^{N/2}}<\infty.$$

We are now ready to prove Lemma~\ref{lemma:subG2}. By the last part of Lemma~\ref{lemmclassiquesousGauss}, it suffices to show that for sufficiently large $K>0$, it holds that $\DS \E[e^{\frac{(f(X)-\E[f(X)])^2}{2K^2}}] \leq 2$ for all $f\in\mathcal F$.

Fix $f\in\mathcal F$ and $K_0>1/\sqrt\beta$. By Jensen's inequality, $\DS \E[e^{\frac{(f(X)-\E[f(X)])^2}{2K_0^2}}] \leq \E[e^{\frac{(f(X)-f(Y))^2}{2K_0^2}}]$, where $Y$ is an independent copy of $X$. Therefore, 
\begin{align*}
	\E[e^{\frac{(f(X)-\E[f(X)])^2}{2K_0^2}}] & \leq \E[e^{\frac{d(X,Y)^2}{2K_0^2}}] \leq \E\left[e^{\frac{d(X,x_0)^2+d(Y,x_0)^2}{K_0^2}}\right] = \E\left[e^{\frac{d(X,x_0)^2}{K_0^2}}\right]^2 \\
	& =C^2I(\beta-1/K_0^2)^2 =: J
\end{align*}
which is finite as long as $K>1/\sqrt\beta$. By Hölder's inequality, it holds that for all $f\in\mathcal F$ and $K\geq K_0$, 
\begin{align*}
    \E\left[e^{\frac{(f(X)-\E[f(X)])^2}{2K^2}}\right] & \leq \left(\E\left[e^{\frac{(f(X)-\E[f(X)])^2}{2K_0^2}}\right]\right)^{K_0^2/K^2} \\
    & \leq J^{K_0^2/K^2}
\end{align*}
which goes to $1$ as $K\to\infty$. Therefore, for sufficiently large $K$ (independently of the choice of $f\in\mathcal F$, $\E[e^{\frac{(f(X)-\E[f(X)])^2}{2K^2}}]\leq 2$.


\end{document}